\documentclass[reqno,a4paper]{amsart}
\usepackage{eucal,amsfonts,amssymb,amsmath,amsthm,epsfig,mathrsfs}

\usepackage{amscd,amsxtra}
\usepackage{enumerate}
\usepackage{latexsym}

\allowdisplaybreaks

 \makeatletter \@addtoreset{equation}{section}

\makeatother \makeatletter

\newtheorem{thm}{Theorem}[section]
\newtheorem{hyp}[thm]{Hypotheses}{\rm}
{\rm}
\newtheorem{lemm}[thm]{Lemma}
\newtheorem{coro}[thm]{Corollary}
\newtheorem{prop}[thm]{Proposition}
\newtheorem{defi}[thm]{Definition}
\newtheorem{rmk}[thm]{Remark}{\rm}

\newcommand{\R}{{\mathbb R}}
\newcommand{\N}{{\mathbb N}}

\newcommand{\Rd}{\mathbb R^d}

\newcommand{\bd}{\begin{defi}}
\newcommand{\ed}{\end{defi}}

\newcommand{\be}{\begin{equation}}
\newcommand{\ee}{\end{equation}}
\newcommand{\barr}{\begin{array}}
\newcommand{\earr}{\end{array}}
\newcommand{\bmn}{\begin{eqnarray}}
\newcommand{\emn}{\end{eqnarray}}
\newcommand{\bnm}{\begin{eqnarray*}}
\newcommand{\enm}{\end{eqnarray*}}
\newcommand{\bln}{\begin{subequations}}
\newcommand{\eln}{\end{subequations}}

\newcommand{\ba}{\begin{align}}
\newcommand{\ea}{\end{align}}
\newcommand{\banm}{\begin{align*}}
\newcommand{\eanm}{\end{align*}}
\newcommand{\one}{\mbox{$1\!\!\!\;\mathrm{l}$}}

\title[Hypercontractivity and asymptotic behaviour]{Hypercontractivity and asymptotic behaviour in nonautonomous Kolmogorov equations}
\author{L. Angiuli, L. Lorenzi and A. Lunardi}
\address{Dipartimento di Matematica, Universit\`a degli Studi di Parma, Parco Area delle Scienze 53/A, I-43124 Parma, Italy.}
\email{(luciana.angiuli, luca.lorenzi, alessandra.lunardi)@unipr.it}
\keywords{nonautonomous second order elliptic
operators, unbounded coefficients, evolution operators, logarithmic Sobolev inequality, hypercontractivity, asymptotic behavior}
\subjclass[2000]{35K10, 35K15, 35B40}

\date{\today}
\begin{document}

\begin{abstract}
We consider a class of nonautonomous second order parabolic equations with unbounded coefficients defined in $I\times\R^d$, where $I$ is a right-halfline. We prove
logarithmic Sobolev and Poincar\'e inequalities with respect to an associated evolution system of measures
$\{\mu_t: t \in I\}$,  and we deduce hypercontractivity and asymptotic behaviour
results for the evolution operator $G(t,s)$. \end{abstract}

\maketitle
\section{Introduction}

We consider   nonautonomous Cauchy problems,
\begin{equation}\label{p_e}
\left\{
\begin{array}{ll}
D_tu(t,x)={\mathcal{A}}(t)u(t,x),\quad\quad & (t,x)\in (s,+\infty)\times \Rd,\\[1mm]
u(s,x)= f(x),\quad\quad & x\in \Rd,
\end{array}\right.
\end{equation}
where $\{\mathcal{A}(t)\}_{t\in I}$ is a family of second order differential operators,
\begin{equation}\label{a(t)}
(\mathcal{A}(t)\zeta)(x)= {\textrm{Tr}}(Q(t)D^2\zeta(x))+ \langle b(t,x), \nabla \zeta(x)\rangle,
\end{equation}
with smooth enough coefficients $Q=[q_{ij}]_{i, j=1, \ldots, d}$ and $b = (b_1, \ldots, b_d)$, defined in $I$ and $I\times\Rd$, respectively, where $I$ is an open right halfline and $s\in  I$.
Throughout the paper we assume that the coefficients $q_{ij}$ are bounded and that
the operators $\mathcal{A}(t)$ are uniformly elliptic, i.e., there exists a positive constant $\eta_0$ such that
\begin{equation}
\label{elliptic}
\langle Q(t)\xi,\xi\rangle\geq\eta_0|\xi|^2,\qquad\;\, \xi\in \R^d,\;\, t\in I.
\end{equation}
Problem \eqref{p_e} arises (after time reversal) as a Kolmogorov equation of the stochastic differential equation
\begin{eqnarray*}
dX(t,s, x) = b(t, X(t,s,x))dt + \sigma(t)dW_t, \quad X(s, s,x)=x,
\end{eqnarray*}
where $W_t$ is a standard $d$-dimensional Brownian motion, and $Q(t) = \sigma(t) (\sigma(t))^*/2$. There is no need that $b$ be bounded to have existence in the large of a strong solution for every $x\in \R^d$ and to define the transition evolution operator $f\mapsto {\mathbb E}(f(X(t,s,x)))$ that leads to \eqref{p_e}, see e.g., \cite{Cerrai, gikhman}.

It is well known that the usual $L^p$ spaces with respect  to the Lebesgue measure $dx$ are not a natural setting for elliptic and parabolic operators with unbounded coefficients, unless quite strong growth assumptions are imposed on their coefficients.
For instance, if $\varepsilon >0$  no realization of the operator  $(\mathcal{A}\zeta)(x)=\zeta''(x)\pm {\rm sign}(x)|x|^{1+\varepsilon}\zeta'(x)$  in   $L^p(\R, dx)$  generates a strongly continuous semigroup,  as it has been shown in \cite{abdel}.
Much better settings are $L^p$ spaces with respect to the so called evolution systems of  measures $\{\mu_t: t \in I\}$.
An evolution system of measures $\{\mu_t: t \in I\}$ for a Markov evolution operator $G(t,s)$ is a family of Borel probability measures in $\Rd$ satisfying
\begin{equation}
\int_{\Rd}G(t,s)f \mu_t(dx)= \int_{\Rd} f \mu_s(dx), \qquad\;\, t>s \in I,\;\,f\in C_b(\Rd).
\label{invariant-0}
\end{equation}

As noticed e.g., in \cite{DPR}, the family $\{\mu_t: t\in I\}$ is the natural nonautonomous counterpart of the invariant measure for a Markov semigroup in the autonomous case.
If an evolution system of measures exists, formula \eqref{invariant-0} allows easily to prove that for $p\geq 1$ we have
\begin{equation}
\label{contr}
\|G(t,s)f \|_{L^p(\R^d, \mu_t)} \leq \| f \|_{L^p(\R^d, \mu_s)}, \qquad\;\, t\geq s, \;f\in C_b(\R^d),
\end{equation}
and consequently  $G(t,s)$ may be extended to a contraction  (still denoted by   $G(t,s)$) from $L^p(\Rd,\mu_s)$ into $L^p(\Rd,\mu_t)$ for any $t>s$.
However, in general  the spaces $L^p(\Rd,\mu_s)$ and $L^p(\Rd,\mu_t)$ are different if $t\neq s$, and  the classical theory of evolution operators in fixed Banach spaces cannot be used.

Under mild assumptions on $Q$ and $b$, in \cite{KunLorLun09Non}  the existence of a Markov evolution operator $G(t,s)$ associated to the family  $\{\mathcal{A}(t)\}_{t\in I}$, as well as the existence of a tight evolution system of measures $\{\mu_t:\;t\in I\}$, have been proved.
In this paper we study  the asymptotic behavior of  $G(t,s)$ as $t\to +\infty$, and we prove hypercontractivity results in the spaces $L^p(\R^d, \mu_t)$.

In addition to the basic hypotheses of \cite{KunLorLun09Non}, we assume that the quadratic form associated to the Jacobian matrix $\nabla_xb$ is uniformly negative definite, namely there exists $r_0<0$ such that
\begin{eqnarray*}
\langle \nabla_xb(t,x)\xi, \xi \rangle \leq r_0|\xi|^2, \qquad\;\, t\in I, \;\,x,\xi \in \R^d.
\end{eqnarray*}
This is a strong assumption that yields strong qualitative results, such as the pointwise gradient estimate
\begin{equation}
\label{p=1}
|(\nabla_x G(t,s)f)(x)|\leq e^{r_0(t-s)}(G(t,s)|\nabla f|)(x),
\end{equation}
valid  for every $f \in C^1_b(\Rd)$, $t\geq s$, $x \in \Rd$, and its consequence
\begin{equation}
\label{comp_as}
\Vert\,|\nabla_x G(t,s)f|\,\Vert_{L^p(\Rd,\mu_t)}\leq c_pe^{r_0(t-s)} \Vert f\Vert_{L^p(\Rd,\mu_s)},\qquad\;\, t\geq s+1, \;\,f\in L^p(\Rd,\mu_s),
\end{equation}
 see  \cite{KunLorLun09Non}.
The starting point of our analysis is the proof of
the logarithmic Sobolev inequality (in short LSI) for the  measures $ \mu_t $, in the form
\begin{align}
\int_{\Rd} |f|^p\log |f|\,\mu_t(dx)\le &\frac{1}{p}\left (\int_{\Rd}|f|^p\mu_t(dx)\right )\log\left (\int_{\Rd}|f|^p\mu_t(dx)\right )\notag\\
&+ pC\int_{\Rd}|f|^{p-2}|\nabla f|^2 \chi_{\{f\neq 0\}}\mu_t(dx),
\label{log_intro}
\end{align}
for any $t\in I$, any $p\in (1,+\infty)$ and some positive constant $C$, independent of $f\in C^1_b(\R^d)$, $t$ and  $p$. The  gradient estimate \eqref{p=1} allows us to follow the method used by Deuschel and Stroock \cite{DS}  in the autonomous case for the invariant measure $\mu$ of a Markov semigroup, but the proof is   much more complicated because the measures $\mu_t$ depend explicitly on time. In particular, we have to deal with the regularity of $\mu_t$ with respect to $t$. We use in a crucial way a differentiability property, 
\begin{equation}\label{thanks}
\frac{d}{dt}\int_{\Rd}f(x) \mu_t(dx)= -\int_{\Rd} (\mathcal{A}(t)f)(x)
\mu_t(dx),\qquad\;\, t \in I,
\end{equation}
valid for every $f\in C^2_b(\Rd)$, constant outside a compact set.

Under Hypotheses \ref{hyp1}, the operator $G(t,s)$ is bounded from $L^q(\Rd,\mu_s)$ into $W^{1,q}(\Rd,\mu_t)$ for    $I\ni s<t$,  $q\in (1,+\infty)$  (\cite{KunLorLun09Non}).
The question whether it is bounded (or, even better, contractive) from $L^q(\Rd,\mu_s)$ into $L^r(\Rd,\mu_t)$ for some  $r>q$,  is particularly meaningful. Indeed,
Sobolev embedding theorems do not hold in  general, as the simple example of the standard Gaussian measure in $\R$ shows, hence it  is not obvious that $G(t,s)$ improves summability.

We prove that in fact this is the case. We follow the method of  \cite{GeisLun09} that deals with time depending Ornstein-Uhlenbeck operators,
\begin{equation}\label{Ou_intro}
(\mathcal{A}(t)\zeta)(x)= \frac{1}{2}\textrm{Tr}(Q(t)(Q(t))^*D^2\zeta(x))
+\langle B(t)x+f(t), \nabla\zeta(x) \rangle,\qquad\;\,t\in\R,\;\,x \in \Rd,
\end{equation}
and that is, in its turn,  an extension of the method of Gross (\cite{Gro75Log}) to a nonautonomous setting, where the LSI \eqref{log_intro} plays a fundamental role. However,  in \cite{GeisLun09}   there are an explicit representation formula for the evolution operator and explicit representation formulae for the measures $\mu_t$, that are used in the proof of the LSI and  of the hypercontractivity. On the contrary, in our case $G(t,s)$ and $\mu_t$ are not explicit.

Another important consequence of \eqref{log_intro} is the Poincar\'e
inequality
\begin{equation}
\|f-m_s(f)\|_{L^p(\Rd,\mu_s)}\le C_p\|\,|\nabla f|\,\|_{L^p(\Rd,\mu_s)},\qquad\;\,f\in W^{1,p}(\Rd,\mu_s),\;\,s\in I,
\label{Poi-intro}
\end{equation}
where $m_s(f) = \int_{\R^d} f d\mu_s$, and $C_p$ is a positive constant, independent
of $f$ and $s$.
First   \eqref{Poi-intro} is proved for $p=2$, then, by a bootstrap argument, we extend it to   $p>2$.

Using the Poincar\'e inequality with $p=2$ and the hypercontractivity of   $G(t,s)$, we compare the asymptotic behavior (as $t\to +\infty$) of  $\Vert G(t,s)f -m_sf \Vert_{ L^p(\Rd,\mu_t)}$ and $\Vert\,|\nabla_x G(t,s)f |\,\Vert_{L^p(\Rd,\mu_t)}$. Precisely, we prove the   equality $\mathfrak{A}_p=\mathfrak{B}_p$, for any $p\in (1,+\infty)$, where
\begin{align*}
\mathfrak{A}_p=&\big\{\omega\in\R: \exists M_{p,\omega}>0~{\rm s.t.}~\|G(t,s)f-m_s(f)\|_{L^p(\R^d,\mu_t)}\le M_{p,\omega}e^{\omega (t-s)}\|f\|_{L^p(\R^d,\mu_s)},\\
&~~~\;I\ni s<t,\;\,f\in L^p(\R^d,\mu_s)\big\};\\[1mm]
\mathfrak{B}_p=&\big\{\omega\in\R: \exists N_{p,\omega}>0~{\rm s.t.}~\|\,|\nabla_x G(t,s)f|\,\|_{L^p(\R^d,\mu_t)}\le N_{p,\omega}e^{\omega ( t-s)}\|f\|_{L^p(\R^d,\mu_s)},\\
&~~~\;s,t\in I,\, t-s\ge 1,\;\,f\in L^p(\R^d,\mu_s) \big\}.
\end{align*}
We also show that $\mathfrak{A}_p$ is independent of $p$. Then, estimate \eqref{comp_as} implies that $r_0\in \mathfrak{B}_p$, and therefore  $\Vert G(t,s)-m_s\Vert_{{\mathcal L}(L^p(\Rd,\mu_s), L^p(\Rd,\mu_t))}$ decays exponentially to zero, as $t\to +\infty$.

In the case of the nonautonomous Ornstein-Uhlenbeck operators \eqref{Ou_intro} we prove the conjecture in \cite{GeisLun09} on the optimal decay estimate of   $\|G(t,s)f-m_s(f)\|_{L^2(\R^d,\mu_t)}$ as $t\to +\infty$ and we show that the same optimal decay estimate holds also replacing
$L^2(\Rd,\mu_t)$  by $L^p(\Rd,\mu_t)$ for any $p>1$. See Subsection \ref{sect-appl}.

The   equality $\mathfrak{A}_p=\mathfrak{B}_p$ was already   proved in \cite{LorLunZam10} in the case that  the coefficients $q_{ij}$, $b_i$ are
periodic with respect to   $t$, under more restrictive assumptions   and only for $p\ge 2$.

Since most of our asymptotic behaviour results are expressed in terms of the measures $\mu_t$, the asymptotic behaviour of $\mu_t$ as $t\to +\infty$ is also of interest. The explicit determination of all the weak$^*$ limit measures of $\mu_t$ as $t\to +\infty$ is out of hope in general. Here, we consider the case where the coefficients $q_{ij}$ and $b_j$ ($i,j=1,\ldots,d$) converge as $t\to +\infty$, and we prove that $\mu_t$ weakly$^*$ converges to the invariant measure $\mu$ of the semigroup generated by the limiting operator.

Differently from \cite{GeisLun09} and \cite{LorLunZam10} all the results of this paper are proved without using the evolution semigroup associated to the evolution family $G(t,s)$.

Our results heavily rely on the LSI \eqref{log_intro} which is proved using the
pointwise gradient estimate \eqref{p=1}. Even  in the autonomous case \eqref{p=1} does not hold when the diffusion coefficients depend on $x$ and they do not satisfy the condition in \cite{wang}. This is the reason why we consider  diffusion coefficients depending only on $t$.

The paper is organized as follows. In Section \ref{sect-2} we introduce our hypotheses and we collect some  preliminary results.
Section \ref{sec_log_sob} is devoted to establish the logarithmic Sobolev inequality and, as a consequence,
the compactness of the embedding $W^{1,p}(\Rd,\mu_s)\hookrightarrow L^p(\Rd,\mu_s)$, for any $p\ge 2$ and any $s\in I$,
and the compactness of the operator $G(t,s)$ from $L^p(\Rd,\mu_s)$ into $L^p(\Rd,\mu_t)$ for any $t>s$ and any $p\in (1,+\infty)$.
Next, in Section \ref{sec_hyper} we prove the hypercontractivity of $G(t,s)$.
In Section \ref{sec_poincare} we establish the Poincar\'e inequality for every $p\ge 2$, we prove the exponential convergence of $G(t,s)$ to $m_s$ in  ${\mathcal L}(L^p(\Rd,\mu_s), L^p(\Rd,\mu_t))$ and we characterize it in terms of the convergence rate to zero of  $\nabla_x G(t,s) $.
In Section \ref{sect-asympt-meas} we study the asymptotic behaviour of $\mu_t$ when the coefficients converge as $t\to +\infty$.
Finally, in Section \ref{examples} we briefly comment on our hypotheses and give examples of nonautonomous operators to which the results of this paper may be applied.

\subsection*{Notations}
Let $\Omega$ be an open set or the closure of an open set in $\R^N$, and let $k\in \N \cup \{+ \infty\}$. We consider the usual spaces $C(\Omega)$ and $C^k(\Omega)$, as well as $C^k_b(\Omega)$, the subspace of $C^k(\Omega)$ consisting of
bounded functions with bounded derivatives up to the $k$-th order. For $\alpha\in (0,1)$, $C^{ \alpha}(\Omega)$ is the usual H\"older space; we use the subscript ``loc'' to denote the space of all $f\in C(\Omega)$ which are $\alpha$-H\"older continuous in any compact subset of $\Omega$.
We use the subscript ``$c$'' (resp. ``$0$'') instead of ``$b$''  for the subsets of the above spaces consisting of functions  with compact support (resp.  vanishing at infinity).

If $J \subset \R$ is an interval,  the parabolic H\"older spaces $C^{\alpha/2,\alpha}(J \times\Rd)$ ($\alpha\in (0,1)$) and $C^{1,2}(J \times\Rd)$ are defined in the usual way;
 the subscript ``loc'' has the same meaning as above.

About partial derivatives, the notations $D_tf:=\frac{\partial f}{\partial t}$,
$D_if:=\frac{\partial f}{\partial x_i}$, $D_{ij}f:=\frac{\partial^2f}{\partial x_i\partial x_j}$ are extensively used.

About matrices and vectors, we denote by $\textrm{det}(Q)$, $\textrm{Tr}(Q)$ and $\langle x,y\rangle$ the determinant, the trace of the square matrix
$Q$ and the  scalar product
of the vectors $x,y\in\Rd$, respectively. The adjoint  of $Q$ is denoted by $Q^*$.

By $\chi_A$ and $\one$ we denote, respectively, the characteristic function of the set $A\subset\Rd$ and the function which is identically
equal to 1 in $\Rd$. The  ball in $\R^d$ centered at $0$ with  radius $r>0$ is denoted by $B(0,r)$. The Lebesgue measure in $\R^d$ is denoted by $dx$.

\section{Assumptions and preliminary results}
\label{sect-2}

Let $I$ be an open right  halfline. For $t\in I$ we consider
linear second order differential
operators $ \mathcal{A}(t) $
defined on smooth functions $\zeta$ by
\begin{align*}
(\mathcal{A}(t)\zeta)(x)&=\sum_{i,j=1}^d q_{ij}(t)D_{ij}\zeta(x)+
\sum_{i=1}^d b_i(t,x)D_i\zeta(x)\\
&= \textrm{Tr}(Q(t)D^2\zeta(x))+ \langle b(t,x), \nabla \zeta(x)\rangle,\qquad\;\, x\in\R^d,
\end{align*}
under the following assumptions on their coefficients.

\begin{hyp}\label{hyp1}
\begin{enumerate}[\rm (i)]
\item
$q_{ij}\in C^{\alpha/2}_{\rm loc}(I)$ and
$b_{i}\in C^{\alpha/2,\alpha}_{\rm loc}(I\times \R^d)$ $(i,j=1,\dots,d)$ for
some $\alpha \in (0,1)$;
\item
for every $t\in I$, the matrix $Q(t)=[q_{ij}(t)]_{i,j=1, \ldots, d}$
is symmetric and there exist $0<\eta_0<\Lambda$ such that
\begin{equation}
\label{ell}
\eta_0|\xi|^2\le \langle Q(t)\xi,\xi\rangle\leq \Lambda|\xi|^2,\qquad\;\, (t,\xi )\in I\times \Rd;
\end{equation}
\item
there exists   $\varphi\in C^2(\R^d)$ with positive values such that
\begin{equation}
\label{Lyapunov}
\;\;\;\;\;\qquad\lim_{|x|\to +\infty}\varphi(x)=+\infty \quad\textrm{and}\quad
(\mathcal{A}(t)\varphi)(x)\leq a-c\,\varphi(x),  \quad (t,x)\in I\times \Rd
\end{equation}
for some positive constants $a$ and $c$;
\item
the first order spatial derivatives of $b_i$ exist, belong to
$C^{\alpha/2,\alpha}_{\rm loc}(I\times \Rd)$ for any $i=1, \dots,d$, and
there exists  $r_0<0$ such that
\begin{equation}
\label{b}
\langle \nabla_x b(t,x)\xi,\xi\rangle\leq r_0|\xi|^2,\qquad\;\, (t,x)\in I\times \Rd,\;\, \xi \in \Rd.
\end{equation}
\end{enumerate}
\end{hyp}

Throughout the paper we assume that all the conditions in Hypotheses \ref{hyp1} are satisfied, if not otherwise specified.
Let us make some comments.

\begin{rmk}
{\rm As easily seen, condition \eqref{b} is equivalent to
\begin{equation}
\label{monotonia}
\langle b(t,x)- b(t,y),x-y\rangle\le  r_0|x-y|^2 , \qquad\;\, t\in I, \;\,x,y\in \R^d.
\end{equation}
Then:}
\begin{itemize}
\item [(a)] {\rm taking $y=0$ we get $\langle b(t,x),x \rangle\le  \langle b(t,0),x \rangle
+r_0|x |^2$, and since $r_0<0$, for
any $[a,b]\subset I$ there exists a positive constant $C_{a,b}$ such that}
\begin{equation}
\langle b(t,x),x\rangle\le C_{a,b},\qquad\;\,t\in [a,b],\;\,x\in\Rd.
\label{inequality-b}
\end{equation}
{\rm This estimate will be used later, in the proof of the LSI inequality and of the hypercontractivity.}
\item[(b)] {\rm If  $b(\cdot , \overline{y})$ is bounded in $I$ for some $\overline{y}\in \R^d$, the function
$\varphi(x):= e^{\delta |x-\overline{y}|^2}$ satisfies Hypothesis \ref{hyp1}(iii)
 if $\delta >0$ is small enough. Similarly, if $\langle b(t, x), x\rangle \leq -C|x|^{\beta}$ for $|x|$ large,
with $C>0$, $\beta >1$ independent of $t$ and $x$, then the function $\varphi(x):= e^{\delta |x |^{\beta}}$ satisfies Hypothesis \ref{hyp1}(iii)
if $\delta >0$ is small enough. See Section \ref{examples} for more details. }
 \end{itemize}
\end{rmk}

Under Hypotheses \ref{hyp1}(i)-(iii), in \cite{KunLorLun09Non} a Markov evolution operator $G(t,s)$   associated to \eqref{p_e} has been constructed. Here we recall its main properties.

For every continuous and bounded function $f:\Rd\to\R$ and for any $s \in I$, the function $(t,x)\mapsto (G(t,s)f)(x)$
is the unique bounded classical solution to the Cauchy problem
\begin{eqnarray*}
\left\{
\begin{array}{lll}
D_tu(t,x)=\mathcal{A}(t)u(t,x), &\quad t>s,  &x \in \Rd,\\[1mm]
u(s,x)=f(x),&& x \in \Rd.
\end{array}
\right.
\end{eqnarray*}
Then,  $G(\cdot,s)f\in C_b([s,+\infty)\times \Rd)\cap C^{1,2}((s,+\infty)\times\Rd)$.
Moreover,
\begin{equation}\label{rep_nucleo}
(G(t,s)f)(x)=\int_{\R^d}g(t,s,x,y)f(y)dy,\qquad\;\,s<t,\;\,x\in\R^d,
\end{equation}
where $g:\{(t,s)\in I\times I: t> s\}\times\R^d\times\R^d\to\R$ is a positive function
such that $\|g(t,s,x,\cdot)\|_{L^1(\Rd)}=1$ for any $t,s\in I$, with $t>s$, and any $x\in\Rd$
(\cite[Prop. 2.4]{KunLorLun09Non}).
%
%

By \cite[Thm. 5.4]{KunLorLun09Non}
there exists an evolution system of measures $\{\mu_t: t\in I\}$ for $G(t,s)$. The Lyapunov function $\varphi$ is in $L^1(\R^d, \mu_t)$ for every $t\in I$, and there exists a constant $M>0$ such that
\begin{equation}
\int_{\Rd} \varphi(y)\mu_t(dy)\leq M, \qquad\;\, t \in I.
\label{const-M}
\end{equation}
This implies that the family of measures $\{\mu_t: t\in I\}$ is {\emph{tight}},
that is for every  $\varepsilon >0$ there exists $R=R_{\varepsilon}>0$ such that
$\mu_t(\Rd\setminus B(0, R))\leq \varepsilon$ for any $t\in I$.

Moreover, \eqref{rep_nucleo} implies $|(G(t,s)f)(x)|^p\leq (  G(t,s)|f|^p)(x)$ for every $f \in C_b(\Rd)$, $t>s\in I$, $x\in \R^d$ and $p\geq 1$. Integrating with respect to $\mu_t$ and using
\eqref{invariant-0} we obtain
\begin{equation}
\label{contraz}
\|G(t,s)f\|_{L^p(\R^d, \mu_t)}  \leq \| f\|_{L^p(\R^d, \mu_s)} , \qquad\;\, t>s\in I,
\end{equation}
and since $C_b(\Rd)$ is dense in $L^p(\R^d, \mu_s)$, $G(t,s)$ may be extended to
a contraction (still denoted by $G(t,s)$)  from  $L^p(\R^d, \mu_s)$ to $L^p(\R^d, \mu_t)$, such that  \eqref{invariant-0} holds for every  $f\in L^p(\R^d, \mu_s)$.

If also Hypothesis \ref{hyp1}(iv) holds, then $\{\mu_t:\;t\in I\}$ is the {\em unique} tight evolution system of measures. See next Remark \ref{rem:tight}. Moreover, in this case the following results have been proved in  \cite[Thm. 4.5]{KunLorLun09Non} and \cite[(proof of) Prop. 3.3]{LorZam09}.

\begin{prop}
\begin{enumerate}[\rm (i)]
\item
The pointwise gradient estimate
\begin{equation}\label{grad_est_punt}
|(\nabla_x G(t,s)f)(x)|^p\leq e^{pr_0(t-s)}(G(t,s)|\nabla f|^p)(x),
\end{equation}
holds for every $f \in C^1_b(\Rd)$, $t\geq s$, $x \in \Rd$ and $p\in [1,+\infty)$.
\item
For each $p\in (1,+\infty)$ and  $f\in L^p(\R^d,\mu_s)$, the function $G(t,s)f$ belongs to $W^{1,p}(\R^d,\mu_t)$ and
there exists a   constant $c_p$, independent of $f$, such that
\begin{equation}\label{grad_est_norm_p}
\;\;\;\;\;\;\;\;\;\,\Vert\,|\nabla_x G(t,s)f|\,\Vert_{L^p(\Rd,\mu_t)}\leq c_p(t-s)^{-1/2}\Vert f\Vert_{L^p(\Rd,\mu_s)},\qquad\;\, s< t \le s+1.
\end{equation}
\end{enumerate}
\end{prop}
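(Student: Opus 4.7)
My plan is to prove (i) via Bernstein's method applied to a regularised version of $|\nabla_x G(t,s)f|^p$, and to prove (ii) via a Bakry--Emery type identity that controls $|\nabla_x G(t,s)f|^2$ in terms of $G(t,s)(f^2)$, subsequently elevated to general $p \in (1,+\infty)$ using (i).

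For (i), set $u(\cdot,\cdot) := G(\cdot,s)f$ with $f \in C^1_b(\R^d)$; then $u \in C^{1,2}((s,+\infty) \times \R^d)$ solves $D_t u = \mathcal{A}(t) u$. For $\varepsilon > 0$, I consider
\[
V_\varepsilon(t,x) := e^{-pr_0(t-s)}(\varepsilon + |\nabla u(t,x)|^2)^{p/2}.
\]
Differentiating $u$ in space and using the PDE yields, after the chain rule, an expression for $(D_t - \mathcal{A}(t))V_\varepsilon$ containing: (a) a term proportional to $\langle \nabla_x b(t,x)\nabla u, \nabla u\rangle$ which Hypothesis \ref{hyp1}(iv) bounds by $r_0|\nabla u|^2$, cancelling the exponential factor up to an $O(\varepsilon)$ error; and (b) two Hessian-type terms involving $\mathrm{Tr}(Q(t)(D^2u)^2)$ and $\langle Q(t)\nabla|\nabla u|^2,\nabla|\nabla u|^2\rangle$, whose combined sign is non-positive thanks to \eqref{ell} (manifestly so for $p \geq 2$; via a Cauchy--Schwarz absorption for $p \in (1,2)$). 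One concludes $(D_t - \mathcal{A}(t))V_\varepsilon \leq O(\varepsilon)$, and the parabolic maximum principle on $\R^d$, whose validity in the unbounded coefficient setting is ensured by the Lyapunov function $\varphi$ of Hypothesis \ref{hyp1}(iii), gives $V_\varepsilon(t,x) \leq G(t,s)\big((\varepsilon+|\nabla f|^2)^{p/2}\big)(x) + O(\varepsilon)$. Letting $\varepsilon \to 0^+$ yields \eqref{grad_est_punt} for $p > 1$; the case $p = 1$ follows by passing $p \to 1^+$.

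For (ii) with $p = 2$, set $F(r) := G(t,r)\big((G(r,s)f)^2\big)$ for $s \leq r \leq t$. Differentiating in $r$ and using $\partial_r G(t,r) = -G(t,r)\mathcal{A}(r)$ together with the carré-du-champ identity $\mathcal{A}(r)(u^2) = 2u\mathcal{A}(r)u + 2\langle Q(r)\nabla u, \nabla u\rangle$ yields $F'(r) = -2G(t,r)\langle Q(r)\nabla G(r,s)f, \nabla G(r,s)f\rangle$. Integration from $s$ to $t$ combined with \eqref{ell} gives
\[
G(t,s)(f^2) - (G(t,s)f)^2 \geq 2\eta_0 \int_s^t G(t,r)|\nabla G(r,s)f|^2\,dr.
\]
Now \eqref{grad_est_punt} with $p = 2$ applied to the composition $G(t,s)f = G(t,r)\circ G(r,s)f$, together with $r_0 < 0$, yields $|\nabla_x G(t,s)f|^2 \leq G(t,r)|\nabla G(r,s)f|^2$ pointwise for every $r \in [s,t]$, so integrating in $r$ produces the pointwise bound
\[
(t-s)|\nabla_x G(t,s)f|^2 \leq \frac{1}{2\eta_0}G(t,s)(f^2).
\]
Integration against $\mu_t$ and the invariance property \eqref{invariant-0} give \eqref{grad_est_norm_p} for $p = 2$. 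For $p > 2$, I raise the pointwise $p = 2$ bound to the power $p/2$ and apply Jensen's inequality (valid since $G(t,s)$ is probability-preserving and $s \mapsto s^{p/2}$ is convex) to obtain the pointwise bound $|\nabla_x G(t,s)f|^p \leq (2\eta_0(t-s))^{-p/2}G(t,s)(|f|^p)$, and integrate against $\mu_t$. For $p \in (1,2)$, the estimate is obtained by duality or interpolation between the $p = 2$ estimate and the contractivity \eqref{contraz}.

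The main obstacle is the rigorous justification of differentiating $F(r)$ and of running the Bernstein computation up to $r = s$, where $G(r,s)f$ need not share the regularity of $f$ for general $f \in L^p(\R^d,\mu_s)$. The standard remedy is to first prove both parts for $f \in C^2_c(\R^d)$, using interior Schauder estimates to supply the required derivatives, and then to extend to general $f$ by density in $L^p(\R^d,\mu_s)$ through the contractivity \eqref{contraz} and Fatou's lemma. The spatial behaviour at infinity needed for the maximum principle on $\R^d$ is uniformly controlled by the Lyapunov function $\varphi$.
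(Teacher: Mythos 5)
The paper does not supply its own proof of this proposition; it cites \cite[Thm.\ 4.5]{KunLorLun09Non} and \cite[proof of Prop.\ 3.3]{LorZam09}. So the comparison must be against the expected mechanism rather than a written-out argument, and on that basis your scheme is mostly on target but has one genuine gap.

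Your Bernstein computation for (i) is correct, including the absorption for $p\in(1,2)$: with $w=|\nabla u|^2$ and $\Gamma=\sum_k\langle Q\nabla D_k u,\nabla D_k u\rangle$, the estimate $\langle Q\nabla w,\nabla w\rangle\le 4w\Gamma$ (Cauchy--Schwarz) makes the two Hessian-type terms combine to $p(\varepsilon+w)^{p/2-2}\Gamma\,[-\varepsilon+(1-p)w]\le 0$ for $p\ge 1$, and the error from replacing $w$ by $\varepsilon+w$ in the $\nabla_x b$-term vanishes as $\varepsilon\to 0^+$. The maximum-principle step, justified via the Lyapunov function, is the same device used in the cited reference. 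The $p=2$ carr\'e-du-champ identity in (ii) and the Jensen lift to $p>2$ are also fine (Jensen is applicable because $G(t,s)$ is a Markov kernel and $u\mapsto u^{p/2}$ is convex for $p\ge 2$).

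The gap is the sentence ``For $p\in(1,2)$, the estimate is obtained by duality or interpolation between the $p=2$ estimate and the contractivity \eqref{contraz}.'' Neither route works as stated. Riesz--Thorin interpolation between $p=1$ and $p=2$ would require a bound on $\nabla_x G(t,s)$ on $L^1(\mu_s)$, but contractivity of $G(t,s)$ on $L^1$ gives no information at all on the gradient, and no such $L^1$ gradient bound is available. Duality would require identifying the $L^{p'}$-adjoint of $\nabla_x G(t,s)$ relative to the pair $(\mu_s,\mu_t)$, which involves the divergence operator weighted by the unknown densities $\rho(t,\cdot)$, and is not an operator about which anything is known here. Moreover, raising the pointwise $p=2$ inequality to the power $p/2$ and applying Jensen fails for $p<2$ because $u\mapsto u^{p/2}$ is then concave, so the inequality points the wrong way. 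The standard repair --- and what the cited references do --- is to run the carr\'e-du-champ computation directly with $F_\varepsilon(r)=G(t,r)\big((\varepsilon+(G(r,s)f)^2)^{p/2}\big)$, obtain the integral inequality with the weight $(\varepsilon+(G(r,s)f)^2)^{p/2-1}$ inside, and then combine it with the pointwise gradient estimate \eqref{grad_est_punt} through a further H\"older/Cauchy--Schwarz step adapted to that weight; this is genuinely more delicate than ``duality or interpolation'' and needs to be written out.
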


\begin{lemm}\label{density_mus}
Under Hypotheses $\ref{hyp1}(i)$-$(iii)$, for every $s\in I$ the
measure $\mu_s$ is absolutely continuous with
respect to the Lebesgue measure. More precisely, $\mu_s=\rho(s,\cdot)dx$ for  some strictly
positive and locally H\"older continuous function $\rho:I\times\Rd\to \R$.
\end{lemm}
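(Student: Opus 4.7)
My plan is to first write $\mu_s$ explicitly as a push-forward of $\mu_t$ under the kernel $g$, and then separately address positivity and Hölder regularity.

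\textbf{Step 1: Identify the density.} Fix $s\in I$ and any $t\in I$ with $t>s$. For every non-negative $f\in C_c(\R^d)$, the invariance identity \eqref{invariant-0} combined with the kernel representation \eqref{rep_nucleo} gives
\begin{equation*}
\int_{\R^d} f\,\mu_s(dy)=\int_{\R^d}(G(t,s)f)(x)\mu_t(dx)=\int_{\R^d}\!\!\int_{\R^d} g(t,s,x,y)f(y)\,dy\,\mu_t(dx).
\end{equation*}
Since $g\ge 0$ and $f\ge 0$, Tonelli's theorem lets me exchange the order of integration, so
\begin{equation*}
\int_{\R^d} f\,\mu_s(dy)=\int_{\R^d} f(y)\,\rho(s,y)\,dy,\qquad \rho(s,y):=\int_{\R^d} g(t,s,x,y)\mu_t(dx).
\end{equation*}
By the monotone class argument this identity extends to arbitrary non-negative Borel $f$, whence $\mu_s=\rho(s,\cdot)\,dx$. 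Note that the left-hand side does not depend on $t$, so $\rho(s,\cdot)$ is independent of the choice of $t>s$, which will be useful later when I let $t$ vary.

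\textbf{Step 2: Strict positivity.} Since $g(t,s,x,y)>0$ for every $x,y\in\R^d$ (a property inherited from the construction of the kernel in \cite{KunLorLun09Non} via strong maximum principle) and $\mu_t$ is a probability measure, the integral defining $\rho(s,y)$ is strictly positive for every $(s,y)$.

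\textbf{Step 3: Local H\"older continuity.} For fixed $t$, the function $(s,y)\mapsto g(t,s,x,y)$ solves backward in $s$ a parabolic equation whose coefficients are, by Hypothesis \ref{hyp1}(i), locally H\"older continuous and uniformly elliptic (by \eqref{ell}). Pick any compact set $K\subset\R^d$ and any compact interval $J\subset (s_0,t_0)\subset I$ with $t_0<t$. Classical interior Schauder/parabolic regularity estimates applied on $J\times K$, with a slightly larger enclosing compact $J'\times K'$, yield
\begin{equation*}
\|g(t,\cdot,x,\cdot)\|_{C^{\alpha/2,\alpha}(J\times K)}\le C_{J,K,t}\,\|g(t,\cdot,x,\cdot)\|_{L^\infty(J'\times K')},
\end{equation*}
where $C_{J,K,t}$ is independent of $x$. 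The right-hand side can be controlled uniformly in $x$ by the Gaussian-type upper bound on the transition kernel (see \cite{KunLorLun09Non}), producing a function $x\mapsto h(x)$ that is $\mu_t$-integrable (indeed bounded) and dominates the relevant H\"older quotients.

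\textbf{Step 4: Conclude.} Since the H\"older quotients of $g(t,\cdot,x,\cdot)$ on $J\times K$ are dominated by $h(x)\in L^1(\R^d,\mu_t)$, integrating against $\mu_t$ and applying the dominated convergence theorem shows that $\rho$ is H\"older continuous on $J\times K$. Because $J\subset I$ and $K\subset\R^d$ are arbitrary, $\rho\in C^{\alpha/2,\alpha}_{\rm loc}(I\times\R^d)$.

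The principal obstacle in this plan is Step 3: the uniform-in-$x$ Schauder estimate for $g(t,\cdot,x,\cdot)$ together with a $\mu_t$-integrable majorant. Once the Gaussian-type bounds on $g$ established in \cite{KunLorLun09Non} are available, everything else is routine parabolic regularity combined with the tightness of $\{\mu_t\}$ provided by the Lyapunov function $\varphi$ and \eqref{const-M}.
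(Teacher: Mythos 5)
Your Step 1 (absolute continuity via Tonelli) and Step 2 (positivity from $g>0$) are fine and genuinely more elementary than the paper, which instead cites \cite[Prop.~5.2]{KunLorLun09Non} for absolute continuity and gets positivity from \cite[Thm.~3.8]{BogKryRoc01}. But Step 3 — the heart of the H\"older claim — has a real gap.

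You write that ``$(s,y)\mapsto g(t,s,x,y)$ solves backward in $s$ a parabolic equation whose coefficients are, by Hypothesis \ref{hyp1}(i), locally H\"older continuous.'' This is not the right equation. What \cite[Lemma~3.2]{KunLorLun09Non} gives is that $(s,x)\mapsto g(t,s,x,y)$ satisfies the backward Kolmogorov equation $D_s u+\mathcal{A}(s)u=0$, i.e. regularity in the \emph{first} spatial slot. Your density is $\rho(s,y)=\int g(t,s,x,y)\,\mu_t(dx)$: after integrating $x$ away you need regularity of $g$ in its \emph{second} spatial slot $y$ (and in $s$). That dependence is governed by the formal adjoint (Fokker--Planck) equation $D_s w+\mathcal{A}^*(s)w=0$, and
\[
\mathcal{A}^*(s)\phi=\mathrm{Tr}\big(Q(s)D^2\phi\big)-\langle b(s,\cdot),\nabla\phi\rangle-\big(\mathrm{div}\,b(s,\cdot)\big)\phi,
\]
whose zeroth-order coefficient $\mathrm{div}\,b$ requires differentiability of $b$. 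Under Hypotheses \ref{hyp1}(i)--(iii) alone — which is all the lemma assumes — $b$ need not be differentiable in $x$, so the adjoint equation does not fall under classical Schauder theory, and the interior estimate you invoke is not available. (Even granting Hypothesis \ref{hyp1}(iv), you would have $\mathrm{div}\,b\in C^{\alpha/2,\alpha}_{\rm loc}$ but still need to justify the uniform-in-$x$ Schauder constant and a $\mu_t$-integrable $L^\infty$ majorant; the ``Gaussian-type upper bound on the transition kernel'' you cite from \cite{KunLorLun09Non} is not established there, and for drifts growing superlinearly such uniform Gaussian bounds generally fail.)

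This is exactly why the paper sidesteps the transition kernel and instead appeals to Bogachev--Krylov--R\"ockner \cite[Thm.~3.8]{BogKryRoc01}: that result proves positivity and local H\"older continuity of the density of the space-time measure $\nu(A\times B)=\int_A\mu_s(B)\,ds$ directly, under hypotheses on $Q,b$ that do \emph{not} require differentiability of $b$. It is a regularity theorem for measure-valued solutions of Fokker--Planck equations, not classical Schauder theory for the kernel. The paper then identifies the density of $\nu$ with $\rho(s,\cdot)$ for a.e.\ $s$, and upgrades to every $s$ by continuity of $s\mapsto\int\zeta\,d\mu_s$ (proved via $s\mapsto G(r,s)\zeta$). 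If you want to keep your kernel-based route, you would need to either strengthen the hypotheses to make the adjoint equation classical, or replace Step~3 with a citation to \cite{BogKryRoc01} as the paper does.
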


\begin{proof}  The measures $\mu_s$ are absolutely continuous with respect to the Lebesgue measure by \cite[Prop. 5.2]{KunLorLun09Non}. Local H\"older continuity  and positivity of $\rho$ is a consequence of  \cite[Sect. 3]{BogKryRoc01}. More precisely,
by  \cite[Thm. 3.8]{BogKryRoc01} we know that the measure
$\nu$ on $I\times \R^{d}$,  defined on products of Borel sets $A\subset I$ and $B\subset\Rd$  by
\begin{eqnarray*}
\nu(A\times B)=\int_A\mu_s(B)ds,
\end{eqnarray*}
has a positive density $\rho$ with respect to the Lebesgue measure, and  $\rho\in C^{\gamma}_{\rm loc}(I\times\R^d)$ for each $\gamma\in (0,1)$.
Hence, for each  Borel set $A\subset I$ and for   $\zeta \in C^{\infty}_c(\Rd)$ we have
\begin{eqnarray*}
\int_Ads\int_{\Rd}\zeta \mu_s(dx)=\int_{I\times\R^d}\chi_A\zeta
d\nu=\int_A ds\int_{\Rd}\zeta\rho(s,\cdot)dx.
\end{eqnarray*}
Since  $A$ is arbitrary,
\begin{equation}
\int_{\Rd}\zeta(x) \mu_s(dx)=\int_{\Rd}\zeta(x)\rho(s,x)dx,\qquad\;\,{\rm
for~a.e.}~s\in I.
\label{uguali}
\end{equation}
Let us prove that \eqref{uguali}   in fact holds for every $s\in I$, showing that both sides are continuous with respect to $s$. The right hand side is  continuous since $\rho$ is. By \eqref{invariant-0} the left hand side is equal to $\int_{\R^d}G(r,s)\zeta\mu_{r}(dx)$ for any $r>s$, and   the function $s\mapsto
G(r,s)\zeta$ is continuous  in $I \cap (-\infty, r]$, by \cite[Lemma 3.2]{KunLorLun09Non}. Then,
  $s\mapsto \int_{\Rd}\zeta\mu_s(dx)$ is continuous in $I \cap (-\infty, r]$, and since $r$ is arbitrary, it is continuous in $I$.   Then,  \eqref{uguali}  holds for each $s\in I$.

Let $B\subset \R^d$ be any Borel set.  Then $\chi_B$ is the a.e. limit (with respect to the
Lebesgue measure and, hence, with respect to each $\mu_s$)
of a bounded sequence of smooth
and compactly supported functions. From \eqref{uguali} we infer
\begin{equation}
\int_B\mu_s(dx)=\int_B\rho(s,x)dx,\qquad\;\,s\in I,
\label{uguali-1}
\end{equation}
and the proof is complete.
\end{proof}

The following lemma will be frequently used in the next sections.
Its (easy) proof follows from a standard truncation argument and the  equivalence of the Sobolev spaces $W^{1,p}(B(0,R), \rho dx)$ and $W^{1,p}(B(0,R),  dx)$ for every $R>0$,  if $\rho$ is a locally bounded function with positive infimum on every ball.

\begin{lemm}\label{density}
Let $\mu (dx) = \rho (x)dx$ be a probability measure on $\R^d$, where $\rho: \R^d\to \R$ is a locally bounded function with positive infimum on every ball. Then $C^{\infty}_c(\Rd)$ is dense in $W^{1,p}(\Rd,\mu)$.

In particular, under Hypotheses $\ref{hyp1}(i)$-$(iii)$, $C^{\infty}_c(\Rd)$ is dense in $W^{1,p}(\Rd,\mu_s)$ for each  $s\in I$ and   $p\in [1,+\infty)$.
\end{lemm}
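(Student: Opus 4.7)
The strategy is the standard two-step ``truncate and mollify'' argument, made possible by the equivalence of weighted and unweighted Sobolev norms on bounded balls.

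First I would fix $f\in W^{1,p}(\R^d,\mu)$ and cut off at infinity. Pick $\eta\in C^\infty_c(\R^d)$ with $0\le \eta\le 1$, $\eta\equiv 1$ on $B(0,1)$, $\supp\eta\subset B(0,2)$, and set $\eta_n(x)=\eta(x/n)$. Then $|\nabla \eta_n|\le \|\nabla\eta\|_\infty/n$ is uniformly bounded and $\eta_n\to 1$ pointwise. A direct computation gives
\begin{equation*}
\nabla(\eta_n f)=\eta_n\nabla f+f\nabla\eta_n,
\end{equation*}
and since $\mu$ is a finite measure and $f,|\nabla f|\in L^p(\R^d,\mu)$, dominated convergence shows $\eta_n f\to f$ and $\nabla(\eta_n f)\to \nabla f$ in $L^p(\R^d,\mu)$. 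Thus it suffices to approximate functions $g\in W^{1,p}(\R^d,\mu)$ with $\supp g$ contained in some fixed ball $B=B(0,R)$.

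Next I would switch to the unweighted setting. By the hypothesis on $\rho$, there exist constants $0<m_R\le M_R<+\infty$ with $m_R\le \rho\le M_R$ on $B(0,R+1)$. Consequently, for every $u$ supported in $B(0,R)$,
\begin{equation*}
m_R\|u\|_{W^{1,p}(B(0,R),dx)}^p\le \|u\|_{W^{1,p}(\R^d,\mu)}^p\le M_R\|u\|_{W^{1,p}(B(0,R),dx)}^p,
\end{equation*}
so the two norms are equivalent. In particular, the function $g$ above lies in $W^{1,p}(\R^d,dx)$ with $\supp g\subset B(0,R)$. Now apply a standard mollifier: let $\rho_\varepsilon$ be a smooth, nonnegative, compactly supported approximation of the identity, and set $g_\varepsilon:=g*\rho_\varepsilon$. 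For $\varepsilon$ small enough, $\supp g_\varepsilon\subset B(0,R+1)$ and $g_\varepsilon\in C^\infty_c(\R^d)$, while $g_\varepsilon\to g$ in $W^{1,p}(\R^d,dx)$ by classical results. Using the norm equivalence on $B(0,R+1)$ once more, $g_\varepsilon\to g$ in $W^{1,p}(\R^d,\mu)$. Composing the two approximations proves density.

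For the second assertion, Hypotheses~2.1(i)--(iii) and Lemma~2.3 guarantee that $\mu_s=\rho(s,\cdot)dx$ with $\rho(s,\cdot)$ locally Hölder continuous and strictly positive on $\R^d$. In particular, on every ball $\rho(s,\cdot)$ is continuous and strictly positive, hence locally bounded with positive infimum. The general statement therefore applies, yielding the density of $C^\infty_c(\R^d)$ in $W^{1,p}(\R^d,\mu_s)$ for every $s\in I$ and every $p\in[1,+\infty)$. The only mildly delicate point is checking that multiplication by $\eta_n$ preserves the Sobolev class and that the product rule is applicable for generic $W^{1,p}$ functions, which is routine once one notices that $\eta_n\in C^\infty_b(\R^d)$.
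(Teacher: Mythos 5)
Your proposal is correct and follows exactly the route the paper sketches: a standard truncation by smooth cut-offs followed by the equivalence of $W^{1,p}(B(0,R),\rho\,dx)$ and $W^{1,p}(B(0,R),dx)$ to reduce to the unweighted case and mollify. The paper gives only this outline, and your write-up fills in the routine details faithfully.
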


As a first consequence, we obtain a decay estimate for the gradient of $G(t,s)f$ as $t\to +\infty$.

\begin{prop}\label{cor:decay}
For every $p\geq 1$ there is $K_p>0$ such that for each $s\in I$, $t\geq s+1$ we have
\begin{equation}
\label{contraz_grad0}\|\,|\nabla_xG(t,s)f|\,\|_{L^p(\R^d, \mu_t)}    \leq K_p e^{r_0(t-s)}\| f \|_{L^p(\R^d, \mu_s)}, \qquad\;\, f\in   L^p(\Rd,\mu_s).
\end{equation}
\end{prop}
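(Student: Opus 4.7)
The plan is to split $G(t,s)=G(t,s+1)G(s+1,s)$, use the short-time gradient estimate \eqref{grad_est_norm_p} to bound $|\nabla G(s+1,s)f|$ in $L^p(\mu_{s+1})$, and then propagate forward in time via the pointwise gradient estimate \eqref{grad_est_punt} combined with the invariance relation \eqref{invariant-0}.

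First I would take $f\in C^1_b(\R^d)$ and set $g:=G(s+1,s)f$. The pointwise estimate \eqref{grad_est_punt} applied with $p=1$ yields $|\nabla g|\le e^{r_0}G(s+1,s)|\nabla f|\le e^{r_0}\|\,|\nabla f|\,\|_\infty$, so $g\in C^1_b(\R^d)$, which legitimately enters the hypotheses of \eqref{grad_est_punt} a second time. For $t\ge s+1$, applying \eqref{grad_est_punt} to $G(t,s+1)g$ gives
\begin{equation*}
|(\nabla_x G(t,s+1)g)(x)|^p\le e^{pr_0(t-s-1)}(G(t,s+1)|\nabla g|^p)(x),\qquad x\in\R^d.
\end{equation*}
Integrating this inequality against $\mu_t$ and using the defining relation \eqref{invariant-0} of the evolution system of measures (with $|\nabla g|^p\in C_b(\R^d)$) produces
\begin{equation*}
\int_{\R^d}|\nabla_x G(t,s)f|^p\,d\mu_t\le e^{pr_0(t-s-1)}\int_{\R^d}|\nabla g|^p\,d\mu_{s+1}.
\end{equation*}
Next I would invoke the short-time estimate \eqref{grad_est_norm_p} with the pair $(s,s+1)$, which gives
\begin{equation*}
\|\,|\nabla g|\,\|_{L^p(\R^d,\mu_{s+1})}\le c_p\,\|f\|_{L^p(\R^d,\mu_s)}.
\end{equation*}
Combining these two bounds yields the claim with $K_p:=c_p e^{-r_0}$ (which is positive since $r_0<0$) for $f\in C^1_b(\R^d)$.

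The remaining task is to extend the estimate to arbitrary $f\in L^p(\R^d,\mu_s)$. Here I would rely on Lemma \ref{density}, which gives density of $C^\infty_c(\R^d)\subset C^1_b(\R^d)$ in $L^p(\R^d,\mu_s)$. Picking a sequence $(f_n)\subset C^\infty_c(\R^d)$ with $f_n\to f$ in $L^p(\R^d,\mu_s)$, the inequality already established for $C^1_b$ data gives
\begin{equation*}
\|\,|\nabla_x G(t,s)(f_n-f_m)|\,\|_{L^p(\R^d,\mu_t)}\le K_p e^{r_0(t-s)}\|f_n-f_m\|_{L^p(\R^d,\mu_s)},
\end{equation*}
so $(\nabla_x G(t,s)f_n)$ is Cauchy in $L^p(\R^d,\mu_t;\R^d)$; its limit must coincide with $\nabla_x G(t,s)f$ because $G(t,s)f_n\to G(t,s)f$ in $L^p(\R^d,\mu_t)$ by the contractivity \eqref{contraz}, and the distributional gradient is continuous with respect to $L^p_{\rm loc}$ convergence (recall $\mu_t$ is equivalent to Lebesgue measure on each ball by Lemma \ref{density_mus}). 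Passing to the limit in the inequality for $f_n$ concludes the proof.

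The only delicate point is ensuring that $g=G(s+1,s)f$ has enough regularity to re-apply \eqref{grad_est_punt}; the pointwise gradient estimate with $p=1$ conveniently settles this for $C^1_b$ data, and everything else is standard density plus the invariance formula.
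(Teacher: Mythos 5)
Your proof is correct and follows essentially the same route as the paper's: split $G(t,s)=G(t,s+1)G(s+1,s)$, apply the short-time bound \eqref{grad_est_norm_p} on the first step and the pointwise gradient decay \eqref{grad_est_punt} (integrated against $\mu_t$ via \eqref{invariant-0}) on the remainder. The only cosmetic difference is the order of the density argument: the paper first extends the intermediate estimate \eqref{contraz_grad} from $C^1_b$ to $W^{1,p}(\R^d,\mu_s)$ via Lemma \ref{density} and then applies it to $G(s+1,s)f\in W^{1,p}(\R^d,\mu_{s+1})$, whereas you keep classical data through the composition and postpone the $L^p$ density extension to the very end, which forces you to additionally argue that distributional gradients pass to the limit; both are sound.
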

\begin{proof}
Integrating \eqref{grad_est_punt} with respect to $\mu_t$ and   using
\eqref{invariant-0} we obtain
\begin{equation}
\label{contraz_grad}
\|\,|\nabla_xG(t,s)f|\,\|_{L^p(\R^d, \mu_t)}    \leq e^{r_0(t-s)} \| \,|\nabla f|\, \|_{L^p(\R^d, \mu_s)} , \qquad\;\, t \geq s\in I,
\end{equation}
for each $f \in C^1_b(\Rd)$, and hence for each $f\in W^{1,p}(\R^d,\mu_s)$ by Lemma \ref{density}.
If $t\geq s+1$ and $f\in L^p(\R^d, \mu_s)$,
\begin{eqnarray*}
\|\,|\nabla_xG(t,s)f|\,\|_{L^p(\R^d, \mu_t)}  = \|\,|\nabla_xG(t,s+1)G(s+1,s)f|\,\|_{L^p(\R^d, \mu_t)},
\end{eqnarray*}
and the statement follows from \eqref{contraz_grad} and \eqref{grad_est_norm_p}. \end{proof}

For every $t \in I$ and $f \in L^1(\Rd, \mu_t)$ we denote by $m_t(f)$ the average of $f$ with respect to $\mu_t$, i.e.,
\begin{equation}
\label{media}
m_t(f)= \int_{\Rd} f(x) \mu_t(dx).
\end{equation}

In the following lemma we prove that $G(t,s)f$ converges to $m_s(f)$ as $t\to +\infty$. It is a first step towards better asymptotic behavior results, and will be used in the proof of the LSI inequality.
The same result has been proved in \cite{LorLunZam10} in the case of time periodic coefficients; here estimate \eqref{grad_est_punt} allows us to give a much simpler proof.

\begin{lemm}\label{norma_p}
For every  $s \in I$ and $p\in [1, +\infty)$ we have
\begin{eqnarray*}
\lim_{t \to +\infty}\Vert G(t,s)f-m_s(f)\Vert_{L^p(\Rd,\mu_t)}=0,\qquad\;\, f \in L^p(\Rd,\mu_s).
\end{eqnarray*}
\end{lemm}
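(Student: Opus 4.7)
The plan is to reduce first to smooth compactly supported data and then exploit the pointwise gradient estimate \eqref{grad_est_punt} together with the tightness of $\{\mu_t: t \in I\}$. Since the functional $f\mapsto m_s(f)$ is continuous on $L^p(\R^d,\mu_s)$ (as $\mu_s$ is a probability measure), and $G(t,s)$ is a contraction $L^p(\R^d,\mu_s)\to L^p(\R^d,\mu_t)$ by \eqref{contraz}, a routine triangle-inequality argument shows that it suffices to prove the statement for $f$ in any subset dense in $L^p(\R^d,\mu_s)$. I would take this dense set to be $C^\infty_c(\R^d)$, which is dense in $L^p(\R^d,\mu_s)$ (cf.\ Lemma \ref{density}).

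For $f\in C^\infty_c(\R^d)$, the pointwise gradient estimate \eqref{grad_est_punt} with $p=1$ yields
\begin{equation*}
|\nabla_x G(t,s)f(x)|\le e^{r_0(t-s)}(G(t,s)|\nabla f|)(x)\le L_t:=e^{r_0(t-s)}\|\nabla f\|_\infty,
\end{equation*}
so $G(t,s)f$ is globally Lipschitz in $x$ with constant $L_t\to 0$ as $t\to+\infty$, while $\|G(t,s)f\|_\infty\le\|f\|_\infty$. By the invariance property \eqref{invariant-0} applied with the constant function $\one$, one has $m_t(G(t,s)f)=m_s(f)$, so it is equivalent to prove that $\|G(t,s)f-m_t(G(t,s)f)\|_{L^p(\R^d,\mu_t)}\to 0$.

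To achieve this, I would combine a Jensen step with tightness. Writing $u_t:=G(t,s)f$ and using Jensen's inequality for $|\cdot|^p$ applied to the probability measure $\mu_t$,
\begin{equation*}
\|u_t-m_t(u_t)\|_{L^p(\mu_t)}^p\le\iint_{\R^d\times\R^d}|u_t(x)-u_t(y)|^p\,\mu_t(dx)\mu_t(dy).
\end{equation*}
Given $\varepsilon>0$, the tightness of $\{\mu_t:t\in I\}$ (which follows from \eqref{const-M}) supplies $R>0$ with $\mu_t(\R^d\setminus B(0,R))<\varepsilon$ for every $t\in I$. On $B(0,R)\times B(0,R)$ the Lipschitz bound gives $|u_t(x)-u_t(y)|\le 2RL_t$, while on the complement—whose $\mu_t\otimes\mu_t$-measure is at most $2\varepsilon$—one has $|u_t(x)-u_t(y)|\le 2\|f\|_\infty$. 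Hence
\begin{equation*}
\|G(t,s)f-m_s(f)\|_{L^p(\mu_t)}^p\le (2RL_t)^p+2\varepsilon(2\|f\|_\infty)^p.
\end{equation*}
Sending $t\to+\infty$ with $R$ fixed and then $\varepsilon\to 0^+$ closes the argument for smooth compactly supported $f$, and the initial reduction handles arbitrary $f\in L^p(\R^d,\mu_s)$.

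The only potentially delicate step is the pairing of the uniform decay of the spatial oscillation of $G(t,s)f$ with the time-varying measure $\mu_t$; tightness \emph{uniform in} $t$, built into the construction through the Lyapunov function $\varphi$, is exactly what bridges this gap, whereas in \cite{LorLunZam10} the same conclusion required the periodicity of the coefficients and a considerably more involved argument.
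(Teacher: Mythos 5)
Your argument is correct and rests on the same three ingredients the paper uses: density of $C^\infty_c(\R^d)$, the exponentially decaying Lipschitz bound from \eqref{grad_est_punt}, and the uniform tightness of $\{\mu_t\}$ together with the identity $m_t(G(t,s)f)=m_s(f)$. The only difference is cosmetic — you package the estimate as a single double integral via Jensen's inequality and localize to a fixed ball $B(0,R)$ chosen from tightness, whereas the paper first obtains a pointwise bound on $|(G(t,s)f)(x)-m_s(f)|$ by integrating over a $t$-dependent ball $B_t=B(0,e^{-r_0t/2})$ and then integrates in $x$; your version streamlines the bookkeeping but is not a genuinely different proof.
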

\begin{proof}
Let   $f\in C^\infty_c(\Rd)$. Then
\begin{eqnarray*}
(G(t,s)f)(x)-m_s(f)= \int_{\Rd} ((G(t,s)f)(x)-(G(t,s)f)(y))\mu_t(dy), \quad t\geq s,\;\, x\in \Rd.
\end{eqnarray*}
Set $B_t:= B(0,e^{-r_0 t/2})$, where $r_0$ is defined in Hypothesis \ref{hyp1}(iv), and $A_t:= \Rd\setminus B_t$.
For  $t\geq s$ and $x \in \Rd$ we have
\begin{align}\label{faccina}
\left|(G(t,s)f)(x)-m_s(f)\right|\le &\int_{A_t} |(G(t,s)f)(x)-(G(t,s)f)(y)|\mu_t(dy)\nonumber\\
&+ \int_{B_t} |(G(t,s)f)(x)-(G(t,s)f)(y)|\mu_t(dy)\nonumber\\
\leq & 2 \Vert f\Vert_\infty \mu_t(A_t)+ \Vert\,|\nabla_x G(t,s)f|\,\Vert_\infty \int_{B_t} |x-y|\mu_t(dy)\nonumber\\
\leq & 2 \Vert f\Vert_\infty \mu_t(A_t)+  e^{r_0(t-s)}\Vert\,|\nabla f|\,\Vert_\infty \left (|x|+ \int_{B_t}|y|\mu_t(dy)\right )\nonumber\\
\leq & 2\Vert f\Vert_\infty \mu_t(A_t)+\Vert\,|\nabla f|\,\Vert_\infty\left(e^{r_0(t-s)}|x|+e^{-r_0 s}e^{\frac{1}{2}r_0 t}\right),
\end{align}
where we have used \eqref{grad_est_punt}. It   follows that
\begin{align*}
\Vert G(t,s)f-m_s(f)&\Vert_{L^p(\Rd,\mu_t)}^p\leq \int_{A_t} \left|(G(t,s)f)(x)-m_s(f)\right|^p\mu_t(dx)\notag\\
&\quad\quad\quad\quad\quad\quad+ \int_{B_t} \left|(G(t,s)f)(x)-m_s(f)\right|^p\mu_t(dx)\notag\\
& \leq 2^p \Vert f\Vert_\infty^p \mu_t(A_t)+2^{p-1}\Big(2 \Vert f\Vert_\infty\mu_t(A_t)+\Vert\,|\nabla f|\,\Vert_\infty e^{-r_0 s}e^{\frac{1}{2}r_0 t}\Big)^p\notag\\
& \quad\quad + 2^{p-1}( e^{r_0(t-s)}\Vert\,|\nabla f|\,\Vert_\infty)^p\int_{B_t}|x|^p \mu_t(dx)\notag\\
& \leq 2^{p} \Vert f\Vert_\infty^p \mu_t(A_t)\Big(1+ 2^{2p-2} (\mu_t(A_t))^{p-1}\Big )\notag\\
&\quad\quad + 2^{p-1}\Vert\,|\nabla f|\,\Vert_\infty^p e^{-pr_0 s}e^{\frac{1}{2}pr_0 t}(2^{p-1}+1).
\end{align*}
We recall that the family of measures $\{\mu_t: t\in I\}$ is tight. Therefore, since
the radius of the ball $B_t$ tends to $+\infty$ as $t\to +\infty$ and $A_t=\Rd\setminus B_t$,
$\mu_t(A_t)$ tends to $0$ as $t\to +\infty$. This shows that $\Vert G(t,s)f-m_s(f)\Vert_{L^p(\Rd,\mu_t)}$ vanishes as $t\to +\infty$.

Since $C^\infty_c(\Rd)$ is dense in $L^p(\Rd,\mu_s)$ the statement follows.
\end{proof}

\begin{rmk}
\label{rem:tight}
{\rm  In the proof of the previous lemma the only property of the set of probability measures $\{\mu_s:\;s\in I\}$ that we use is the tightness.  In particular, by
  \eqref{faccina} for every tight  evolution system of measures
$\{\nu_s:\;s\in I\}$ and for every $f\in C^{\infty}_c(\R^d)$, the mean values of $f$ with respect to   $\mu_s$ and to $\nu_s$ are the pointwise limit of $G(t,s)f$ as $t\to +\infty$, so that they coincide for every $s$. Then, $\mu_s=\nu_s$, i.e., $\{\mu_s:\;s\in I\}$
is the {\em unique} tight evolution system of measures for $G(t,s)$.}
\end{rmk}


\section{Logarithmic Sobolev inequality}\label{sec_log_sob}


Throughout this section we set $0\log 0 =0$. First of all, we prove a crucial preliminary lemma.

\begin{lemm}\label{derivative}
Assume that Hypotheses $\ref{hyp1}(i)$-$(iii)$ hold. Then:
\begin{enumerate}[\rm (i)]
\item
if $f\in C^2_b(\Rd)$ is constant outside a compact set $K\subset \Rd$, then the function $r \mapsto \int_{\Rd}f(x)\mu_r(dx)$ is continuously differentiable in $I$ and
\begin{eqnarray*}
\frac{d}{dr}\int_{\Rd}f(x) \mu_r(dx)= -\int_{\Rd} (\mathcal{A}(r)f)(x)
\mu_r(dx),\qquad\;\, r \in I.
\end{eqnarray*}
\item
Let $[a,b]\subset I$. If $f\in C^{1,2}_b([a,b]\times\Rd)$ and  $f(r,\cdot)$ is constant outside a compact set $K$ for every $r\in [a,b]$, then the function $r \mapsto \int_{\Rd}f(r,x)\mu_r(dx)$ is continuously differentiable in $[a,b]$ and
\begin{eqnarray*}
\;\;\;\;\;\;\;\;\;\;\;\frac{d}{dr}\int_{\Rd}f(r,x) \mu_r(dx)=\int_{\Rd}D_rf(r,x)\mu_r(dx) -\int_{\Rd} (\mathcal{A}(r)f(r, \cdot))(x)\mu_r(dx),
\end{eqnarray*}
for every  $r \in [a,b]$.
\end{enumerate}
\end{lemm}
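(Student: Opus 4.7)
The plan is to set $\phi(r) := \int_{\R^d} f \, d\mu_r$ and to derive the integral identity
\[
\phi(r) - \phi(s) = -\int_s^r \int_{\R^d} (\mathcal{A}(\sigma) f)(x) \, \mu_\sigma(dx) \, d\sigma, \qquad s<r \in I,
\]
from which both the formula in (i) and continuity of $\phi'$ will follow by the fundamental theorem of calculus. The key ingredient is a ``backward Duhamel'' representation
\[
G(r, s) f = f + \int_s^r G(r, \sigma) \mathcal{A}(\sigma) f \, d\sigma, \qquad s<r\in I. \qquad (\ast)
\]
Since $f$ is constant outside the compact set $K$, $\mathcal{A}(\sigma) f$ lies in $C_b(\R^d)$ with support in $K$ and depends continuously on $\sigma$ by Hypothesis \ref{hyp1}(i); the right-hand side of $(\ast)$ is therefore a bounded continuous function of $(r,x)$. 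A direct computation using the forward equation $\partial_r G(r, \sigma) g = \mathcal{A}(r) G(r, \sigma) g$ shows that it is a bounded classical solution of the Cauchy problem $\partial_r u = \mathcal{A}(r) u$, $u(s, \cdot) = f$, which by uniqueness of bounded classical solutions (recalled at the beginning of Section \ref{sect-2}) must coincide with $G(r, s) f$.

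Integrating $(\ast)$ against $\mu_r$ and applying Fubini together with two uses of the invariance property \eqref{invariant-0} — on the left, $\int G(r, s) f \, d\mu_r = \int f \, d\mu_s = \phi(s)$, and inside the time integral, $\int G(r, \sigma) \mathcal{A}(\sigma) f \, d\mu_r = \int \mathcal{A}(\sigma) f \, d\mu_\sigma$ — I obtain the displayed identity. The map $\sigma \mapsto \int \mathcal{A}(\sigma) f \, d\mu_\sigma$ is continuous: by Lemma \ref{density_mus}, $\mu_\sigma = \rho(\sigma, \cdot)\,dx$ with $\rho$ locally jointly continuous, the function $(\sigma, x) \mapsto (\mathcal{A}(\sigma) f)(x)$ is jointly continuous, uniformly bounded, and supported in the fixed compact $K$, so dominated convergence applies. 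Part (i) then follows from the fundamental theorem of calculus.

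For (ii), the same scheme is applied to the auxiliary map $u(\sigma) := G(r, \sigma) f(\sigma, \cdot)$ on $[s, r]$. Differentiating $(\ast)$ in the lower endpoint gives the backward equation $\partial_\sigma G(r, \sigma) g = -G(r, \sigma) \mathcal{A}(\sigma) g$, which combined with the $C^1$ dependence of $f$ on $\sigma$ yields $u'(\sigma) = G(r, \sigma)[D_\sigma f(\sigma, \cdot) - \mathcal{A}(\sigma) f(\sigma, \cdot)]$, hence
\[
f(r, \cdot) - G(r, s) f(s, \cdot) = \int_s^r G(r, \sigma)[D_\sigma f(\sigma, \cdot) - \mathcal{A}(\sigma) f(\sigma, \cdot)] \, d\sigma;
\]
integrating against $\mu_r$ and invoking \eqref{invariant-0} exactly as in (i) produces $\phi(r) - \phi(s) = \int_s^r \int [D_\sigma f(\sigma, \cdot) - \mathcal{A}(\sigma) f(\sigma, \cdot)] \, d\mu_\sigma \, d\sigma$, and the conclusion follows by continuity.

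The main obstacle will be the rigorous derivation of $(\ast)$: one must verify that the right-hand side is genuinely $C^{1,2}$ in $(r, x)$ and that the $r$-derivative may be brought inside the $\sigma$-integral. This reduces to joint continuity and $x$-smoothness of $(r, \sigma, x) \mapsto G(r, \sigma) g(x)$ for $g \in C_b(\R^d)$ with compact support, which are available from the general theory summarised at the start of Section \ref{sect-2}. The hypothesis that $f$ is constant outside a compact set is essential here: it is what keeps $\mathcal{A}(\sigma) f$ bounded despite the potentially unbounded drift $b$, and without it the integrand $G(r, \sigma) \mathcal{A}(\sigma) f$ in $(\ast)$ could not even be interpreted as a bounded continuous function.
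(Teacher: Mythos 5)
Your proof is correct in spirit and reaches the same integral identity
$\phi(r)-\phi(s)=-\int_s^r\int_{\R^d}\mathcal{A}(\sigma)f\,d\mu_\sigma\,d\sigma$
that underlies the paper's argument, and your use of Fubini together with two applications of the invariance relation \eqref{invariant-0} is exactly the paper's mechanism. Where you diverge is in three technical steps. First, the paper obtains the Duhamel-type identity by directly citing \cite[Lemma~3.2]{KunLorLun09Non}, which gives the backward differentiability $\partial_\sigma G(t,\sigma)g=-G(t,\sigma)\mathcal{A}(\sigma)g$ (in integrated form), whereas you re-derive $(\ast)$ by showing the right-hand side is a bounded classical solution of the forward Cauchy problem and invoking uniqueness; this is valid but, as you yourself flag, hinges on joint $C^{1,2}$-regularity of $(r,x)\mapsto\int_s^r (G(r,\sigma)\mathcal{A}(\sigma)f)(x)\,d\sigma$, which requires control of $G$ jointly in $(r,\sigma,x)$ that is not actually summarised in Section~\ref{sect-2} of the paper --- the cleanest way to close that gap is precisely the reference to \cite[Lemma~3.2]{KunLorLun09Non} that the paper uses. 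Second, for the continuity of $\sigma\mapsto\int\mathcal{A}(\sigma)f\,d\mu_\sigma$, you use the density representation of Lemma~\ref{density_mus} and dominated convergence over the fixed compact $K$, which is more elementary than the paper's route (it first proves continuity of $G(t,\cdot)\mathcal{A}(\cdot)f$ in $C_b$ and then writes $\int\mathcal{A}(\sigma)f\,d\mu_\sigma=\int G(t,\sigma)\mathcal{A}(\sigma)f\,d\mu_t$); both work, but yours sidesteps the $G$-continuity estimate. Third, for part~(ii) you differentiate $\sigma\mapsto G(r,\sigma)f(\sigma,\cdot)$ by a product rule and obtain a one-shot Duhamel formula, whereas the paper splits $\int f(r+h,\cdot)\,d\mu_{r+h}-\int f(r,\cdot)\,d\mu_r$ into a piece controlled by the continuity of $\rho$ and a piece to which part~(i) applies; your version is more symmetric and yields the stated derivative formula directly, but needs a further justification (differentiability of $\sigma\mapsto G(r,\sigma)f(\sigma,\cdot)$ in $C_b$) which the paper's splitting avoids by reducing entirely to part~(i) plus continuity of the density. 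In short, the architecture is the same, but you pay for a cleaner statement of the auxiliary Duhamel identities with somewhat heavier regularity requirements on $G$ that you should either verify explicitly or replace with the citation the paper uses.
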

\begin{proof}
(i) To begin with, let us observe that, for any $t\in I$,
the function $G(t,\cdot)\mathcal{A}(\cdot)f$ is continuous and bounded in $(I\cap(-\infty,t])\times\Rd=:I_t\times\Rd$.
Indeed, for any $\sigma, \sigma_0 \in I_t$,
\begin{align}
|(G(t,\sigma)\mathcal{A}(\sigma)f)(x)-(G(t,\sigma_0)&\mathcal{A}(\sigma_0)f)(x_0)|\notag\\
&\leq |(G(t,\sigma)(\mathcal{A}(\sigma)f-\mathcal{A}(\sigma_0)f))(x)|\notag\\
&\quad+|((G(t,\sigma)-G(t,\sigma_0))\mathcal{A}(\sigma_0)f)(x)|\notag\\
&\quad+ |(G(t,\sigma_0)\mathcal{A}(\sigma_0)f)(x)-(G(t,\sigma_0)\mathcal{A}(\sigma_0)f)(x_0)|\notag\\
&\leq \Vert \mathcal{A}(\sigma)f-\mathcal{A}(\sigma_0)f  \Vert_\infty\notag\\
&\quad+ \Vert(G(t,\sigma)-G(t,\sigma_0))\mathcal{A}(\sigma_0)f\Vert_\infty\notag\\
&\quad+ |(G(t,\sigma_0)\mathcal{A}(\sigma_0)f)(x)-(G(t,\sigma_0)\mathcal{A}(\sigma_0)f)(x_0)|.
\label{star}
\end{align}
Clearly, the first and the third addenda in the right-hand side of \eqref{star} vanish as $\sigma\to\sigma_0$ and
$x\to x_0$, respectively. Concerning the second one, we observe that it tends to $0$ as $\sigma
\to \sigma_0$ since the function $G(t,\cdot)g$ is continuous in $I_t$ with values in $C_b(\Rd)$ for any $g \in C_0(\Rd)$ by \cite[Lemma 3.2]{KunLorLun09Non},
and $\mathcal{A}(\sigma_0)f \in C_c(\Rd)$.
Again by \cite[Lemma 3.2]{KunLorLun09Non}, for $I\ni r, \,r+h <t$   we have
\begin{eqnarray*}
(G(t,r+h)f)(x)-(G(t,r)f)(x)= -\int_{r}^{r+h}(G(t,\sigma)\mathcal{A}(\sigma)f)(x)d\sigma.
\end{eqnarray*}
Integrating  over $\R^d$  with respect to $\mu_t$, we get
\begin{align}
&\int_{\Rd}\Big((G(t,r+h)f)(x) -(G(t,r)f)(x)\Big)\mu_t(dx)\notag\\
=&
-\int_{\Rd}\bigg(\int_{r}^{r+h}(G(t,\sigma)\mathcal{A}(\sigma)f)(x)d\sigma\bigg)\mu_t(dx)\notag\\
=&- \int_{r}^{r+h}\bigg(\int_{\Rd}(G(t,\sigma)\mathcal{A}(\sigma)f)(x)\mu_t(dx)\bigg)d\sigma .
\label{form_KLL}
\end{align}
Using \eqref{invariant-0}, \eqref{form_KLL} can be rewritten as
\begin{align*}
\int_{\Rd}f(x)\mu_{r+h}(dx)-\int_{\Rd}f(x)\mu_r(dx)= -\int_{r}^{r+h}\int_{\Rd}
\Big((\mathcal{A}(\sigma)f)(x)\mu_\sigma(dx)\Big)d\sigma.
\end{align*}
Since the function $\sigma \mapsto
\int_{\Rd}(\mathcal{A}(\sigma)f)(x)\mu_{\sigma}(dx)=\int_{\Rd}(G(t,\sigma)\mathcal{A}(\sigma)f)(x)\mu_t(dx)$
is continuous in $I_t$, the claim follows dividing both sides by $h$ and letting $h\to 0$.

\vspace{3mm}

(ii) For $r$, $r+h\in [a,b]$ we have
\begin{align*}
&\int_{\Rd}f(r+h,x)\mu_{r+h}(dx)-\int_{\Rd}f(r,x)\mu_r(dx)\\
=& \int_{\Rd}\big(f(r+h,x)-f(r,x)\big)\mu_{r+h}(dx)+ \int_{\Rd}f(r,x)\mu_{r+h}(dx)-\int_{\Rd}f(r,x)\mu_r(dx).
\end{align*}
The statement  follows from (i) and from the continuity of the density $\rho$ in $I\times\Rd$ (Lemma \ref{density_mus}).
\end{proof}

In the proof of the LSI we will use also the next convergence lemma, a consequence of Lemma \ref{norma_p}.

\begin{prop}
For every $f \in C_b(\Rd)$ with positive infimum,
\begin{equation}\label{inf_fp}
\lim_{t\to+\infty}\int_{\Rd}(G(t,s)f)(x)\log((G(t,s)f)(x))\mu_t(dx)= m_s(f)
\log (m_s(f)),\qquad\;\, s \in I.
\end{equation}
\end{prop}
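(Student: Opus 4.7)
The plan is to reduce the claim to the $L^1$-convergence already established in Lemma \ref{norma_p}, exploiting the fact that $u \mapsto u\log u$ is Lipschitz on any compact subinterval of $(0,+\infty)$.

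First, I would localize the range of the functions involved. Let $\delta:=\inf_{\R^d} f>0$ and $M:=\|f\|_\infty$. Since the transition kernel $g(t,s,x,\cdot)$ is nonnegative with total mass equal to one (see \eqref{rep_nucleo}), the representation formula yields
\begin{equation*}
\delta\le (G(t,s)f)(x)\le M,\qquad t>s,\;x\in\R^d,
\end{equation*}
and integrating $\delta\le f\le M$ with respect to $\mu_s$ gives $\delta\le m_s(f)\le M$ as well.

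Next, set $\Phi(u):=u\log u$ for $u>0$. Since $\Phi'(u)=1+\log u$ is bounded on $[\delta,M]$, the function $\Phi$ is Lipschitz on $[\delta,M]$; let $L$ denote its Lipschitz constant there. Using that $\mu_t$ is a probability measure, we may write
\begin{align*}
\bigg|\int_{\R^d}\Phi(G(t,s)f)\,d\mu_t-\Phi(m_s(f))\bigg|
&=\bigg|\int_{\R^d}\bigl[\Phi(G(t,s)f)-\Phi(m_s(f))\bigr]\,d\mu_t\bigg|\\
&\le L\int_{\R^d}|G(t,s)f-m_s(f)|\,d\mu_t\\
&=L\,\|G(t,s)f-m_s(f)\|_{L^1(\R^d,\mu_t)}.
\end{align*}
Finally, Lemma \ref{norma_p} (applied with $p=1$) shows that the last quantity tends to zero as $t\to+\infty$, which establishes \eqref{inf_fp}.

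I do not foresee any real obstacle here: the only delicate point is the lower bound $G(t,s)f\ge\delta$, which is ensured by the positivity of the transition kernel and by $\int_{\R^d}g(t,s,x,y)\,dy=1$. Once this is in place the argument is a one-line Lipschitz estimate combined with the already-proved $L^1$-convergence of $G(t,s)f$ to its mean.
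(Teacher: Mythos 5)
Your proof is correct. It differs from the paper's in one small but genuine way: you exploit the positive infimum $\delta$ of $f$ (and hence of $G(t,s)f$, via the stochastic kernel) to confine everything to the interval $[\delta,M]$, where $u\mapsto u\log u$ is Lipschitz, and then estimate directly against the $L^1$-norm. The paper instead uses that $u\mapsto u\log u$ is $1/2$-H\"older continuous on bounded subsets of $[0,+\infty)$, which controls the difference of the integrands by $C|G(t,s)f-m_s(f)|^{1/2}$, and then applies H\"older's inequality (with the fact that $\mu_t$ is a probability measure) to bound $\int|G(t,s)f-m_s(f)|^{1/2}d\mu_t$ by $\|G(t,s)f-m_s(f)\|_{L^1(\R^d,\mu_t)}^{1/2}$, before invoking the same Lemma \ref{norma_p}. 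Your route is marginally more elementary (no H\"older step) and yields a cleaner bound; the paper's route is slightly more robust in that the $1/2$-H\"older estimate does not require a strictly positive lower bound on the argument, though that bound is anyway supplied by the hypothesis on $f$. Both ultimately rest on the same $L^1$-convergence result and are equally valid.
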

\begin{proof}
Since $G(t,s)$ preserves boundedness and positivity,  $G(t,s)f$ is bounded and has positive values, for every $t>s$.
Recalling that the function $y \mapsto y\log y$ is $1/2$-H\"older continuous on bounded sets of $[0,+\infty)$,  we get
\begin{align*}
&\left|\int_{\Rd}(G(t,s)f)(x)\log((G(t,s)f)(x))\mu_t(dx)- m_s(f)
\log (m_s(f))\right|\\
&\quad\quad=\left|\int_{\Rd}\left((G(t,s)f)(x)
\log((G(t,s)f)(x))- m_s(f) \log (m_s(f))\right)\mu_t(dx)\right|\\
&\quad\quad\leq C\int_{\Rd}|(G(t,s)f)(x)-m_s(f)|^{1/2}\mu_t(dx),
\end{align*}
for some positive constant $C$. By the H\"older  inequality,
\begin{eqnarray*}
\int_{\Rd}|(G(t,s)f)(x)-m_s(f)|^{1/2}\mu_t(dx) \leq \Vert G(t,s)f-
m_s(f)\Vert_{L^1(\R^d,\mu_t)}^{1/2}.
\end{eqnarray*}
Then, the claim follows from Lemma \ref{norma_p}.
\end{proof}

Now, we establish a logarithmic Sobolev inequality.

\begin{thm}\label{thm_log_sob}
For every $f\in C^1_b(\Rd)$,   $p\in (1,+\infty)$ and   $s\in I$, we have
\begin{align}
\int_{\Rd} |f|^p\log(|f|) \,\mu_s(dx)\leq &\frac{1}{p}m_s(|f|^p)
\log(m_s(|f|^p))\notag\\
&+\frac{p\Lambda}{2 |r_0|}\int_{\Rd}|f|^{p-2}|\nabla f|^2 \chi_{\{f\neq 0\}}\mu_s(dx).
\label{Log_Sob}
\end{align}
\end{thm}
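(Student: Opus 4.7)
The strategy is to adapt the Deuschel--Stroock computation (\cite{DS}) to the nonautonomous setting, combining the pointwise gradient estimate \eqref{grad_est_punt} with the time-differentiation formula of Lemma~\ref{derivative}(ii). First I would reduce to the case $f\in C^1_b(\R^d)$ with a positive lower bound $f\geq\varepsilon>0$, by applying the inequality to $g_\varepsilon:=(f^2+\varepsilon^2)^{1/2}$ and letting $\varepsilon\to 0^+$ via dominated convergence at the end. For such $f$ and fixed $s\in I$, set $v(t,x):=(G(t,s)f^p)(x)$, which is smooth, bounded, and satisfies $v(t,x)\geq\varepsilon^p$ for all $t\ge s$, $x\in\R^d$; then consider
$$
H(t):=\int_{\Rd}v(t,x)\log v(t,x)\,\mu_t(dx),\qquad t\geq s.
$$
By construction $H(s)=p\int_{\Rd}f^p\log f\,\mu_s(dx)$, while \eqref{inf_fp} applied to $f^p$ gives $\lim_{t\to+\infty}H(t)=m_s(f^p)\log m_s(f^p)$.

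The key identity is
$$
H'(t)=-\int_{\Rd}\frac{\langle Q(t)\nabla v(t,\cdot),\nabla v(t,\cdot)\rangle}{v(t,\cdot)}\,\mu_t(dx),
$$
obtained from Lemma~\ref{derivative}(ii) applied to $\phi(t,x)=v\log v$: the chain rule gives $\mathcal{A}(t)(v\log v)=(\log v+1)\mathcal{A}(t)v+\langle Q(t)\nabla v,\nabla v\rangle/v$, and since $D_tv=\mathcal{A}(t)v$ the contributions involving $(\log v+1)\mathcal{A}(t)v$ cancel. To estimate the integrand I would combine the pointwise gradient estimate \eqref{grad_est_punt} with exponent $p=1$ applied to $g=f^p$, with the conditional Cauchy--Schwarz inequality $(G(t,s)|\nabla g|)^2\leq G(t,s)(|\nabla g|^2/g)\cdot G(t,s)g$ (valid since $G(t,s)$ is a Markov operator and $g>0$), to obtain
$$
\frac{|\nabla v(t,\cdot)|^2}{v(t,\cdot)}\leq e^{2r_0(t-s)}G(t,s)\Big(\frac{|\nabla f^p|^2}{f^p}\Big)=p^2e^{2r_0(t-s)}G(t,s)\bigl(f^{p-2}|\nabla f|^2\bigr).
$$
Using the ellipticity bound \eqref{ell}, integrating against $\mu_t$, and invoking the invariance identity \eqref{invariant-0}, we get $-H'(t)\leq \Lambda p^2 e^{2r_0(t-s)}\int_{\Rd}f^{p-2}|\nabla f|^2\mu_s$. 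Integrating over $(s,+\infty)$ and using $\int_s^{+\infty}e^{2r_0(t-s)}dt=(2|r_0|)^{-1}$ gives
$$
p\int_{\Rd}f^p\log f\,\mu_s-m_s(f^p)\log m_s(f^p)\leq \frac{\Lambda p^2}{2|r_0|}\int_{\Rd}f^{p-2}|\nabla f|^2\,\mu_s,
$$
which, divided by $p$, is exactly \eqref{Log_Sob} in this restricted setting.

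The main obstacle is the rigorous derivation of the formula for $H'$: although $v\log v$ is smooth and bounded, it is not constant outside a compact set in $x$, so Lemma~\ref{derivative}(ii) does not apply directly. I would circumvent this by a spatial cutoff: pick $\eta_R\in C^\infty_c(\R^d)$ with $\eta_R\equiv 1$ on $B(0,R)$, $\supp\eta_R\subset B(0,2R)$, and $|\nabla\eta_R|+|D^2\eta_R|\leq C/R$, apply Lemma~\ref{derivative}(ii) to $\eta_R(x)v(t,x)\log v(t,x)$ (which is now constant outside a compact set), expand $\mathcal{A}(t)(\eta_R v\log v)$ by the Leibniz rule, and let $R\to+\infty$; the tightness of $\{\mu_t\}$ (a consequence of \eqref{const-M}) together with the drift bound \eqref{inequality-b} make the terms carrying derivatives of $\eta_R$ vanish in the limit, while the remaining pieces pass to the limit by dominated convergence. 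Finally, the positivity assumption is removed by applying the inequality to $g_\varepsilon=(f^2+\varepsilon^2)^{1/2}\in C^1_b(\R^d)$, observing that $g_\varepsilon^{p-2}|\nabla g_\varepsilon|^2\to |f|^{p-2}|\nabla f|^2\chi_{\{f\neq 0\}}$ pointwise and in a dominated way (using $g_\varepsilon\geq|f|$ and $g_\varepsilon\leq\|f\|_\infty+\varepsilon$ to handle the cases $p<2$ and $p\geq 2$ separately), and passing to the limit in every term.
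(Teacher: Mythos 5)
Your overall strategy---studying $H(t)=\int_{\Rd}(G(t,s)f^p)\log(G(t,s)f^p)\,\mu_t(dx)$, cutting off in space to invoke Lemma~\ref{derivative}(ii), bounding the Dirichlet-form term by the pointwise gradient estimate and the Cauchy--Schwarz inequality for the Markov kernel, and then sending $t\to+\infty$ via \eqref{inf_fp}---is precisely the route the paper takes. But there is a genuine gap in the cutoff step, namely in your claim that ``the tightness of $\{\mu_t\}$ together with the drift bound \eqref{inequality-b} make the terms carrying derivatives of $\eta_R$ vanish in the limit.''

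The problematic term is
\[
\int_{\Rd} (v\log v)\,\langle b(t,\cdot),\nabla\eta_R\rangle\,\mu_t(dx),
\qquad
\langle b(t,x),\nabla\eta_R(x)\rangle=\frac{\eta'(|x|/R)}{R\,|x|}\,\langle b(t,x),x\rangle.
\]
The bound \eqref{inequality-b} is only \emph{one-sided}: it gives $\langle b(t,x),x\rangle\le C_{a,b}$, but $\langle b(t,x),x\rangle$ may tend to $-\infty$ (take $b(t,x)=-x|x|^{\beta-2}$ with $\beta>2$, which is perfectly admissible under Hypotheses~\ref{hyp1}). Then $|\langle b,\nabla\eta_R\rangle|\sim R^{\beta-2}\to\infty$ on the annulus $\{R\le|x|\le 2R\}$, and neither tightness nor \eqref{const-M} gives enough decay of $\mu_t$ there to force the integral to zero without additional growth assumptions on $b$. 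So you cannot conclude that $H'$ equals $-\int\langle Q\nabla v,\nabla v\rangle v^{-1}\,\mu_t$; the drift-cutoff contribution need not disappear.

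The paper sidesteps this by an extra normalization you omit: it first reduces to $\delta\le f\le 1$, which forces $v\log v\le 0$. Combined with $\eta'\le 0$ and the one-sided bound $\langle b,x\rangle\le C_{s,T}$, this gives
\[
-(v\log v)\,\langle b,\nabla\theta_n\rangle \;\ge\; -(v\log v)\,\eta'\!\left(\tfrac{|x|}{n}\right)\frac{C_{s,T}}{n^2},
\]
whose integral tends to $0$. One only gets $\liminf_{n}\int I_3\ge 0$, \emph{not} $\lim=0$, and therefore only the integrated inequality $F(t)-F(s)\ge\int_s^t g(\sigma)\,d\sigma$---which is exactly the direction needed and is all the rest of the argument uses. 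Your write-up should replace the claim of vanishing by this sign argument (after normalizing $\sup f\le 1$); otherwise the ``key identity'' $H'(t)=-\int\langle Q(t)\nabla v,\nabla v\rangle/v\,\mu_t(dx)$ is not established. The remaining pieces of your argument (the chain-rule computation, the use of \eqref{grad_est_punt} with the conditional Cauchy--Schwarz inequality, the passage to general $f\in C^1_b$ via $g_\varepsilon$) match the paper and are sound.
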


\begin{proof}
To achieve \eqref{Log_Sob}, we would like to follow the method of Deuschel and Stroock:
differentiate the function
\begin{eqnarray*}
F(t):= \int_{\Rd}(G(t,s)f^p)(x)\log ((G(t,s)f^p)(x))\mu_t(dx),\qquad\;\,t \ge s,
\end{eqnarray*}
and prove that its derivative satisfies the inequality
\begin{equation}
F'(t)\ge -C e^{-c(t-s)} \int_{\Rd} f^{p-2}|\nabla f|^2 \mu_s(dx),\qquad\;\,t \ge s,
\label{star1}
\end{equation}
for some positive constants $C$ and $c$, independent of $f$. Then, the claim would follow by integrating
\eqref{star1} with respect to $t$ from $s$ to $+\infty$ and taking \eqref{inf_fp} into account.
However, we have to overcome some  difficulties due to the explicit time dependence of $\mu_t$. By Lemma \ref{derivative} we can differentiate the function
$\int_{\Rd}g \mu_t(dx)$ if $g$ is (smooth enough and) constant outside a compact set. But in general  $G(t,s)f^p\log(G(t,s)f^p)$ is not constant outside any compact set. Then we have to introduce a sequence of cut-off functions $\theta_n$ in the integral that defines $F$, and this gives rise to several additional terms that have to be controlled.

\vspace{3mm}
We split the proof in two steps. In the first step we prove \eqref{Log_Sob}
for functions  $f\in C^1_b(\Rd)$ with positive infimum, then we extend the claim
to general functions belonging to $C^1_b(\Rd)$.

\vspace{3mm}

{\em Step 1.}
Without loss of generality we may assume that $\sup f \leq 1$. Indeed,
for a general function $f \in C^1_b(\Rd)$ with positive infimum, the claim follows applying \eqref{Log_Sob} to the function $g=\frac{f}{\Vert f\Vert_\infty}$. Having $\sup f \leq 1$, we get $(G(t,s)f^p)(x)\log((G(t,s)f^p)(x))$ $\leq 0$ for $t>s$ and $x\in \R^d$, and this will be useful to control one of the additional terms coming from the cut-off functions.

So, let  $f \in C^1_b(\Rd)$  be such that $0<\delta \leq f(x)\leq 1$ for each $x\in \R^d$, and fix $s\in I$, $p>1$. Then,  \eqref{rep_nucleo} implies
$(G(t,s)f^p)(x)\in [\delta^p,1]$ for every $t\ge s$ and $x \in \Rd$.

The above mentioned cut-off functions are  standard. We fix
$\eta \in C^\infty(\R)$ satisfying $\chi_{(-\infty,1]}\leq \eta \leq \chi_{(-\infty,2]}$
and we set
\begin{equation}
\label{thetan}
\theta_n(x)=\eta\left(\frac{|x|}{n}\right), \qquad\;\,x\in \R^d, \;\,n\in \N,
\end{equation}
\begin{eqnarray*}
F_n(t)= \int_{\Rd}\theta_n(x)(G(t,s)f^p)(x)\log((G(t,s)f^p)(x))\mu_t(dx),\qquad\;\,t\ge s.
\end{eqnarray*}
For every $t\geq s$, $F_n(t)$ converges to
$F(t)$ as $n\to +\infty$, by dominated convergence. Moreover,  the function $(t,x)\mapsto \theta_n(x)(G(t,s)f^p)(x)\log((G(t,s)f^p)(x))$ is continuous and bounded in $[s, +\infty)\times\Rd$, and it satisfies the hypotheses of Lemma \ref{derivative}(ii) for any interval $[a,b]\subset (s, +\infty)$. Then,
   Lemma \ref{density_mus} and   Lemma
 \ref{derivative}(ii) yield   that  $F_n$ is continuous in $[s, +\infty)$ and  differentiable in $(s, +\infty)$, respectively. After
a long but straightforward computation, we get
\begin{align}
\frac{d}{dt}F_n(t)= & -\int_{\Rd}\theta_n \frac{\langle Q(t)\nabla_x G(t,s)f^p,
\nabla_x G(t,s)f^p\rangle}{G(t,s)f^p}\mu_t(dx)\notag\\
&-\int_{\Rd}(G(t,s)f^p)\log (G(t,s)f^p){\rm Tr}(Q(t)D^2 \theta_n)\mu_t(dx)\notag\\
&-\int_{\Rd}(G(t,s)f^p)\log (G(t,s)f^p) \langle b(t,\cdot), \nabla \theta_n\rangle \,\mu_t(dx)\notag\\
&- 2\int_{\Rd}\langle Q(t)\nabla \theta_n, \nabla_x G(t,s)f^p\rangle(\log(G(t,s)f^p)+ 1)\mu_t(dx)\notag\\[1mm]
=: & I_1(t,n) + I_2(t, n) + I_3(t, n) + I_4(t, n), \qquad\;\, t>s.
\label{natale}
\end{align}
Then, since $I_k(\cdot,n)$ $(k=1,\ldots,4)$ are bounded in $(s,t)$ for any $t>s$
and $F_n$ is continuous in $[s,+\infty)$,
\begin{equation}
F_n(t) - F_n(s) =  \sum_{k=1}^4 \int_s^tI_k(\sigma, n)\,d\sigma, \qquad\;\, t\geq s.
\label{inequalityn}
\end{equation}
We claim that
\begin{equation}
F(t)-F(s)\ge  \int_{s}^{t} g(\sigma) d\sigma,\qquad\;\,t\geq s,
\label{inequality}
\end{equation}
where
\begin{eqnarray*}
g(\sigma)= - \int_{\R^d} \frac{\langle Q(\sigma)\nabla_x G(\sigma,s)f^p,\nabla_x G(\sigma, s)f^p\rangle}{G(\sigma,s)f^p}\mu_{\sigma}(dx) ,\qquad\;\,\sigma\geq s.
\end{eqnarray*}
By \eqref{grad_est_punt} we have
\begin{align*}
|I_1(\sigma,n)-g(\sigma)|=&\left|\int_{\Rd}(\theta_n-1) \frac{\langle Q(\sigma)\nabla_x G(\sigma,s)f^p, \nabla_x G(\sigma,s)f^p\rangle}{G(\sigma,s)f^p}\mu_\sigma(dx)\right|\\
\leq &\Lambda \int_{\Rd}|\theta_n-1|\frac{|\nabla_x G(\sigma,s)f^p|^2}{G(\sigma,s)f^p}\mu_\sigma(dx)\\
\leq &\Lambda e^{2r_0(\sigma-s)}\int_{\Rd}|\theta_n-1|\frac{(G(\sigma,s)|\nabla f^p|)^2}{G(\sigma,s)f^p}\mu_\sigma(dx),
\end{align*}
for every  $\sigma\geq s$ and  $n\in\N$. The H\"older inequality and formula \eqref{rep_nucleo} imply
\begin{eqnarray*}
G(\sigma,s)|\nabla f^p|\leq \left(G(\sigma,s)\frac{|\nabla f^p|^2}{f^p}\right)^{1/2}(G(\sigma,s)f^p)^{1/2},\qquad\;\, \sigma\geq s.
\end{eqnarray*}
Thus,
\begin{align*}
|I_1(\sigma,n)-g(\sigma)|\le &\Lambda e^{2r_0(\sigma-s)}\int_{\Rd}|\theta_n-1|G(\sigma,s)\left(\frac{|\nabla f^p|^2}{f^p}\right)\mu_\sigma(dx)\\
\le &\Lambda e^{2r_0(\sigma-s)}\left\Vert\frac{|\nabla f^p|^2}{f^p}\right\Vert_\infty\int_{\Rd}|\theta_n-1|\mu_\sigma(dx),
\end{align*}
and consequently $\lim_{n\to +\infty} I_1(\sigma,n)=g(\sigma)$ for every  $\sigma\geq s$.
Moreover, $|I_1(\sigma,n)|\leq \Lambda p^2\Vert f^{p-2}|\nabla f|^2\Vert_\infty$. Integrating between $s$ and $t$, by dominated convergence we obtain
\begin{equation}
\label{I1}
\lim_{n\to +\infty}  \int_s^t I_1(\sigma,n)d\sigma =  \int_s^t g(\sigma)d\sigma ,\qquad\;\,t\geq s.
\end{equation}
Let us consider $I_2(\cdot , n)$. For $\sigma  \in I$ and $x\in \R^d$ we have
\begin{align*}
{\rm Tr}(Q(\sigma)D^2\theta_n(x)) =&\eta''\left(\frac{|x|}{n}\right)\frac{\langle Q(\sigma)x,x \rangle}{n^2|x|^2}
+\eta'\left(\frac{|x|}{n}\right)\frac{\textrm{Tr}(Q(\sigma))}{n |x|}\\
&- \eta'\left(\frac{|x|}{n}\right)\frac{\langle Q(\sigma)x,x \rangle}{n |x|^3 }.
\end{align*}
Recalling that   the supports of $\eta'$ and $\eta''$ are contained in $[1,2]$, we get
\begin{eqnarray*}
|{\rm Tr}(Q(\sigma)D^2\theta_n(x))| \leq   \frac{C_1}{n^2},\qquad\;\, \sigma \in I,
\end{eqnarray*}
where $C_1= d \Lambda(2\Vert \eta'\Vert_\infty+\Vert \eta''\Vert_\infty)$. Then,
\begin{eqnarray*}
|I_2(\sigma,n)|\le p|\log(\delta)| \frac{C_1}{n^2} ,\qquad\;\,\sigma\geq s,\;\,n\in\N,
\end{eqnarray*}
which implies
\begin{equation}
\label{I2}
\lim_{n\to +\infty}  \int_s^t I_2(\sigma,n)d\sigma =  0 ,\qquad\;\,t\geq s.
\end{equation}
Fix now $T>s$ and consider $I_3(\sigma, n)$ for $s\leq \sigma\leq T$. Again, since the support of $\eta'$ is contained in $[1, 2]$ and $\eta' \leq 0$, for every $x\in \R^d$ we have
\begin{equation}
\label{b0}
\langle b(\sigma,x),\nabla \theta_n(x)\rangle =  \eta'\left(\frac{|x|}{n}\right)\frac{\langle b(\sigma,x),x\rangle}{n|x|}
\geq  \eta'\left(\frac{|x|}{n}\right)\frac{C_{s,T}}{n^2} ,
\end{equation}
where $C_{s,T}$ is the constant in \eqref{inequality-b}. Recalling that   $ - (G(\sigma,s)f^p)\log (G(\sigma,s)f^p) \geq 0$ because $f^p\leq 1$,  we obtain
\begin{eqnarray*}
I_3(\sigma, n) \geq  - \int_{\R^d} (G(\sigma,s)f^p)\log (G(\sigma,s)f^p) \eta'\left(\frac{|x|}{n}\right)\frac{C_{s,T}}{n^2} \mu_\sigma(dx),
\qquad\;\, s\leq \sigma\leq T.
\end{eqnarray*}
On the other hand,
\begin{eqnarray*}
\bigg|  (G(\sigma,s)f^p)\log (G(\sigma,s)f^p) \eta'\left(\frac{|x|}{n}\right)\frac{C_{s,T}}{n^2}  \bigg|
 \leq p|\log \delta|   \frac{ C_{s, T}\|\eta '\|_{\infty}}{n^2} := \frac{C_2}{n^2},
 \end{eqnarray*}
and therefore
\begin{eqnarray*}
\liminf_{n\to +\infty}  \int_s^t I_3(\sigma,n)d\sigma \geq   0 ,\qquad\;\,s\leq t\leq T,
\end{eqnarray*}
and since $T$ is arbitrary,
\begin{equation}
\label{I3}
\liminf_{n\to +\infty}  \int_s^t I_3(\sigma,n)d\sigma \geq   0,\qquad\;\,t\geq s.
\end{equation}
$I_4(\cdot,n)$ tends to $0$ as $n\to +\infty$, uniformly in $[s, +\infty)$, since
  \eqref{grad_est_punt} yields
\begin{align*}
|I_4(\sigma ,n)|\leq &
2\Lambda \frac{\Vert \eta'\Vert_\infty}{n}e^{r_0(\sigma-s)}\int_{\Rd}(G(\sigma ,s)|\nabla f^p|)|\log(G(\sigma,s)f^p)+ 1|\mu_{\sigma}(dx)\\
\le & 2\Lambda\frac{\Vert \eta'\Vert_\infty}{n}
\Vert\,|\nabla f^p|\,\Vert_\infty\left(p|\log(\delta)|+1\right),
\end{align*}
for every $\sigma \geq s$. Hence,
\begin{equation}
\label{I4}
\lim_{n\to +\infty}  \int_s^t I_4(\sigma,n)d\sigma =   0,\qquad\;\,t\geq s.
\end{equation}

Taking into account \eqref{I1}, \eqref{I2}, \eqref{I3}, \eqref{I4} and letting $n\to +\infty$ in \eqref{inequalityn}, formula \eqref{inequality} follows.
Now, since
\begin{align*}
\int_{\Rd}\frac{\langle Q(\sigma)\nabla_x G(\sigma,s)f^p, \nabla_x G(\sigma,s)f^p\rangle}{G(\sigma,s)f^p}\mu_\sigma(dx)&
\leq \Lambda e^{2r_0(\sigma-s)}\int_{\Rd}G(\sigma,s)\frac{|\nabla f^p|^2}{f^p}\mu_\sigma(dx)\\
& =\Lambda p^2 e^{2r_0(\sigma-s)} \int_{\Rd} f^{p-2}|\nabla f|^2 \mu_s(dx),
\end{align*}
we get, for $t\geq s$,
\begin{align*}
F(t)-F(s)&\geq -\Lambda p^2 \int_{\Rd} f^{p-2}|\nabla f|^2 \mu_s(dx) \int_{s}^{t} e^{2r_0(\sigma-s)}d\sigma\nonumber\\
&=  \frac{\Lambda p^2}{2r_0}\left(1 - e^{2r_0(t-s)} \right) \int_{\Rd} f^{p-2}|\nabla f|^2 \mu_s(dx).
\end{align*}
Letting $t\to +\infty$ and recalling  \eqref{inf_fp} yields
\begin{eqnarray*}
m_s(f^p)  \log (m_s(f^p) )-F(s)\geq
\frac{\Lambda p^2}{2r_0}\int_{\Rd} f^{p-2}|\nabla f|^2 \mu_s(dx),
\end{eqnarray*}
that is,
\begin{eqnarray*}
\int_{\Rd} f^p\log f^p \,\mu_s(dx)\leq m_s(f^p) \log(m_s(f^p))+
\frac{\Lambda p^2}{2|r_0|}\int_{\Rd} f^{p-2}|\nabla f|^2 \mu_s(dx),
\end{eqnarray*}
which coincides with \eqref{Log_Sob} in our case.

\vspace{3mm}

{\em Step 2.}
Let now $f \in C^1_b(\Rd)$ and define
$f_n:=(f^2+n^{-1})^{1/2}$.
By the first part of the proof we have
\begin{equation}
\label{Log_Sob_n}
\int_{\Rd} f_n^p\log(f_n^p) \,\mu_s(dx)\leq m_s(f_n^p)\log(m_s(f_n^p))+
\frac{\Lambda p^2}{2 |r_0|}\int_{\Rd}f_n^{p-2}|\nabla f_n|^2\mu_s(dx),
\end{equation}
for any $n \in \N$ and $s \in I$.
Since $0< f_n^p \leq \Vert f^2+1\Vert_{\infty}^{p/2}$,  the left-hand
side of \eqref{Log_Sob_n} converges to $\int_{\Rd} |f|^p\log |f|^p \,\mu_s(dx)$.
Similarly,
by   dominated convergence   we obtain
\begin{eqnarray*}
\lim_{n \to +\infty}m_s(f_n^p) \log(m_s(f_n^p))=
m_s(|f|^p)\log(m_s(|f|^p)).
\end{eqnarray*}
Observe that
$|\nabla f_n|^2\le |\nabla f|^2$ for any $n\in\N$; by the monotone convergence theorem, if $p<2$, and by dominated convergence, otherwise, we   get
\begin{eqnarray*}
\lim_{n \to +\infty}\int_{\Rd}f_n^{p-2}|\nabla f_n|^2\mu_s(dx)=
\int_{\Rd}|f|^{p-2}|\nabla f|^2 \chi_{\{f \neq 0\}}\mu_s(dx),
\end{eqnarray*}
and the statement follows letting $n \to +\infty$ in \eqref{Log_Sob_n}.
\end{proof}

The logarithmic Sobolev inequality \eqref{Log_Sob} yields some compactness results.

\begin{thm}\label{prop-compatt}
Fix $s\in I$. Then:
\begin{enumerate}[\rm (i)]
\item
$W^{1,p}(\Rd,\mu_s)$ is compactly embedded in $L^p(\R^d,\mu_s)$ for any $p\in [2,+\infty)$;
\item
for any $t>s$ and $p\in (1,+\infty)$, the operator $G(t,s):L^p(\Rd,\mu_s)\to L^p(\Rd, \mu_t)$ is compact.
\end{enumerate}
\end{thm}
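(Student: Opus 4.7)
The plan is to prove (i) as a self-contained consequence of the LSI \eqref{Log_Sob}, and then to derive (ii) by combining the gradient estimate \eqref{grad_est_norm_p} with (i), adding an interpolation step to handle the subquadratic range.

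For (i), fix $p\ge 2$ and let $(f_n)\subset W^{1,p}(\R^d,\mu_s)$ be a bounded sequence. By a density argument based on Lemma \ref{density}, the LSI \eqref{Log_Sob} applies to each $f_n$, giving
\begin{equation*}
\int_{\R^d}|f_n|^p\log|f_n|\,d\mu_s\le\frac{1}{p}m_s(|f_n|^p)\log m_s(|f_n|^p)+\frac{p\Lambda}{2|r_0|}\int_{\R^d}|f_n|^{p-2}|\nabla f_n|^2\chi_{\{f_n\neq 0\}}\,d\mu_s.
\end{equation*}
H\"older's inequality with conjugate exponents $p/(p-2)$ and $p/2$, admissible exactly when $p\ge 2$, bounds the last integral by $\|f_n\|_{L^p(\R^d,\mu_s)}^{p-2}\,\|\,|\nabla f_n|\,\|_{L^p(\R^d,\mu_s)}^2$, and the middle term is bounded via $\|f_n\|_{L^p(\R^d,\mu_s)}$; hence $\sup_n\int_{\R^d}|f_n|^p\log(e+|f_n|^p)\,d\mu_s<+\infty$, so by the de la Vall\'ee-Poussin criterion $\{|f_n|^p\}$ is uniformly $\mu_s$-integrable. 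For local compactness, Lemma \ref{density_mus} ensures that $\rho(s,\cdot)$ is continuous and strictly positive on every ball $B(0,R)$, so the norms of $W^{1,p}(B(0,R),\mu_s)$ and $W^{1,p}(B(0,R),dx)$ are equivalent; Rellich-Kondrachov together with a diagonal extraction then produces a subsequence $(f_{n_j})$ converging $\mu_s$-a.e.\ and in $L^p(B(0,R),\mu_s)$ for every $R\in\N$. Since $\mu_s$ is a probability measure, $\mu_s(\R^d\setminus B(0,R))\to 0$ as $R\to+\infty$, and uniform integrability together with Vitali's theorem upgrade the local convergence to convergence in $L^p(\R^d,\mu_s)$.

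For (ii) with $p\ge 2$, the estimate \eqref{grad_est_norm_p} combined with the contraction \eqref{contraz} shows that $G(t,s)$ maps bounded sets of $L^p(\R^d,\mu_s)$ into bounded sets of $W^{1,p}(\R^d,\mu_t)$, and part (i) applied at time $t$ gives the desired compactness. For $p\in(1,2)$ the H\"older step above fails, so I would proceed by interpolation: the previous case yields compactness of $G(t,s):L^2(\R^d,\mu_s)\to L^2(\R^d,\mu_t)$, while \eqref{contraz} gives boundedness of $G(t,s):L^1(\R^d,\mu_s)\to L^1(\R^d,\mu_t)$. Cwikel's theorem on compact operators between Banach couples then delivers compactness of $G(t,s):L^p(\R^d,\mu_s)\to L^p(\R^d,\mu_t)$ for every $p\in(1,2)$, since $[L^2(\mu),L^1(\mu)]_\theta=L^{2/(1+\theta)}(\mu)$ covers the whole range as $\theta$ varies in $(0,1)$.

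The main obstacle is the H\"older estimate on $\int|f|^{p-2}|\nabla f|^2\,d\mu_s$ in the LSI: the exponent $p/(p-2)$ becomes negative for $p<2$, which simultaneously restricts part (i) to $p\ge 2$ and forces the interpolation detour in (ii). Once uniform integrability has been secured, the rest of the argument is a routine Rellich-plus-tightness diagonal construction.
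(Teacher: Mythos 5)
Your proof is correct and follows essentially the same route as the paper's. For (i) the paper also combines the LSI with H\"older's inequality (the step that forces $p\ge 2$) to bound $\int_{\R^d}|f|^p\log|f|\,d\mu_s$ over a ball of $W^{1,p}(\R^d,\mu_s)$, and then controls the tail $\int_{\R^d\setminus B(0,R)}|f|^p\,d\mu_s$ directly by splitting along the level set $\{|f|\le k\}$, concluding with Rellich--Kondrachov and a total-boundedness argument; your de la Vall\'ee-Poussin / Vitali packaging of the tail control is a cosmetic variant of the same idea. For (ii) the arguments coincide: compose \eqref{grad_est_norm_p} with the compact embedding from (i) for $p\ge 2$, then interpolate compactness against the $L^1$ contraction \eqref{contraz} for $p\in(1,2)$---the paper cites Davies' Theorem~1.6.1 where you cite Cwikel, but both yield exactly the required $L^p$ compactness interpolation.
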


\begin{proof}
(i) Let $\mu$ be a Borel measure in $\R^d$ such that for every $R>0$ and $p\geq 2$,
$L^p(B(0,R),\mu)=L^p(B(0,R),dx)$ with equivalence of the corresponding norms (which is true for our measures $\mu_s$).
It is known that the occurrence of a logarithmic Sobolev inequality for  $\mu$  yields compactness of the embedding $W^{1,2}(\Rd,\mu)\subset L^2(\Rd,\mu)$, see e.g., \cite{LMP}. The proof for $p\geq 2$ is not much different, we write it here for the reader's convenience.

Let ${\mathcal B}$ be a ball in $W^{1,p}(\Rd,\mu_s)$. We prove that ${\mathcal B}$ is totally bounded in $L^p(\R^d,\mu_s)$. For this purpose, we fix
$\varepsilon>0$ and claim that there exists $R>0$ such that
\begin{equation}
\|f\|_{L^p(\R^d\setminus B(0,R),\mu_s)}\le \frac{\varepsilon}{2},\qquad\;\,f\in \mathcal{B}.
\label{A}
\end{equation}
For any fixed $f\in {\mathcal B}$ and $k\in\N$ we introduce the set $E_k=\{x\in\Rd: |f(x)|\le k\}$. Then,
\begin{align*}
\int_{\Rd\setminus B(0,R)}|f|^pd\mu_s=&\int_{E_k\cap (\Rd\setminus B(0,R))}|f|^pd\mu_s
+\int_{\Rd\setminus (B(0,R)\cup E_k)}|f|^pd\mu_s\\
\le & k^p\mu_s(\Rd\setminus B(0,R))
+\frac{1}{\log(k)}\int_{\Rd}|f|^p\log(|f|)d\mu_s.
\end{align*}
By the logarithmic Sobolev inequality \eqref{Log_Sob} (which can be extended to any
$g\in W^{1,p}(\R^d,\mu_s)$ by Lemma \ref{density}, since $p\ge 2$) and the H\"older inequality  we obtain
\begin{align*}
\int_{\Rd}|f|^p\log(|f|)d\mu_s
\le & \|f\|_{L^p(\Rd,\mu_s)}^p\log(\|f\|_{L^p(\Rd,\mu_s)})\\
&+\frac{p\Lambda}{2|r_0|}
\|f\|_{L^p(\Rd,\mu_s)}^{p-2}\|\,|\nabla f|\,\|_{L^p(\Rd,\mu_s)}^2\\
\le & K,
\end{align*}
for some $K>0$. Therefore,
\begin{eqnarray*}
\int_{\Rd\setminus B(0,R)}|f|^pd\mu_s
\le   k^p\mu_s(\Rd\setminus B(0,R))
+\frac{K}{\log k },\qquad\;\,f\in {\mathcal B}.
\end{eqnarray*}
The claim follows choosing properly $k$ and $R$.

By Lemma \ref{density_mus} the density of $\mu_s$ with respect to the Lebesgue measure is a continuous and  positive function. Since, as we have already recalled the spaces $L^p(B(0,R),\mu_s)$ and $L^p(B(0,R),dx)$ (and, hence, the spaces
$W^{1,p}(B(0,R),\mu_s)$ and $W^{1,p}(B(0,R),dx)$) coincide, and the corresponding norms are equivalent,
by the Rellich-Kondrachov theorem there exists a finite number of functions $f_1,\ldots,f_m$ in $L^p(B(0,R),\mu_s)$ such that
\begin{eqnarray*}
\mathcal{B}_{|B(0,R)}\subset \bigcup_{j=1}^m\left\{f\in L^p(B(0,R),\mu_s): \|f-f_j\|_{L^p(B(0,R),\mu_s)}\le \frac{\varepsilon}{2}\right\},
\end{eqnarray*}
where $\mathcal{B}_{|B(0,R)}$ denotes the set of   the restrictions to $B(0,R)$ of the functions in $\mathcal{B}$.
Using \eqref{A} it is easy to check that $\mathcal{B}$ is contained in the union of the closed balls in $L^p(\Rd,\mu_s)$ centered at $\widetilde{f_j}$, with radius $\varepsilon$, where $\widetilde{f_j}$ denotes the trivial extension of $f_j$ to the whole of $\Rd$.

\vspace{3mm}

(ii) The proof follows by interpolation. Indeed, by estimate \eqref{grad_est_norm_p}, $G(t,s)$ maps $L^p(\Rd,\mu_s)$ to $W^{1,p}(\Rd,\mu_t)$ and $W^{1,p}(\Rd,\mu_t)$ is compactly embedded in $L^p(\Rd,\mu_t)$, for every $p \ge 2$. Hence, $G(t,s)$ is a compact operator from $L^p(\Rd,\mu_s)$ to $L^p(\Rd,\mu_t)$ for every $p \ge 2$. Moreover, $G(t,s)$ is also a linear bounded operator from $L^1(\Rd,\mu_s)$ to $L^1(\Rd,\mu_t)$. Then, the claim follows arguing as in \cite[Thm. 1.6.1]{Davies} with $A=G(t,s)$ and with obvious modifications.
\end{proof}

\section{Hypercontractivity of $G(t,s)$}\label{sec_hyper}

The LSI inequality is the main tool in the proof of the following hypercontractivity theorem.

\begin{thm}\label{thm_fin}
Let $s \in I$,
$p,q\in (1,+\infty)$ with $p\leq e^{2\eta_0 |r_0|\Lambda^{-1}(t-s)}(q-1)+1$.
Then,  $G(t,s)$ maps $L^q(\Rd,\mu_s)$ to $L^{p}(\Rd, \mu_t)$
for every $t>s$ and
\begin{equation}
\Vert G(t,s)f \Vert_{L^{p}(\Rd,\mu_t)}\leq \Vert f\Vert_{L^q(\Rd,\mu_s)}, \qquad\;\, t>s, \;\, f \in L^q(\Rd,\mu_s).
\label{hyper-estim}
\end{equation}
\end{thm}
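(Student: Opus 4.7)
The plan is to adapt Gross's hypercontractivity argument \cite{Gro75Log} to this nonautonomous setting, along the lines of \cite{GeisLun09}. First, by the contraction property \eqref{contraz}, the density of $C^\infty_c(\Rd)$ in $L^q(\Rd,\mu_s)$ from Lemma \ref{density}, and the monotone/dominated convergence argument used in Step 2 of the proof of Theorem \ref{thm_log_sob} applied to $(f^2+n^{-1})^{1/2}$, reduce to the case $f\in C^1_b(\Rd)$ with $0<\delta\leq f\leq \|f\|_\infty$. Then $u(t,x):=(G(t,s)f)(x)$ is smooth for $t>s$ and satisfies $\delta\leq u(t,x)\leq \|f\|_\infty$ pointwise. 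Introduce the curve
\[
p(t):=(q-1)\,e^{2\eta_0|r_0|\Lambda^{-1}(t-s)}+1,\qquad t\geq s,
\]
characterized by $p(s)=q$ and $\Lambda p'(t)=2\eta_0|r_0|(p(t)-1)$, and set $N(t):=\|u(t,\cdot)\|_{L^{p(t)}(\Rd,\mu_t)}$. Since $\mu_t$ is a probability measure, it suffices to show $N'(t)\leq 0$ on $(s,+\infty)$: this gives \eqref{hyper-estim} for $p=p(t)$, and the full statement for every $p\leq p(t)$ follows from $\|\cdot\|_{L^p(\mu_t)}\leq \|\cdot\|_{L^{p(t)}(\mu_t)}$.

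To compute $N'(t)$, set $F(t):=N(t)^{p(t)}=\int_{\Rd}u^{p(t)}d\mu_t$. Formally applying Lemma \ref{derivative}(ii) to $u^{p(t)}$, using $D_tu=\mathcal{A}(t)u$ and the identity $\mathcal{A}(t)(u^{p})=p\,u^{p-1}\mathcal{A}(t)u+p(p-1)\,u^{p-2}\langle Q(t)\nabla u,\nabla u\rangle$, the $u^{p-1}\mathcal{A}(t)u$ terms cancel, and one obtains
\[
F'(t)=p'(t)\int_{\Rd}u^{p}\log u\,d\mu_t-p(p-1)\int_{\Rd}u^{p-2}\langle Q(t)\nabla u,\nabla u\rangle\,d\mu_t,
\]
with $p=p(t)$. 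The requirement $N'(t)\leq 0$ rearranges into
\[
p'(t)\Big(\int_{\Rd}u^{p}\log u^{p}\,d\mu_t-F\log F\Big)\leq p^2(p-1)\int_{\Rd}u^{p-2}\langle Q(t)\nabla u,\nabla u\rangle\,d\mu_t.
\]
Applying Theorem \ref{thm_log_sob} with exponent $2$ to $u^{p(t)/2}\in W^{1,2}(\Rd,\mu_t)$ dominates the left-hand side by $\tfrac{\Lambda p^2 p'(t)}{2|r_0|}\int u^{p-2}|\nabla u|^2d\mu_t$, while \eqref{ell} gives the lower bound $\eta_0 p^2(p-1)\int u^{p-2}|\nabla u|^2 d\mu_t$ for the right-hand side. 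The inequality therefore reduces to $\Lambda p'(t)/(2|r_0|)\leq \eta_0(p(t)-1)$, which is precisely the ODE satisfied by $p(t)$ (with equality), and the monotonicity of $N$ follows.

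The main obstacle is making the differentiation of $F(t)$ rigorous: Lemma \ref{derivative}(ii) applies only to integrands constant outside a compact set, whereas $u^{p(t)}$ never is. Following Step 1 of the proof of Theorem \ref{thm_log_sob}, one replaces $F$ by $F_n(t):=\int_{\Rd}\theta_n u^{p(t)}\,d\mu_t$, where $\theta_n$ is defined in \eqref{thetan}, and passes to the limit $n\to+\infty$ at the level of the integrated identity $F_n(t)-F_n(s)=\int_s^tF_n'(\sigma)d\sigma$. The cut-off generates three additional terms analogous to $I_2, I_3, I_4$ in \eqref{natale}, involving $\mathrm{Tr}(Q(t)D^2\theta_n)$, $\langle b(t,\cdot),\nabla\theta_n\rangle$, and $\langle Q(t)\nabla\theta_n,\nabla u\rangle$. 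The first and third are dominated by factors of order $n^{-2}$ and $n^{-1}$, respectively, using \eqref{ell}, the pointwise gradient estimate \eqref{grad_est_punt}, and the uniform bound $\delta\leq u\leq \|f\|_\infty$. The drift term is handled by the sign analysis used for $I_3$ in the LSI proof: combining $\eta'\leq 0$, the one-sided bound \eqref{inequality-b} on $[s,T]$, and the positivity and boundedness of $u^{p(t)}$ yields a lower bound which, together with the vanishing of the other cut-off contributions, suffices to recover $N'(t)\leq 0$ in the limit $n\to+\infty$. With $N$ shown to be non-increasing, the estimate $N(t)\leq N(s)=\|f\|_{L^q(\Rd,\mu_s)}$ combined with the approximation procedure described above completes the proof.
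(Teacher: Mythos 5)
Your proposal is correct and follows essentially the same route as the paper's proof: the same curve $p(t)$ solving $\Lambda p'=2\eta_0|r_0|(p-1)$, the same reduction via $(f^2+n^{-1})^{1/2}$ and density, the same differentiation of the $L^{p(t)}(\mu_t)$-norm using Lemma \ref{derivative}(ii) with the cut-off functions $\theta_n$, the same treatment of the three cut-off error terms (with the sign analysis of $\langle b,\nabla\theta_n\rangle$ via \eqref{inequality-b}), and the same application of the LSI to kill the limiting derivative. The only cosmetic difference is that you invoke \eqref{Log_Sob} with exponent $2$ for $u^{p(t)/2}$ rather than with exponent $p(t)$ for $u$, which yields an identical bound after the chain rule.
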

\begin{proof} The proof is in two steps.  In the first step we show that \eqref{hyper-estim}
holds for every positive $f\in C^{\infty}_b(\Rd)$, which is constant outside a compact subset of $\Rd$. In the second step we extend \eqref{hyper-estim} to all $f\in L^q(\Rd,\mu_s)$.

\vspace{3mm}

{\em Step 1.} Let $f\in C^{\infty}_b(\Rd)$ be  constant outside a compact subset of $\Rd$ and have positive values.
Fix $q>1$, $s\in I$ and set  $p(t):=e^{2\eta_0 |r_0|\Lambda^{-1}(t-s)}(q-1)+1$. Our aim is to show that
the function
\begin{eqnarray*}
\beta(t):= \Vert G(t,s)f\Vert_{L^{p(t)}(\Rd,\mu_t)},\qquad\;\,t\ge s,
\end{eqnarray*}
is decreasing.  This will imply $\|G(t,s)f \|_{L^{p(t)}(\Rd,\mu_t)}\leq \|f\|_{L^q(\Rd,\mu_s)}$, and for $p<p(t)$ \eqref{hyper-estim} will follow from the H\"older inequality.

Let $\theta_n$ be the cut-off functions defined in \eqref{thetan}.  By \cite[Thm. 2.2, Step 1]{KunLorLun09Non}, $(t,x)\mapsto (G(t,s)f)(x)\in C^{1,2}([s, +\infty)\times \R^d)$. Then,  Lemma \ref{derivative}(ii) yields that  the function
\begin{eqnarray*}
t\mapsto \int_{\Rd} \theta_n (G(t,s)f)^{p(t)} \mu_t(dx), \qquad\;\, t\geq s,
\end{eqnarray*}
is differentiable in $[s, +\infty)$ for every $n\in \N$, with derivative given by
\begin{align}
&p'(t)
\int_{\Rd} \theta (G(t,s)f)^{p(t)}\log (G(t,s)f)\mu_t(dx)\notag\\
&- p(t)(p(t)-1)\int_{\Rd} \theta (G(t,s)f)^{p(t)-2}
\langle Q(t)\nabla_x G(t,s)f,\nabla_x G(t,s)f\rangle \mu_t(dx)\notag\\
&- 2 \int_{\Rd} \langle Q(t) \nabla_x ((G(t,s)f)^{p(t)}), \nabla \theta\rangle \mu_t(dx)
-\int_{\Rd} (G(t,s)f)^{p(t)}\mathcal{A}(t)\theta \mu_t(dx).
\label{parziale}
\end{align}
Let us define the functions $\beta_n$ by
\begin{eqnarray*}
\beta_n(t):=\left(\int_{\Rd}\theta_n(x)((G(t,s)f)(x))^{p(t)}\mu_t(dx)\right)^{\frac{1}{p(t)}},\qquad\;\,t\in [s,+\infty).
\end{eqnarray*}
Then,
\begin{align*}
\beta'_n(t)=  \beta_n(t)
\bigg [&-\frac{p'(t)}{(p(t))^2}\log\left(\int_{\Rd}\theta_n(x)((G(t,s)f)(x))^{p(t)}\mu_t(dx)\right)\nonumber\\
&+\frac{1}{p(t)}\beta_n(t)^{-p(t)}
\frac{d}{dt}\left(\int_{\Rd}\theta_n(x)((G(t,s)f)(x))^{p(t)}\mu_t(dx)\right) \bigg ]. \nonumber
\end{align*}
Replacing \eqref{parziale} we get
\begin{align*}
\beta'_n(t)  = & \beta_n(t)\bigg\{-\frac{p'(t)}{(p(t))^2}\log\left (\int_{\Rd}\theta_n (G(t,s)f)^{p(t)}\mu_t(dx)\right)\\
&+\frac{1}{p(t)(\beta_n(t))^{p(t)}}\bigg [p'(t)\int_{\Rd}\theta_n(G(t,s)f)^{p(t)}\log (G(t,s)f)\mu_t(dx)\\
&- p(t)(p(t)-1)\int_{\Rd}\theta_n(G(t,s)f)^{p(t)-2}\langle Q(t)\nabla_x G(t,s)f,\nabla_x G(t,s)f \rangle\mu_t(dx)\\
&-2\int_{\Rd}\langle Q(t)\nabla_x((G(t,s)f)^{p(t)}),\nabla\theta_n\rangle\mu_t(dx)\\
&-\int_{\Rd}(G(t,s)f)^{p(t)}{\mathcal A}(t)\theta_n\mu_t(dx)\bigg ] \bigg\} .
\end{align*}

Let us  fix $T>s$ and note that by \eqref{b0}  we have
\begin{eqnarray*}
(\mathcal{A}(t)\theta_n)(x) \ge {\rm Tr}(Q(t)D^2\theta_n(x)) + \eta'\left(\frac{|x|}{n}\right)\frac{C_{s,T}}{n^2}, \qquad\;\, s\leq t\leq T, \;\,x\in \R^d.
\end{eqnarray*}
Hence,  for $s\leq t\leq T$,
\begin{align*}
\beta'_n&(t)\le  \gamma_n(t):=\beta_n(t)\bigg\{ -\frac{p'(t)}{p^2(t)}\log\left (\int_{\Rd}\theta_n(G(t,s)f)^{p(t)}\mu_t(dx)\right )\nonumber\\
&+\frac{1}{p(t)(\beta_n(t))^{p(t)}}\bigg [p'(t)\int_{\Rd}\theta_n(G(t,s)f)^{p(t)}\log (G(t,s)f)\mu_t(dx)\\
&- p(t)(p(t)-1)\int_{\Rd}\theta_n(G(t,s)f)^{p(t)-2}\langle Q(t)\nabla_x G(t,s)f,\nabla_x G(t,s)f \rangle \mu_t(dx)\\
&-2\int_{\Rd}\langle Q(t)\nabla_x((G(t,s)f)^{p(t)}),\nabla\theta_n\rangle\mu_t(dx)\\
&-\int_{\Rd}((G(t,s)f )(x))^{p(t)}\left ({\rm Tr}(Q(t)D^2\theta_n(x))
+ \eta'\left(\frac{|x|}{n}\right)\frac{C_{s,T}}{n^2}\right )\mu_t(dx)\bigg ] \bigg\}.
\end{align*}
Therefore, for   $s\le t_1<t_2\le T$ we have
\begin{equation}
\beta_n(t_2)-\beta_n(t_1)\le\int_{t_1}^{t_2}\gamma_n(s)ds.
\label{form-1}
\end{equation}
Our claim will be proved letting $n\to +\infty$ in \eqref{form-1}. To this aim we note that
\begin{align*}
|\beta_n(t)-\beta(t)|\le &\|(\theta_n-1)G(t,s)f\|_{L^{p(t)}(\Rd,\mu_t)}
\le\|f\|_{\infty}\left (\int_{\Rd}|\theta_n-1|\mu_t(dx)\right )^{1/p(t)},
\end{align*}
which shows that $\beta_n(t)$ tends to $\beta(t)$ for   $t\in [s,T]$, as $n\to +\infty$. Moreover,
$|\beta_n(t)|\le \|f\|_{\infty}$ for   $t\geq s$ and   $n\in\N$.

Let us prove  that $\gamma_n$ converges pointwise in $[s,T]$ to the function $\gamma$ defined by
\begin{align*}
\gamma(t):=&
\beta(t)\bigg[-\frac{p'(t)}{p^2(t)}\log\left (\int_{\Rd}(G(t,s)f)^{p(t)}\mu_t(dx)\right )\nonumber\\
&+\frac{1}{p(t)(\beta(t))^{p(t)}}\bigg(p'(t)\int_{\Rd}(G(t,s)f)^{p(t)}\log (G(t,s)f)\mu_t(dx)\\
&- p(t)(p(t)-1)\int_{\Rd}(G(t,s)f)^{p(t)-2}\langle Q(t)\nabla_x G(t,s)f,\nabla_x G(t,s)f \rangle \mu_t(dx)\bigg) \bigg],
\end{align*}
and that there exists a positive constant $C_1$ such that $|\gamma_n(t)|\le C_1$ for each $t\in [s,T]$ and   $n\in\N$. We have to discuss convergence and estimates just for
\begin{eqnarray*}
I_{1,n}(t):=\int_{\Rd}\theta_n (G(t,s)f)^{p(t)}\log (G(t,s)f) \mu_t(dx)
\end{eqnarray*}
and
\begin{eqnarray*}
I_{2,n}(t):=\int_{\Rd}\theta_n(G(t,s)f)^{p(t)-2}\langle Q(t)\nabla_x G(t,s)f,\nabla_x G(t,s)f \rangle \mu_t(dx),
\end{eqnarray*}
since the other terms are easier to deal with: it is enough to recall that $\|\,|\nabla\theta_n|\,\|_{\infty}\le Cn^{-1}$ and $|{\rm Tr}(Q(t)D^2\theta_n(x))| \leq  C_1 n^{-2}$, $| \eta' (|x|/n) C_{s,T} n^{-2}| \leq \|\eta '\|_{\infty} C_{s,T} n^{-2}$,  as it has been already done in the proof of Theorem \ref{thm_log_sob}.

Concerning $I_{1,n}(t)$, we observe that
\begin{eqnarray*}
0< (G(t,s)f)^{p(t)}\log (G(t,s)f)\leq \max\{ \xi^{p(t)}\log \xi:\;\, t\in [s,T],\; 0 <\xi \leq \|f\|_{\infty}\} =: H,
\end{eqnarray*}
and hence
\begin{eqnarray*}
\left |I_{1,n}(t)
-\int_{\Rd}(G(t,s)f)^{p(t)}\log (G(t,s)f) \mu_t(dx)\right |
\le H\int_{\R^d}|\theta_n-1| \mu_t(dx).
\end{eqnarray*}
The right hand side  vanishes as $n\to +\infty$, and it is bounded by $H$ for every $t\in [s,T]$.

Next, we consider $I_{2,n}(t)$
and we note that
\begin{align}\label{i2n}
&\left |I_{2,n}(t)-\int_{\Rd}(G(t,s)f)^{p(t)-2}\langle Q(t)\nabla_x G(t,s)f,\nabla_x G(t,s)f \rangle \mu_t(dx)\right |\nonumber\\
\le &\Lambda\int_{\Rd}|\theta_n-1|(G(t,s)f)^{p(t)-2}|\nabla_xG(t,s)f|^2\mu_t(dx).
\end{align}
Using \eqref{grad_est_punt} and, if $p(t) <2$, the inequality $G(t,s)f\ge G(t,s)(\inf f)=\inf f>0$, that follows from
\eqref{rep_nucleo}, we easily deduce that
the right-hand side of \eqref{i2n} vanishes as $n\to +\infty$. Moreover,
\begin{eqnarray*}
|I_{2,n}(t)|\le\Lambda\int_{\Rd}(G(t,s)f)^{p(t)-2}|\nabla_xG(t,s)f|^2\mu_t(dx),\qquad\;\,t\in [s,T],\;\,n\in\N.
\end{eqnarray*}

Then, we may let  $n\to +\infty$ in \eqref{form-1} and conclude that for $s\le t_1< t_2\le T$ we have
\begin{eqnarray*}
\beta(t_2)-\beta(t_1)\le \int_{t_1}^{t_2}\gamma(s)ds,
\end{eqnarray*}
and since $T>s$ is arbitrary, the inequality holds for every  $s\le t_1< t_2$.

Applying the logarithmic Sobolev
inequality \eqref{Log_Sob} to the function $G(t,s)f$ we get, for $t>s$,
\begin{align*}
\gamma(t)&\leq  \beta(t)^{1-p(t)}\bigg(\frac{p'(t)\Lambda}{2|r_0|}\int_{\Rd}(G(t,s)f)^{p(t)-2}|\nabla_x G(t,s)f|^2 \mu_t(dx)\\
&\quad -(p(t)-1)\int_{\Rd}(G(t,s)f)^{p(t)-2}\langle Q(t)\nabla_x G(t,s)f,\nabla_x G(t,s)f \rangle \mu_t(dx)\bigg)\\
& \leq \beta(t)^{1-p(t)}\bigg(\frac{p'(t)\Lambda}{2|r_0|}-(p(t)-1)\eta_0\bigg)\int_{\Rd}(G(t,s)f)^{p(t)-2}|\nabla_x G(t,s)f|^2 \mu_t(dx)\\
&=0,
\end{align*}
by the definition of $p(t)$. Hence, $\beta(t_2)\leq \beta(t_1)$.

\vspace{3mm}
{\em Step 2.}  Let   $f\in C^{\infty}_c(\Rd)$ and consider the sequence of functions
\begin{eqnarray*}
f_n(x)=\sqrt{|f(x)|^2+\frac{1}{n}},\qquad\;\,x\in\Rd,\;\,n\in\N.
\end{eqnarray*}
Each  $f_n$ belongs to $C^{\infty}_b(\Rd)$, it has positive values and it is  constant outside a compact subset of $\Rd$. By Step 1, for $p  \leq e^{2\eta_0 |r_0|\Lambda^{-1}(t-s)}(q-1)+1$ we have
\begin{eqnarray*}
\Vert G(t,s)f_n \Vert_{L^{p}(\Rd,\mu_t)}\leq \Vert f_n\Vert_{L^q(\Rd,\mu_s)},\qquad\;\,t\ge s.
\end{eqnarray*}
Since $f_n$ converges to $|f|$ uniformly in $\Rd$, $f_n$ and $G(t,s)f_n$ converge to
$|f|$ and $G(t,s)|f|$ in $L^q(\Rd,\mu_t)$ and in $L^{p }(\Rd,\mu_s)$, respectively, as $n\to +\infty$.
Therefore,
\begin{align}
\Vert G(t,s)f\Vert_{L^{p }(\Rd,\mu_t)}\le
\Vert G(t,s)|f|\Vert_{L^{p }(\Rd,\mu_t)}
\leq \Vert f\Vert_{L^q(\Rd,\mu_s)}.
\label{estim-1}
\end{align}
For a general $f\in L^q(\Rd,\mu_s)$ estimate \eqref{estim-1} follows by density,
approximating $f$ by a sequence $(f_n)_n\subset C^{\infty}_c(\Rd)$.\end{proof}


\section{Poincar\'e inequality and asymptotic behavior}
\label{sec_poincare}


This section is devoted to prove the Poincar\'e inequality for the measures $\mu_t$ and to the study of the decay rate of $ G(t,s) -m_s $ for  $p>1$.

The Poincar\'e  inequality \eqref{Poincare} could  be proved by contradiction, through a classical argument (see e.g., \cite[Thm. 5.8.1]{evans}) that exploits the compactness of the embedding
$W^{1,p}(\Rd,\mu_s)\hookrightarrow L^p(\Rd,\mu_s)$. However, this procedure does not allow  to control the dependence of the constant $C_p$ below
on $s$, whereas in the proof of the next Theorem \ref{thm-Ap-Bp} we need $C_2$ to be independent of $s$.
Hence, we use different arguments. In particular, we use the following lemma.

\begin{lemm}
\label{estensione_p}
Let $\mu(dx) = \rho(x)dx$ be a probability measure in $\R^d$, where $\rho$ is a locally bounded function with positive infimum on every ball, and denote by $m(f)$ the mean value of $f\in W^{1,p}(\Rd,\mu )$ with respect to $\mu$. Assume that a Poincar\'e inequality holds for $p=2$, that is
\begin{equation}\label{Poincare2}
\Vert f- m(f) \Vert_{L^2(\R^d,\mu )}\leq C \Vert\,|\nabla f|\,\Vert_{L^2(\R^d,\mu)},\qquad\;\, f\in W^{1,2}(\Rd,\mu).
\end{equation}
Then for every $p>2$ there is $C_p>0$, depending only on $C$ and $p$,  such that
\begin{equation}\label{Poincarep}
\Vert f- m(f)  \Vert_{L^p(\R^d,\mu )}\leq C_p\Vert\,|\nabla f|\,\Vert_{L^p(\R^d,\mu )}, \qquad\;\, f\in W^{1,p}(\Rd,\mu).
\end{equation}
\end{lemm}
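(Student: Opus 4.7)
\medskip

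\noindent\emph{Plan of proof.}
The plan is to prove \eqref{Poincarep} by bootstrapping the $L^2$ Poincar\'e inequality \eqref{Poincare2}. Set $u := f - m(f)$ and, for $p>2$, apply \eqref{Poincare2} to the test function $w := |u|^{p/2}$. By Lemma \ref{density} we may first take $f\in C^\infty_c(\R^d)$, so that $w\in C^1(\R^d)$ (since $p/2>1$) with pointwise derivative $(p/2)|u|^{p/2-2}u\,\nabla u$; the general case $f\in W^{1,p}(\R^d,\mu)$ then follows by density. Thus $w\in W^{1,2}(\R^d,\mu)$, and \eqref{Poincare2} yields
\begin{equation*}
\int_{\R^d}|u|^p\,d\mu - m(|u|^{p/2})^2 \;\le\; C^2\Big(\tfrac{p}{2}\Big)^{\!2}\int_{\R^d}|u|^{p-2}|\nabla f|^2\,d\mu.
\end{equation*}

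The next step is to estimate both the right-hand side and the additional mean term by $\|u\|_{L^p(\mu)}$ and $\|\nabla f\|_{L^p(\mu)}$. H\"older with exponents $p/(p-2)$ and $p/2$ gives
\begin{equation*}
\int_{\R^d}|u|^{p-2}|\nabla f|^2\,d\mu \;\le\; \|u\|_{L^p(\mu)}^{p-2}\,\|\nabla f\|_{L^p(\mu)}^{2}.
\end{equation*}
For the mean term, Cauchy--Schwarz splits $|u|^{p/2}=|u|^{1/2}\cdot|u|^{(p-1)/2}$ to give
\begin{equation*}
m(|u|^{p/2}) \;\le\; \|u\|_{L^1(\mu)}^{1/2}\,\|u\|_{L^{p-1}(\mu)}^{(p-1)/2}\;\le\;\|u\|_{L^1(\mu)}^{1/2}\,\|u\|_{L^p(\mu)}^{(p-1)/2},
\end{equation*}
where the last step uses that $\mu$ is a probability measure. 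Since $\mu$ is a probability measure, another Jensen/H\"older and the $L^2$ Poincar\'e hypothesis yield
\begin{equation*}
\|u\|_{L^1(\mu)}\;\le\;\|u\|_{L^2(\mu)}\;\le\;C\,\|\nabla f\|_{L^2(\mu)}\;\le\;C\,\|\nabla f\|_{L^p(\mu)}.
\end{equation*}

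Assembling these bounds and writing $X := \|u\|_{L^p(\mu)}$, $Y := \|\nabla f\|_{L^p(\mu)}$, I obtain
\begin{equation*}
X^p \;\le\; C^2\Big(\tfrac{p}{2}\Big)^{\!2} X^{p-2}Y^2 \;+\; C\, Y\, X^{p-1},
\end{equation*}
which, assuming $X>0$ and dividing by $X^{p-2}$, reduces to the quadratic inequality
\begin{equation*}
X^2 - C\, Y\, X - C^2\Big(\tfrac{p}{2}\Big)^{\!2} Y^2 \;\le\; 0.
\end{equation*}
Solving this quadratic in $X$ gives $X\le C_p Y$ with $C_p=\tfrac{C}{2}\big(1+\sqrt{1+p^2}\big)$, which is exactly \eqref{Poincarep} with a constant depending only on $C$ and $p$.

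\medskip

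\noindent The main obstacle I anticipate is a technical, not conceptual, one: making sense of the application of \eqref{Poincare2} to $|u|^{p/2}$ for a general $f\in W^{1,p}(\R^d,\mu)$, since a priori $|u|^{p/2}$ must be recognized as an element of $W^{1,2}(\R^d,\mu)$ with the correct weak gradient. The density statement of Lemma \ref{density} handles this cleanly by approximating $f$ in $W^{1,p}(\R^d,\mu)$ by $C^\infty_c$ functions (for which the chain rule is trivial), deriving \eqref{Poincarep} with the constant $C_p$ that is uniform along the approximating sequence, and passing to the limit using continuity of $m(\cdot)$ on $L^p(\mu)$ and of the integrals involved; the quadratic-inequality step, which is the heart of the argument, is then preserved under the limit.
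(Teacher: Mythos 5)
Your proof is correct, and it takes a genuinely different (and slightly more economical) route than the paper. Both arguments start the same way: apply the $L^2$ Poincar\'e inequality \eqref{Poincare2} to $|u|^{p/2}$ with $u=f-m(f)$, use the chain rule, and apply H\"older with exponents $p/(p-2)$ and $p/2$ to bound $\int|u|^{p-2}|\nabla f|^2\,d\mu$ by $\|u\|_{L^p}^{p-2}\|\,|\nabla f|\,\|_{L^p}^2$. The difference lies in how the mean term $m(|u|^{p/2})^2=\|u\|_{L^{p/2}}^p$ is handled. The paper keeps it as a lower-order Lebesgue norm, applies Young's inequality to absorb part of the main term, and arrives at $\|g\|_{L^p}^p\le K_p\|\,|\nabla g|\,\|_{L^p}^p+2\|g\|_{L^{p/2}}^p$; this is then closed by a bootstrap: for $p\in(2,4]$ the term $\|g\|_{L^{p/2}}$ reduces to the $L^2$ case, and larger $p$ is treated by iterating in dyadic blocks $[2^k\cdot 2, 2^{k+1}\cdot 2)$. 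You instead estimate the mean term directly: Cauchy--Schwarz on $|u|^{p/2}=|u|^{1/2}|u|^{(p-1)/2}$, $\|u\|_{L^{p-1}}\le\|u\|_{L^p}$ (since $\mu$ is a probability measure), and $\|u\|_{L^1}\le\|u\|_{L^2}\le C\|\,|\nabla f|\,\|_{L^2}\le C\|\,|\nabla f|\,\|_{L^p}$, which gives $m(|u|^{p/2})^2\le C\,\|\,|\nabla f|\,\|_{L^p}\|u\|_{L^p}^{p-1}$. Setting $X=\|u\|_{L^p}$, $Y=\|\,|\nabla f|\,\|_{L^p}$, you reduce to the quadratic $X^2-CYX-C^2(p/2)^2Y^2\le 0$, which yields $X\le\tfrac{C}{2}(1+\sqrt{1+p^2})\,Y$ in one step, no iteration needed, with the explicit constant $C_p=\tfrac{C}{2}(1+\sqrt{1+p^2})$. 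Your approach buys a closed-form constant and avoids the induction on dyadic intervals; the paper's bootstrap is arguably more modular (the intermediate estimate \eqref{stima-partenza} is reused verbatim at each stage), but the final constant it produces is implicit. The density/approximation issue --- justifying that $|u|^{p/2}\in W^{1,2}(\Rd,\mu)$ with the expected weak gradient --- is handled by you exactly as in the paper, via Lemma \ref{density} and a passage to the limit.
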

\begin{proof}
 As a first step, we prove that there exists a positive constant $K_p$ such that
\begin{equation}
\|g\|_{L^p(\Rd,\mu )}^p\le
K_p\ \|\,|\nabla g|\,\|_{L^p(\Rd,\mu)}^p+2\|g\|_{L^{p/2}(\Rd,\mu)}^p,\qquad\;\, g\in W^{1,p}(\Rd,\mu).
\label{stima-partenza}
\end{equation}
Since $p>2$, for every $g\in W^{1,p}(\Rd,\mu)$ the function $|g|^{p/2}$ belongs to $W^{1,2}(\Rd,\mu)$. This can be proved approaching $g$ by a sequence of functions in $C^{\infty}_c(\R^d)$, which is dense in $W^{1,p}(\Rd,\mu)$ by Lemma \ref{density_mus}.

Applying the Poincar\'e inequality \eqref{Poincare2} to
 $|g|^{p/2}$ yields
\begin{eqnarray*}
\|g\|_{L^p(\Rd,\mu)}^p\le \frac{(pC)^2}{4 }\|g\|_{L^p(\Rd,\mu)}^{p-2}\|\,|\nabla g|\,\|_{L^p(\Rd,\mu)}^2
+\|g\|_{L^{p/2}(\Rd,\mu)}^p.
\end{eqnarray*}
Using the Young inequality $a^{1-2/p}b^{2/p}\le \varepsilon a+C_{\varepsilon,p}b$
with $a=\|g\|_{L^p(\Rd,\mu)}^p$, $b= \|\,|\nabla g|\,\|_{L^p(\Rd,\mu)}^p$,
and choosing $\varepsilon=2(pC)^{-2}$,
we get \eqref{stima-partenza}
for some positive constant $K_p$ depending only on $C$ and $p$.

If $p\in (2,4]$ we are done: since $p/2\leq 2$, we have $\|g\|_{L^{p/2}(\Rd,\mu)}\leq \|g\|_{L^{2}(\Rd,\mu)}$ and \eqref{stima-partenza} yields
\begin{equation}
\|g\|_{L^p(\Rd,\mu)}^p\le
K_p \|\,|\nabla g|\,\|_{L^p(\Rd,\mu)}^p+2\|g\|_{L^2(\Rd,\mu)}^p ,\qquad\;\, g\in W^{1,p}(\Rd,\mu).
\label{stima-partenza-1}
\end{equation}
Taking  $g = f-m(f)$ with any $f\in  W^{1,p}(\Rd,\mu)$ we get
\begin{align}
\|f-m(f)\|_{L^p(\Rd,\mu)}^p
\le K_p \|\,|\nabla f|\,\|_{L^p(\Rd,\mu)}^p+2\|f-m(f)\|_{L^2(\Rd,\mu)}^p .
\label{00}
\end{align}
Using again \eqref{Poincare2} and  $\|\,|\nabla f|\,\|_{L^2(\Rd,\mu)}\le \|\,|\nabla f|\,\|_{L^p(\Rd,\mu)}$,  \eqref{00} yields
\begin{equation}
\|f-m(f)\|_{L^p(\Rd,\mu)}\le \widetilde{K}_p\|\,|\nabla f|\,\|_{L^p(\Rd,\mu)}, \qquad\;\, f\in W^{1,p}(\Rd,\mu),\qquad\;\,2< p \leq 4,
\label{poincare-p-1}
\end{equation}
with some positive constant $\widetilde{K}_p$ depending only on $C$ and $p$, i.e.,
\eqref{Poincarep} holds true for $2<p\leq 4$.

Let  now $p\in [4,8)$. For any $f\in W^{1,p}(\Rd,\mu)$ we apply \eqref{stima-partenza} to the function
$g=f-m(f)$, and since   $p/2\in [2,4)$, we  may use \eqref{poincare-p-1}
 with $p/2$ instead of $p$,  to get
\eqref{Poincarep}.

Iterating this procedure, we get   \eqref{Poincarep} for any $p>2$.  \end{proof}

\begin{thm}\label{Poi_2}
For every $p\ge 2$, there exists a positive constant $C_p$ such that
\begin{equation}\label{Poincare}
\Vert f- m_s(f) \Vert_{L^p(\R^d,\mu_s)}\leq C_p\Vert\,|\nabla f|\,\Vert_{L^p(\R^d,\mu_s)},
\end{equation}
for any $f \in W^{1,p}(\Rd,\mu_s)$ and any $s \in I$. In particular, if $p=2$, we can take
$C_2=\Lambda^{1/2}|r_0|^{-1/2}$, where $\Lambda$ and $r_0$ are defined in Hypotheses $\ref{hyp1}$.
\end{thm}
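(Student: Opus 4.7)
The plan is to prove \eqref{Poincare} first for $p=2$ by an energy/semigroup argument based on the pointwise gradient estimate \eqref{grad_est_punt}, and then bootstrap to $p>2$ by invoking Lemma \ref{estensione_p}. The case $p=2$ will yield the explicit constant $C_{2}=\Lambda^{1/2}|r_{0}|^{-1/2}$, because both $\Lambda$ (from the bound $\langle Q(t)\xi,\xi\rangle\le\Lambda|\xi|^{2}$) and $|r_{0}|$ (from \eqref{b}) enter directly through elementary estimates.

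First, I would fix $s\in I$ and take $f\in C^{\infty}_{c}(\R^{d})$. Setting $u(t,\cdot):=G(t,s)f$, I would study the function
\begin{equation*}
F(t):=\int_{\R^{d}}(u(t,x))^{2}\,\mu_{t}(dx),\qquad t\ge s.
\end{equation*}
By Lemma \ref{norma_p}, $F(t)\to (m_{s}(f))^{2}$ as $t\to+\infty$, and $F(s)=\|f\|_{L^{2}(\R^{d},\mu_{s})}^{2}$, so that $F(s)-\lim_{t\to+\infty}F(t)=\|f-m_{s}(f)\|_{L^{2}(\R^{d},\mu_{s})}^{2}$. The heart of the argument is to compute $F'(t)$. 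Formally, from $\partial_{t}u=\mathcal{A}(t)u$ and the relation $\mathcal{A}(t)(u^{2})=2u\,\mathcal{A}(t)u+2\langle Q(t)\nabla u,\nabla u\rangle$, combined with Lemma \ref{derivative}(ii) (which gives $\frac{d}{dt}\int g(t,\cdot)\mu_{t}(dx)=\int D_{t}g\,\mu_{t}(dx)-\int\mathcal{A}(t)g(t,\cdot)\mu_{t}(dx)$), one obtains
\begin{equation*}
F'(t)=-2\int_{\R^{d}}\langle Q(t)\nabla_{x}u(t,\cdot),\nabla_{x}u(t,\cdot)\rangle\,\mu_{t}(dx).
\end{equation*}
Since $u^{2}$ is not constant outside a compact set, I would justify this identity by the same cut-off procedure used in Theorem \ref{thm_log_sob}: insert $\theta_{n}$ from \eqref{thetan}, differentiate $F_{n}(t):=\int\theta_{n}u^{2}\mu_{t}(dx)$ via Lemma \ref{derivative}(ii), and pass to the limit $n\to+\infty$, controlling the extra boundary terms with $\|\nabla\theta_{n}\|_{\infty}=O(1/n)$, $|\operatorname{Tr}(Q(t)D^{2}\theta_{n})|=O(1/n^{2})$, and the sign/boundedness estimate \eqref{inequality-b} for $\langle b(t,\cdot),\nabla\theta_{n}\rangle$ on each finite time window $[s,T]$.

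Integrating from $s$ to $+\infty$ and using $\langle Q(t)\xi,\xi\rangle\le\Lambda|\xi|^{2}$ yields
\begin{equation*}
\|f-m_{s}(f)\|_{L^{2}(\R^{d},\mu_{s})}^{2}\le 2\Lambda\int_{s}^{+\infty}\!\!\int_{\R^{d}}|\nabla_{x}G(t,s)f|^{2}\,\mu_{t}(dx)\,dt.
\end{equation*}
The pointwise gradient estimate \eqref{grad_est_punt} with $p=2$, together with the fact that $G(t,s)$ is sub-Markov (so $(G(t,s)|\nabla f|)^{2}\le G(t,s)(|\nabla f|^{2})$ by Jensen's inequality applied to the kernel in \eqref{rep_nucleo}), and the invariance identity \eqref{invariant-0}, give
\begin{equation*}
\int_{\R^{d}}|\nabla_{x}G(t,s)f|^{2}\mu_{t}(dx)\le e^{2r_{0}(t-s)}\int_{\R^{d}}|\nabla f|^{2}\mu_{s}(dx).
\end{equation*}
Computing $2\Lambda\int_{s}^{+\infty}e^{2r_{0}(t-s)}dt=\Lambda/|r_{0}|$ produces exactly the constant $C_{2}^{2}=\Lambda/|r_{0}|$. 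Density of $C^{\infty}_{c}(\R^{d})$ in $W^{1,2}(\R^{d},\mu_{s})$ (Lemma \ref{density}) extends \eqref{Poincare} for $p=2$ to the whole Sobolev space, with a constant independent of $s$.

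Finally, for $p>2$ the conclusion is immediate from Lemma \ref{estensione_p} applied to $\mu=\mu_{s}$, whose density is locally bounded and locally bounded away from zero by Lemma \ref{density_mus}. The constant $C_{p}$ produced there depends only on $C_{2}$ and $p$, hence is independent of $s$, as required for the later use in Theorem \ref{thm-Ap-Bp}. The main technical obstacle is the rigorous justification of the identity for $F'(t)$: exactly as in Theorem \ref{thm_log_sob}, one has to insert the cut-offs $\theta_{n}$, carefully identify the four additional terms (involving $\nabla\theta_{n}$, $D^{2}\theta_{n}$, $\langle b(t,\cdot),\nabla\theta_{n}\rangle$, and the drift times the cut-off gradient), and show that each of them is negligible in the limit $n\to+\infty$ uniformly on bounded time intervals; the rest of the proof is essentially a direct computation.
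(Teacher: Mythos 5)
Your plan for $p=2$ is genuinely different from the paper's: you attempt a direct Deuschel--Stroock energy argument on $F(t)=\int (G(t,s)f)^2\,\mu_t(dx)$, whereas the paper deduces the $L^2$ Poincar\'e inequality from the already-proven LSI \eqref{Log_Sob} by the Rothaus linearization trick (set $f_\varepsilon=1+\varepsilon f$ with $m_s(f)=0$ and match terms at order $\varepsilon^2$). Your formal computation and the resulting constant $C_2=\Lambda^{1/2}|r_0|^{-1/2}$ are both correct, but the cut-off justification you invoke by analogy with Theorem~\ref{thm_log_sob} does \emph{not} carry over, because the sign of the extra drift term comes out wrong. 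Writing $u=G(\cdot,s)f$ and $F_n(t)=\int\theta_n u^2\,\mu_t$, the troublesome term is $J_3(\sigma,n):=-\int u^2\langle b(\sigma,\cdot),\nabla\theta_n\rangle\,\mu_\sigma(dx)$. Since $u^2\ge 0$ and \eqref{inequality-b} only gives the one-sided bound $\langle b(\sigma,x),\nabla\theta_n(x)\rangle\ge \eta'(|x|/n)C_{s,T}/n^2$ (there is no lower bound on $\langle b(\sigma,x),x\rangle$, so no upper bound on this inner product), you only obtain $\limsup_{n\to\infty}\int_s^t J_3(\sigma,n)\,d\sigma\le 0$. Hence the cut-off argument delivers $F(t)-F(s)\le -2\int_s^t\int\langle Q(\sigma)\nabla_x u,\nabla_x u\rangle\,\mu_\sigma\,d\sigma$, i.e., after letting $t\to+\infty$, the \emph{reverse} inequality $\|f-m_s(f)\|_{L^2(\mu_s)}^2\ge 2\eta_0\int_s^{+\infty}\||\nabla_x u(\sigma)|\|_{L^2(\mu_\sigma)}^2\,d\sigma$, which is useless for Poincar\'e. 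In the LSI proof the sign works because the integrand $G(\sigma,s)f^p\log(G(\sigma,s)f^p)$ is \emph{non-positive} after the normalization $\sup f\le 1$; your integrand $u^2$ is non-negative, so the same estimate goes the other way.

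This gap is patchable: replace $u^2$ by $u^2-M$ with $M>\|f\|_\infty^2$, so that the new integrand is non-positive; since $\mu_t$ are probability measures this shifts $F$, $F_n$ by the same constant $M$ without changing $F_n(t)-F_n(s)$ or $J_1,J_4$, while the new $J_3$ then satisfies $\liminf_n\int J_3\ge 0$ exactly as $I_3$ does in Theorem~\ref{thm_log_sob}, yielding the needed direction $F(t)-F(s)\ge -2\int_s^t\int\langle Q\nabla_x u,\nabla_x u\rangle\mu_\sigma\,d\sigma$ and thus $\|f-m_s(f)\|_{L^2(\mu_s)}^2\le \Lambda|r_0|^{-1}\||\nabla f|\|_{L^2(\mu_s)}^2$. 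Note, however, that the paper's route via LSI and the Rothaus expansion avoids re-doing the cut-off analysis entirely, gives the same constant with less work, and is what \cite{rothaus} is for; your approach effectively re-derives a special case of what the LSI already encodes. The bootstrap to $p>2$ via Lemma~\ref{estensione_p}, and the use of Lemma~\ref{density} and Lemma~\ref{density_mus}, are exactly as in the paper.
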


\begin{proof}
For $p=2$, estimate \eqref{Poincare} follows from the LSI \eqref{Log_Sob} by \cite{rothaus}.
For the reader's convenience we give a sketch of the proof.
Let   $f \in C^1_b(\Rd)$ be such that  $m_s(f)=0$. Replacing  $f$ by  $f_{\varepsilon}:=1+\varepsilon f$
in the inequality \eqref{Log_Sob} with $p=2$ yields
\begin{align*}
\int_{\Rd} f_\varepsilon^2(\log f_\varepsilon^2)\mu_s(dx)-\|f_\varepsilon\|_{L^2(\Rd,\mu_s)}^2
\log(\|f_\varepsilon\|_{L^2(\Rd,\mu_s)}^2)
= 2\varepsilon^2\|f_\varepsilon\|_{L^2(\Rd,\mu_s)}^2+o(\varepsilon ^2),
\end{align*}
as $\varepsilon \to 0^+$.
Thus, by \eqref{Log_Sob} we get \eqref{Poincare}. If $m_s(f)\neq 0$ it suffices to consider the function
$f-m_s(f)$. Since
 $C^1_b(\Rd)$ is dense in $W^{1,2}(\Rd,\mu_s)$ (see Lemma \ref{density}) the claim follows.

For $p>2$, the statement follows from Lemma \ref{estensione_p}. \end{proof}

Next theorem shows how the decay of $G(t,s) -m_s $ to $0$  is related to the decay of $\nabla_x G(t,s)$ to $0$.
A similar result has been proved in \cite[Thm. 3.6]{LorLunZam10} in the case of time periodic coefficients under stronger assumptions than ours, and only for $p\ge 2$. The approach of  \cite{LorLunZam10} is different from the present one, since it relies on the use of  the evolution semigroup associated to the evolution family $G(t,s)$.

For $p\geq 1$ we define the sets $\mathfrak{A}_p$ and $\mathfrak{B}_p$ by
\begin{align*}
\mathfrak{A}_p=&\big\{\omega\in\R: \exists M_{p,\omega}>0~{\rm s.t.}~\|G(t,s)f-m_s(f)\|_{L^p(\R^d,\mu_t)}\le M_{p,\omega}e^{\omega (t-s)}\|f\|_{L^p(\R^d,\mu_s)},\\
&~~~\;I\ni s<t,\;\,f\in L^p(\R^d,\mu_s)\big\};\\[1mm]
\mathfrak{B}_p=&\big\{\omega\in\R: \exists N_{p,\omega}>0~{\rm s.t.}~\|\,|\nabla_x G(t,s)f|\,\|_{L^p(\R^d,\mu_t)}\le N_{p,\omega}e^{\omega ( t-s)}\|f\|_{L^p(\R^d,\mu_s)},\\
&~~~\;s,t\in I,\, t-s\ge 1,\;\,f\in L^p(\R^d,\mu_s) \big\}.
\end{align*}

\begin{thm}
\label{thm-Ap-Bp}
The sets ${\mathfrak A}_p$ and  $\mathfrak{B_p}$ are independent of $p\in (1,+\infty)$, and they coincide.
\end{thm}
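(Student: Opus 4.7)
The plan is to establish, in order, four inclusions: (i) $\mathfrak{A}_p\subseteq\mathfrak{B}_p$ for every $p\in(1,+\infty)$; (ii) $\mathfrak{B}_p\subseteq\mathfrak{A}_p$ for every $p\ge 2$; (iii) $\mathfrak{A}_p=\mathfrak{A}_q$ for every $p,q\in(1,+\infty)$; and (iv) $\mathfrak{B}_p\subseteq\mathfrak{B}_q$ for every $1<p<q$. These together will give the theorem at once: for $p\ge 2$, items (i)+(ii)+(iii) give $\mathfrak{A}_p=\mathfrak{B}_p=:\mathfrak{A}$, while for $1<p<2$, (iii) and (i) yield $\mathfrak{A}=\mathfrak{A}_p\subseteq\mathfrak{B}_p$ and (iv) with $q=2$ yields $\mathfrak{B}_p\subseteq\mathfrak{B}_2=\mathfrak{A}$. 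Throughout, the short-time regime (e.g.\ $0<t-s<1$ for $\mathfrak{A}_p$, or the analogous small gaps for the other sets) will be absorbed into the constants by the contractivity \eqref{contraz} and the universal bound of Proposition \ref{cor:decay}; I therefore sketch only the long-time estimates.

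For (i), I would split $G(t,s)=G(t,t-1)G(t-1,s)$. Because $G(t,t-1)$ preserves constants, one may replace $G(t-1,s)f$ by $G(t-1,s)f-m_s(f)$ inside the gradient; the smoothing estimate \eqref{grad_est_norm_p} with time gap $1$ and the $\mathfrak{A}_p$-hypothesis applied to $G(t-1,s)f-m_s(f)$ then give $\|\,|\nabla_xG(t,s)f|\,\|_{L^p(\mu_t)}\le c_pM_{p,\omega}e^{-\omega}e^{\omega(t-s)}\|f\|_{L^p(\mu_s)}$. For (ii), the identity $m_t(G(t,s)f)=m_s(f)$ together with the Poincar\'e inequality \eqref{Poincare} applied to $G(t,s)f\in W^{1,p}(\R^d,\mu_t)$ immediately gives $\|G(t,s)f-m_s(f)\|_{L^p(\mu_t)}\le C_p\|\,|\nabla_xG(t,s)f|\,\|_{L^p(\mu_t)}$, and the $\mathfrak{B}_p$-hypothesis closes the argument.

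For (iii), set $T_{p,q}:=\Lambda(2\eta_0|r_0|)^{-1}\log\tfrac{q-1}{p-1}$, so that by Theorem \ref{thm_fin} the operator $G(t,\tau)\colon L^p(\mu_\tau)\to L^q(\mu_t)$ is a contraction whenever $t-\tau\ge T_{p,q}$. For $\omega\in\mathfrak{A}_p$ and $q>p$, take $\tau=t-T_{p,q}$ and write $G(t,s)f-m_s(f)=G(t,\tau)[G(\tau,s)f-m_s(f)]$; chaining hypercontractivity with the $\mathfrak{A}_p$-hypothesis and the Jensen inequality $\|f\|_{L^p(\mu_s)}\le\|f\|_{L^q(\mu_s)}$ will produce $\omega\in\mathfrak{A}_q$. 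For $q<p$, the dual choice $\tau=s+T_{q,p}$ makes $G(\tau,s)\colon L^q(\mu_s)\to L^p(\mu_\tau)$ a contraction; I would apply $\mathfrak{A}_p$ to $G(t,\tau)$ with the zero-$\mu_\tau$-mean input $h=G(\tau,s)(f-m_s(f))$, bound $\|h\|_{L^p(\mu_\tau)}\le 2\|f\|_{L^q(\mu_s)}$ via hypercontractivity, and close by the reverse Jensen $\|\cdot\|_{L^q(\mu_t)}\le\|\cdot\|_{L^p(\mu_t)}$.

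The main obstacle is (iv) in the range $1<p<2\le q$, because the Poincar\'e inequality is only established for $p\ge 2$ and so the route used in (ii) is blocked. My proposal is to bypass it through the pointwise gradient estimate \eqref{grad_est_punt} combined with hypercontractivity applied to the scalar function $|\nabla_xG(\tau,s)f|$. Given $\omega\in\mathfrak{B}_p$ and $t-s\ge T_{p,q}+1$, set $\tau=t-T_{p,q}$, so that also $\tau-s\ge 1$. Applying \eqref{grad_est_punt} to $G(t,\tau)$ with input $G(\tau,s)f$ yields
\[
|\nabla_xG(t,s)f|=|\nabla_xG(t,\tau)(G(\tau,s)f)|\le e^{r_0T_{p,q}}\,G(t,\tau)|\nabla_xG(\tau,s)f|.
\]
Taking $L^q(\mu_t)$-norms, the hypercontractive bound $\|G(t,\tau)w\|_{L^q(\mu_t)}\le\|w\|_{L^p(\mu_\tau)}$ on $w=|\nabla_xG(\tau,s)f|$, combined with the $\mathfrak{B}_p$-hypothesis on $\|\,|\nabla_xG(\tau,s)f|\,\|_{L^p(\mu_\tau)}$ (valid since $\tau-s\ge 1$) and Jensen $\|f\|_{L^p(\mu_s)}\le\|f\|_{L^q(\mu_s)}$, will chain to
\[
\|\,|\nabla_xG(t,s)f|\,\|_{L^q(\mu_t)}\le e^{r_0T_{p,q}}N_{p,\omega}e^{-\omega T_{p,q}}e^{\omega(t-s)}\|f\|_{L^q(\mu_s)},
\]
i.e.\ $\omega\in\mathfrak{B}_q$, which completes the proof.
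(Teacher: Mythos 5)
Your proposal is correct, and it reorganizes the proof in a way that is genuinely different from (though built from the same toolbox as) the paper's. The paper proceeds in two steps: it first proves that both $\mathfrak{A}_p$ and $\mathfrak{B}_p$ are $p$-independent (each in both directions) by iterating the hypercontractive estimate \eqref{hyper-estim} along unit time steps, raising the exponent from $p_1$ to $p_2=e^{2\eta_0|r_0|\Lambda^{-1}}(p_1-1)+1$ at each step, and only then proves $\mathfrak{A}_2=\mathfrak{B}_2$ using \eqref{grad_est_norm_p} and the Poincar\'e inequality at $p=2$. You instead prove four targeted inclusions and chain them: (i) $\mathfrak{A}_p\subseteq\mathfrak{B}_p$ for all $p$, (ii) $\mathfrak{B}_p\subseteq\mathfrak{A}_p$ for $p\ge 2$, (iii) $\mathfrak{A}_p=\mathfrak{A}_q$, (iv) $\mathfrak{B}_p\subseteq\mathfrak{B}_q$ for $p<q$. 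This is logically economical (you never prove the downward inclusion $\mathfrak{B}_q\subseteq\mathfrak{B}_p$, which the paper does but doesn't strictly need) and it correctly identifies and circumvents the real obstruction — Poincar\'e is only available for $p\ge 2$ — by routing through $p$-independence of $\mathfrak{A}_p$ and the one-sided inclusion (iv), exactly as the paper does implicitly. The other notable difference is that you replace the paper's iterated unit-gap jumps by a single hypercontractive jump of variable length $T_{p,q}=\Lambda(2\eta_0|r_0|)^{-1}\log\frac{q-1}{p-1}$, which makes the constants more transparent at the cost of having to absorb a $p,q$-dependent short-time window into the multiplicative constant (which you correctly note is handled by \eqref{contraz} and Proposition \ref{cor:decay}). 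What the paper's formulation buys is that it isolates the $p$-independence of each family $\mathfrak{A}_p$, $\mathfrak{B}_p$ as a self-contained fact; what yours buys is a clearer logical skeleton showing exactly which inclusion needs which hypothesis, and in particular it makes explicit that the smoothing estimate \eqref{grad_est_norm_p} already gives $\mathfrak{A}_p\subseteq\mathfrak{B}_p$ for \emph{every} $p>1$, not just $p=2$.
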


\begin{proof}
As a first step we prove that $\mathfrak{A}_p$ and $\mathfrak{B}_p$ are independent of $p\in (1,+\infty)$.
Then,  we prove that $\mathfrak{A}_2=\mathfrak{B}_2$, which yields the conclusion.

\emph{Step 1.} To prove  that $\mathfrak{A}_p$ is independent of $p$ we use the hypercontractivity of  $G(t,s)$.
Fix $p_1>1$ and set $p_2=e^{2\eta_0|r_0|\Lambda^{-1}}(p_1-1)+1$. Clearly, $p_1<p_2$.
Let  $p\in (p_1,p_2]$. From \eqref{hyper-estim} we deduce
\begin{align*}
\|G(t,s)f-m_s(f)\|_{L^p(\Rd,\mu_t)}=&
\|G(t,t-1)(G(t-1,s)f-m_s(f))\|_{L^p(\Rd,\mu_t)}\\
\le &\|G(t-1,s)f-m_s(f)\|_{L^{p_1}(\Rd,\mu_{t-1})},
\end{align*}
for any $f\in L^p(\Rd,\mu_s)$.
 If $\omega\in \mathfrak{A}_{p_1}$ we have
\begin{align*}
\|G(t-1,s)f-m_s(f)\|_{L^{p_1}(\Rd,\mu_{t-1})}\le &M_{p_1,\omega}e^{\omega(t-s-1)}\|f\|_{L^{p_1}(\Rd,\mu_s)}\\
\le &M_{p_1,\omega}e^{\omega(t-s-1)}\|f\|_{L^p(\Rd,\mu_s)},
\end{align*}
for some positive constant $M_{p_1,\omega}$, independent of $f$. Then,
\begin{eqnarray*}
\|G(t,s)f-m_s(f)\|_{L^p(\Rd,\mu_t)}
\le M_{p_1,\omega}e^{\omega(t-s-1)}\|f\|_{L^p(\Rd,\mu_s)},
\end{eqnarray*}
so that $\mathfrak{A}_{p_1}\subset \mathfrak{A}_p$.

On the other hand, for   $t>s+1$,  $\omega\in \mathfrak{A}_p$ and   $f\in L^p(\Rd,\mu_s)$,  using again \eqref{hyper-estim}, we get
\begin{align*}
\|G(t,s)f-m_s(f)\|_{L^{p_1}(\Rd,\mu_t)}
\le &\|G(t,s)f-m_s(f)\|_{L^p(\Rd,\mu_t)}\\
= &\|G(t,s+1)G(s+1,s)f-m_s(f)\|_{L^p(\Rd,\mu_t)}\\
= &\|G(t,s+1)G(s+1,s)f-m_{s+1}(G(s+1,s)f)\|_{L^p(\Rd,\mu_t)}\\
\le &M_{p,\omega}e^{\omega(t-s-1)}\|G(s+1,s)f\|_{L^p(\Rd,\mu_{s+1})}\\
\le &M_{p,\omega}e^{\omega(t-s-1)}\|f\|_{L^{p_1}(\Rd,\mu_s)},
\end{align*}
for some positive constant $M_{p,\omega}$.
As above, we conclude that $\omega\in\mathfrak{A}_{p_1}$ so that $\mathfrak{A}_p\subset \mathfrak{A}_{p_1}$.

We have thus proved that $\mathfrak{A}_p=\mathfrak{A}_{p_1}$ for any $p\in (p_1,p_2]$. Starting from $p_2$, the same arguments yield  ${\mathfrak A}_p={\mathfrak A}_{p_2}={\mathfrak A}_{p_1}$ for any $p\in [p_2,p_3]$,
where $p_3 = e^{2\eta_0|r_0|\Lambda^{-1}}(p_2-1)+1$. Note that the sequence defined recursively by $p_1>1$, $p_{k+1} = e^{2\eta_0|r_0|\Lambda^{-1}}(p_{k}-1)+1$ has limit $+\infty$ as $k\to +\infty$. Hence,
 iterating this argument we
obtain $\mathfrak{A}_p=\mathfrak{A}_{p_1}$ for any $p\in [p_1,+\infty)$. Since $p_1\in (1,+\infty)$ is arbitrary, ${\mathfrak A}_p$ is independent of $p\in (1,+\infty)$.

In a similar way we   prove that $\mathfrak{B}_p$ is independent of $p\in (1,+\infty)$. Indeed, let $p_1$, $p_2$ and $p$ be as above. If $\omega \in \mathfrak{B}_{p_1}$ and $t-s\geq 2$,
using \eqref{grad_est_punt} and
\eqref{hyper-estim} we get, for $f\in C^1_b(\R^d)$,
\begin{align*}
\|\,|\nabla_xG(t,s)f|\,\|_{L^p(\Rd,\mu_t)}=& \|\,|\nabla_xG(t,t-1)G(t-1,s)f\|_{L^p(\Rd,\mu_t)}\\
\le & \|G(t,t-1)|\nabla_xG(t-1,s)f|\,\|_{L^p(\Rd,\mu_t)}\\
\le &\|\,|\nabla_xG(t-1,s)f|\,\|_{L^{p_1}(\Rd,\mu_{t-1})} \\
\le & N_{p_1,\omega}e^{\omega (t-s-1)}\|f\|_{L^{p_1}(\Rd,\mu_s)}\\
\le & N_{p_1,\omega}e^{\omega (t-s-1)}\|f\|_{L^p(\Rd,\mu_s)},
\end{align*}
for some positive constant $N_{p_1,\omega}$, independent of $f$, $s$ and $t$. Since $ C^1_b(\R^d)$ is dense in $L^p(\Rd,\mu_s)$ the estimate holds for every $f\in L^p(\Rd,\mu_s)$.
Similarly, if $\omega\in \mathfrak{B}_p$ and  $t-s\geq 2$,
then
\begin{align*}
\|\,|\nabla_xG(t,s)f|\,\|_{L^{p_1}(\Rd,\mu_t)}\le & \|\,|\nabla_xG(t,s)f|\,\|_{L^p(\Rd,\mu_t)}\\
= & \|\,|\nabla_xG(t,s+1)G(s+1,s)f|\,\|_{L^p(\Rd,\mu_t)}\\
\le & N_{p,\omega}e^{\omega(t-s-1)}\|G(s+1,s)f\|_{L^p(\Rd,\mu_{s+1})}\\
\le & N_{p,\omega}e^{\omega(t-s-1)}\|f\|_{L^{p_1}(\Rd,\mu_{s})},
\end{align*}
for  some positive constant $N_{p,\omega}$, independent of $f\in L^{p_1}(\Rd,\mu_{s})$, $s$ and $t$.

Then,  $\mathfrak{B}_p=\mathfrak{B}_{p_1}$ for any $p\in (p_1,p_2]$.
Iterating this argument as above, we conclude that  $\mathfrak{B}_p$ is independent of
$p\in (1,+\infty)$.

\emph{Step 2.} Let us prove that $\mathfrak{A}_2=\mathfrak{B}_2$. Fix $\omega\in \mathfrak{A}_2$,
$s,t\in I$, with $t-s\ge 1$, and $f\in L^2(\R^d,\mu_s)$ with $m_s(f)=0$.
Splitting $G(t,s)f=G(t,t-1)G(t-1,s)f$ and using \eqref{grad_est_norm_p} we get
\begin{align*}
\|\,|\nabla_x G(t,s)f|\,\|_{L^2(\R^d,\mu_t)}=&\|\,|\nabla_x G(t,t-1)G(t-1,s)f|\,\|_{L^2(\R^d,\mu_t)}\\
\le & c_2\|G(t-1,s)f\|_{L^2(\R^d,\mu_{t-1})}\\
\le & c_2 M_{2,\omega}e^{\omega(t-s)}\|f\|_{L^2(\R^d,\mu_s)}.
\end{align*}
If $m_s(f)\neq 0$ it suffices to apply the above estimate to   $f-m_s(f)$ and recall that
$\|f-m_s(f)\|_{L^2(\Rd,\mu_s)}\le 2\|f\|_{L^2(\Rd,\mu_s)}$.
Hence, $\omega\in \mathfrak{B}_2$, so that $\mathfrak{A}_2\subset\mathfrak{B}_2$.

Conversely, fix $\omega\in \mathfrak{B}_2$, $s,t\in I$, with $t-s\ge 1$ and $f\in L^2(\R^d,\mu_s)$.
Applying the Poincar\'e inequality \eqref{Poincare} (with $\mu_s$ and $f$   replaced by $\mu_t$ and $G(t,s)f$, respectively) and
observing that $m_t(G(t,s)f)=m_s(f)$, we get
\begin{align}
\|G(t,s)f-m_s(f)\|_{L^2(\R^d,\mu_t)}=&\|G(t,s)f- m_t(G(t,s)f)\|_{L^2(\R^d,\mu_t)}\notag\\
\le &C_2\|\,|\nabla_x G(t,s)f|\,\|_{L^2(\R^d,\mu_t)}\notag\\
\le &C_2N_{2,\omega}e^{\omega(t-s)}\|f\|_{L^2(\R^d,\mu_s)}.
\label{Ap-Bp}
\end{align}
If $t-s<1$ estimate \eqref{Ap-Bp} (with the constant $C_2N_{2,\omega}$ possibly replaced by a larger one) is a direct consequence of \eqref{contr}.
Then, $\omega\in {\mathfrak A}_2$ and the set equality
$\mathfrak{A}_2=\mathfrak{B}_2$ follows.
\end{proof}

An important consequence of Theorem \ref{thm-Ap-Bp} is an estimate for the exponential decay rate
of $G(t,s)f$ to $m_s(f)$.

\begin{coro}
\label{cor:compas}
For every $p>1$ there exists $C_p>0$ such that
\begin{eqnarray*}
\|G(t,s)f-m_s(f)\|_{L^p(\R^d,\mu_t)} \leq C_pe^{r_0(t-s)}\|f\|_{L^p(\R^d,\mu_s)}, \quad t>s\in I, \;f\in L^p(\R^d,\mu_s).
\end{eqnarray*}
\end{coro}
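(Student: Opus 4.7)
The plan is to combine Theorem \ref{thm-Ap-Bp} with the pointwise gradient estimate established earlier. The corollary is essentially a direct reading of these two results, so there is no serious obstacle; the work has already been done upstream, and here one only needs to identify the correct witness $\omega$ for the set $\mathfrak{A}_p$.

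More precisely, I would first invoke Proposition \ref{cor:decay} (estimate \eqref{contraz_grad0}), which provides, for every $p\geq 1$, every $s\in I$ and every $t\geq s+1$, the bound
\begin{equation*}
\|\,|\nabla_x G(t,s)f|\,\|_{L^p(\R^d,\mu_t)}\leq K_p e^{r_0(t-s)}\|f\|_{L^p(\R^d,\mu_s)},\qquad f\in L^p(\R^d,\mu_s).
\end{equation*}
Comparing with the definition of $\mathfrak{B}_p$, this says exactly that $r_0\in\mathfrak{B}_p$ for every $p\in(1,+\infty)$. By Theorem \ref{thm-Ap-Bp}, $\mathfrak{B}_p=\mathfrak{A}_p$, so $r_0\in\mathfrak{A}_p$ as well, and therefore there exists $M_{p,r_0}>0$ such that
\begin{equation*}
\|G(t,s)f-m_s(f)\|_{L^p(\R^d,\mu_t)}\leq M_{p,r_0}\,e^{r_0(t-s)}\|f\|_{L^p(\R^d,\mu_s)},\qquad s<t\in I,\;f\in L^p(\R^d,\mu_s).
\end{equation*}
Setting $C_p:=M_{p,r_0}$ yields the statement.

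In summary, the proof is a one-line deduction: \emph{gradient decay at rate $r_0$} (Proposition \ref{cor:decay}) $\Longrightarrow$ $r_0\in\mathfrak{B}_p$ $\Longrightarrow$ (by Theorem \ref{thm-Ap-Bp}) $r_0\in\mathfrak{A}_p$ $\Longrightarrow$ the claimed exponential decay. The only minor subtlety is that \eqref{contraz_grad0} is stated for $t\geq s+1$, but this is precisely the range required in the definition of $\mathfrak{B}_p$, so nothing needs to be adjusted.
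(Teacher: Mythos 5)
Your proof is correct and is precisely the paper's own argument, just spelled out: the paper's proof simply states ``It is sufficient to apply Proposition \ref{cor:decay} and Theorem \ref{thm-Ap-Bp},'' and you have filled in the one-line chain $r_0\in\mathfrak{B}_p\Rightarrow r_0\in\mathfrak{A}_p$.
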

\begin{proof}
It is sufficient to apply Proposition
\ref{cor:decay} and Theorem \ref{thm-Ap-Bp}.
\end{proof}

\begin{rmk}
\rm{Arguing as in the proof of Theorem \ref{thm-Ap-Bp} it   is possible also to prove that $\mathfrak{A}_p=\mathfrak{C}_p$ for any $p\in (1,+\infty)$, where
\begin{align*}
\mathfrak{C}_p=\big\{&\omega\in\R: \|\,|\nabla_x G(t,s)f|\,\|_{L^p(\R^d,\mu_t)}\le \Theta_{p,\omega}e^{\omega ( t-s)}\|f\|_{W^{1,p}(\R^d,\mu_s)},\\
&{\rm for~any}\;s,t\in I,\, t>s,\;\,f\in W^{1,p}(\R^d,\mu_s)\;\,{\rm and~some}~\,\Theta_{p,\omega}>0 \big\}.
\end{align*}
}
\end{rmk}

\begin{rmk}
\rm{For $p=1$ the sets $\mathfrak{A}_1$ and $\mathfrak{B}_1$ may not coincide, even in the autonomous case. For instance, in the case of the Ornstein-Uhlenbeck operator $({\mathcal A}\zeta)(x) := \zeta''(x) - x\zeta'(x)$ we have $\mu_t(dx)=(2\pi)^{-1/2}e^{-x^2/2}dx$ for every $t$, and every $\lambda<0$ is an eigenvalue of the realization of ${\mathcal A}$ in $L^1( \R, \mu)$ as shown in \cite{MPP}. This implies that $\mathfrak{A}_1$ cannot contain negative numbers, so that $\mathfrak{A}_1=[0, +\infty)$. On the other hand, in this case $r_0= -1 \in \mathfrak{B}_1$ by Proposition \ref{cor:decay}. }
\end{rmk}

\section{The case of convergent coefficients as $t\to +\infty$}
\label{sect-asympt-meas}
Besides Hypotheses \ref{hyp1}, we assume here the following additional conditions.
\begin{hyp}
\label{hyp2}
\begin{enumerate}[\rm (i)]
\item
For any $r>0$ and some $($and hence any$)$ $t_0\in I$,
\begin{eqnarray*}
\sup_{(t,x)\in (t_0,+\infty)\times B(0,r)}|b(t,x)|<+\infty;
\end{eqnarray*}
\item
there exists a matrix $Q\in\mathcal{L}(\Rd)$ such that
\begin{eqnarray*}
\lim_{t\to +\infty}Q(t)=Q,
\end{eqnarray*}
in $\mathcal{L}(\Rd)$;
\item
there exist functions $b_j\in C^{\alpha}_{\rm loc}(\Rd)$ $(j=1,\ldots,d)$ such that
\begin{eqnarray*}
\lim_{t\to +\infty}b_j(t,x)=b_j(x),\qquad\;\,x\in\Rd,\;\,j=1,\ldots,d.
\end{eqnarray*}
\end{enumerate}
\end{hyp}

Let us consider the elliptic operator $\mathcal{A}$ defined on smooth functions $\zeta$ by
\begin{eqnarray*}
\mathcal{A}\zeta(x)=\sum_{i,j=1}^dq_{ij}D_{ij}\zeta(x)+\sum_{j=1}^db_j(x)D_j\zeta(x),\qquad\;\,x\in\Rd,
\end{eqnarray*}
Due to Hypotheses \ref{hyp1}(ii)-(iii), the operator $\mathcal{A}$ is uniformly elliptic. Moreover,
\begin{equation}
(\mathcal{A}\varphi)(x)\le a-c\varphi(x),\qquad\;\,x\in\Rd.
\label{khas}
\end{equation}
It is well known that, under Hypotheses \ref{hyp2}, there exists a Markov semigroup $T(t)$ associated to the operator $\mathcal{A}$ in $C_b(\Rd)$ (see \cite[Sect. 4]{MPW}).
For any $f\in C_b(\Rd)$ $u=T(\cdot)f$ is the unique solution to the Cauchy problem
\begin{eqnarray*}
\left\{
\begin{array}{ll}
D_tu(t,x)=\mathcal{A}u(t,x), & (t,x)\in (0,+\infty)\times\Rd,\\[1mm]
u(0,x)=f(x), &x\in\Rd,
\end{array}
\right.
\end{eqnarray*}
which belongs to $C_b([0,+\infty)\times\Rd)\cap C^{1,2}((0,+\infty)\times\Rd)$. Moreover, for any $t>0$, $T(t)$ is a contraction in $C_b(\Rd)$.

Condition \eqref{khas} and the Khas'minskii theorem yield the existence of a unique invariant measure
of the semigroup $T(t)$, i.e., a probability measure $\mu$ such that
\begin{eqnarray*}
\int_{\Rd}T(t)f\,\mu(dx)=\int_{\Rd}f\,\mu(dx),
\end{eqnarray*}
for any $f\in C_b(\Rd)$ (see \cite[Sect. 6]{MPW}).

\begin{thm}
\label{thm-limit}
For any $f\in C_b(\Rd)$
\begin{eqnarray*}
\lim_{t\to +\infty}\int_{\Rd}f\mu_t(dx)=\int_{\Rd}f\mu(dx),
\end{eqnarray*}
i.e., $\mu_t$ weakly$^*$ converges to $\mu$ as $t\to +\infty$.
\end{thm}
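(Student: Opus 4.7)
The plan is to combine tightness of $\{\mu_t\}$ with uniqueness of the invariant measure of $T(t)$. By Prokhorov's theorem, tightness (which follows from \eqref{const-M}) ensures that any sequence $t_n\to+\infty$ admits a subsequence along which $\mu_{t_n}\to\nu$ weakly$^*$ for some Borel probability measure $\nu$ on $\Rd$. Since $\mu$ is the unique invariant probability measure of $T(t)$, it is enough to prove $T^*(\tau)\nu=\nu$ for every such $\nu$ and every $\tau>0$, whence $\nu=\mu$ and the whole family $\mu_t$ converges weakly$^*$ to $\mu$.

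The crucial technical ingredient is a stability result for the evolution operators: for every $f\in C_b(\Rd)$ and every $\tau>0$,
\begin{equation*}
\lim_{s\to+\infty}(G(s+\tau,s)f)(x)=(T(\tau)f)(x),
\end{equation*}
uniformly for $x$ in compact subsets of $\Rd$. I would prove this by a standard parabolic compactness argument. Given $s_n\to+\infty$, the functions $u_n(r,x):=(G(s_n+r,s_n)f)(x)$ on $[0,\tau]\times\Rd$ are equibounded by $\|f\|_\infty$ and solve parabolic equations whose coefficients are locally uniformly H\"older in $(r,x)$ (by Hypotheses \ref{hyp1}(i), \ref{hyp1}(iv) and \ref{hyp2}(i)) and converge locally uniformly to those of $\mathcal{A}$ as $s_n\to+\infty$ (by Hypotheses \ref{hyp2}(ii)--(iii)). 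Interior parabolic Schauder estimates combined with Arzel\`a--Ascoli produce a subsequential limit in $C^{1,2}_{\rm loc}((0,\tau]\times\Rd)$ which is a bounded classical solution of $D_r u=\mathcal{A}u$, $u(0,\cdot)=f$; by uniqueness of such solutions this limit is $T(\cdot)f$, and hence the whole family converges.

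Given the stability lemma, a diagonal extraction provides a subsequence (still denoted $t_n$) along which $\mu_{t_n}\to\nu$, $\mu_{t_n-\tau}\to\nu_\tau^-$ and $\mu_{t_n+\tau}\to\nu_\tau^+$ weakly$^*$ for every rational $\tau>0$. Passing to the limit in the two evolution-system identities
\begin{equation*}
\int_{\Rd}G(t_n,t_n-\tau)f\,d\mu_{t_n}=\int_{\Rd}f\,d\mu_{t_n-\tau},\qquad \int_{\Rd}G(t_n+\tau,t_n)f\,d\mu_{t_n+\tau}=\int_{\Rd}f\,d\mu_{t_n},
\end{equation*}
and exploiting that uniformly bounded integrands converging locally uniformly can be integrated to the limit against tight weakly$^*$ convergent families of measures, one obtains $T^*(\tau)\nu=\nu_\tau^-$ and $T^*(\tau)\nu_\tau^+=\nu$ for every $f\in C_b(\Rd)$. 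Hence the (non-empty, weak$^*$ compact) set $\mathcal{L}$ of weak$^*$ limit points of $\{\mu_t\}$ at $+\infty$ is both $T^*(\tau)$-invariant and mapped onto itself by $T^*(\tau)$ for every $\tau>0$.

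The main obstacle is the final identification $\mathcal{L}=\{\mu\}$. The weak$^*$ closed convex hull $\overline{\mathrm{co}}(\mathcal{L})$ is non-empty, weak$^*$ compact, convex, and invariant under the commuting weak$^*$-continuous family $\{T^*(\tau)\}_{\tau>0}$, so the Markov--Kakutani theorem produces a common fixed point which, by uniqueness of the invariant measure, must be $\mu$; this already gives $\mu\in\overline{\mathrm{co}}(\mathcal{L})$. Promoting this to the desired conclusion requires the additional input, available in the present setting via the strong Feller/ergodic properties of the elliptic semigroup $T(t)$ studied in \cite{MPW}, that $T^*(n\tau)\rho\to\mu$ weakly for every initial probability $\rho$. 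Applied along the pre-image sequence $\nu_n\in\mathcal{L}$ with $T^*(n\tau)\nu_n=\nu$ produced by the surjectivity above, this forces $\nu=\mu$ for every $\nu\in\mathcal{L}$, completing the proof.
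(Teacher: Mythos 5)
Your proposal follows the same overall architecture as the paper's proof: Prokhorov tightness, a stability lemma giving $G(\cdot+s_n,s_n)f\to T(\cdot)f$ locally uniformly (your parabolic Schauder/Arzel\`a--Ascoli argument is essentially the paper's Step~1), a diagonal extraction, passage to the limit in the evolution identity to obtain $T^*(k)\nu_k=\nu_0$, and then identification of the limit with $\mu$. However, there are two concrete problems.

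First, the Markov--Kakutani detour buys nothing and is not needed: as you acknowledge, it only shows $\mu\in\overline{\mathrm{co}}(\mathcal{L})$. Likewise, the relation $T^*(\tau)\nu=\nu_\tau^-$ is derived but never used. More importantly, the final identification has a genuine gap. You invoke the statement \emph{``$T^*(n\tau)\rho\to\mu$ weakly for every initial probability $\rho$''} and then apply it to the moving sequence $\nu_n$ satisfying $T^*(n\tau)\nu_n=\nu$. A pointwise-in-$\rho$ convergence statement does not license passing to the limit in $\int T(n\tau)f\,d\nu_n$ when $\nu_n$ itself varies with $n$; one needs the stronger fact that $T(t)f\to\int_{\Rd}f\,d\mu$ \emph{locally uniformly} on $\Rd$, combined with tightness of $\{\nu_n:n\in\N\}$, so that $\int_{\Rd}\bigl(T(n\tau)f-\textstyle\int f\,d\mu\bigr)d\nu_n$ can be split into a ball term controlled by locally uniform convergence and a tail controlled by tightness. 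This is precisely what the paper does: it derives the locally uniform convergence of $T(k)f$ from the gradient estimate \eqref{grad_est_punt} together with the argument of Lemma~\ref{norma_p}, proves tightness of $\{\nu_k\}$ in Step~2, and carries out the ball/tail splitting in Step~4. Your sketch never names the locally uniform convergence of $T(t)f$ (the reference to \cite{MPW} supplies existence and uniqueness of $\mu$, not this convergence), so the last step as written does not close.
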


\begin{proof}
Since the evolution system of measures $\{\mu_t: t\in I\}$ is tight, it admits weak$^*$ limits as $t\to +\infty$.
We will prove that $\mu$ is its only weak$^*$ limit.
For this purpose, let $(s_n)$ and $\nu_0$ be, respectively, a sequence diverging to $+\infty$ and a probability measure such that
\begin{equation}
\lim_{n\to +\infty}\int_{\Rd}f\mu_{s_n}(dx)=\int_{\Rd}f\,\nu_0(dx),\qquad\;\,f\in C_b(\Rd).
\label{mart-6}
\end{equation}
We split the proof into four steps.

{\em Step 1.} Here, we prove that, for any $f\in C^{\infty}_c(\Rd)$, the sequence
$(G(\cdot+s_n,s_n)f)$ converges to $T(\cdot)f$ as $n\to +\infty$, locally uniformly in $[0,+\infty)\times\Rd$.
To this aim, we observe that the function $v_n=G(\cdot+s_n,s_n)f$ belongs to $C_b([0,+\infty)\times\Rd)\cap C^{1,2}([0,+\infty)\times\Rd)$
and solves the Cauchy problem
\begin{eqnarray*}
\left\{
\begin{array}{ll}
D_tv_n(t,x)=\mathcal{A}(t+s_n)v_n(t,x), & (t,x)\in [0,+\infty)\times\Rd,\\[1mm]
v_n(0,x)=f(x), &x\in\Rd.
\end{array}
\right.
\end{eqnarray*}
Due to Hypotheses \ref{hyp2} and to the classical Schauder estimates, for any $r,T>0$, there exists a positive
constant $C$ (which is independent of $n$) such that
\begin{eqnarray*}
\|v_n\|_{C^{1+\alpha/2,2+\alpha}([0,T]\times B(0,r))}\le C\|f\|_{C^{2+\alpha}_b(\Rd)}.
\end{eqnarray*}
Arzel\`a-Ascoli theorem and a diagonal argument show that there exist a function $u\in C_b([0,+\infty)\times\Rd)\cap C^{1+\alpha/2,2+\alpha}_{\rm loc}([0,+\infty)\times\Rd)$ and a subsequence $(s_{n_k})\subset (s_n)$ such that $v_{n_k}\to u$ in $C^{1,2}([0,T]\times B(0,r))$ for any $T,r>0$, and
$D_tu=\mathcal{A}u$ in $[0,+\infty)\times\Rd$. Moreover, $u(0,\cdot)=f$ in $\Rd$ since $u_n(0,\cdot)=f$
for any $n\in\N$.
Hence, $u=T(\cdot)f$. Actually, all the sequence $(G(\cdot+s_n,s_n)f)$ converges locally uniformly in $[0,+\infty)\to
\R^d$ to $T(\cdot)f$. Indeed, the previous argument shows that any subsequence $(s'_n)\subset (s_n)$ admits a subsequence
$(s'_{n_k})$ such that $G(\cdot+s'_{n_k},s'_{n_k})$ converges to $T(\cdot)f$ in $C^{1,2}([0,T]\times B(0,r))$ for
any $r,T>0$, and this is enough to infer that the sequence $(G(\cdot+s_n,s_n))$ converges to $T(\cdot)f$ locally uniformly
in $[0,+\infty)\times\Rd$.

{\em Step 2.} For any $k\in\N$, the system of measures $\{\mu_{k+s_n}: n\in\N\}$ is tight. Hence, by the Prokhorov theorem,
there exist a subsequence $(s_n^k)$ of $(s_n)$ and a probability measure $\nu_k$ such that
\begin{eqnarray*}
\lim_{n\to +\infty}\int_{\Rd}f\mu_{k+s_n^k}(dx)=\int_{\Rd}f\nu_k(dx),\qquad\;\,f\in C_b(\Rd).
\end{eqnarray*}
Using again a diagonal argument, we can extract a subsequence $(t_n)$ of $(s_n)$ such that
\begin{equation}
\lim_{n\to +\infty}\int_{\Rd}f\mu_{k+t_n}(dx)=\int_{\Rd}f\nu_k(dx),\qquad\;\,f\in C_b(\Rd).
\label{mart-sette}
\end{equation}
for any $k\in\N$.

Since $\{\mu_t: t\in I\}$ is tight, the set of measures $\{\nu_k: k\in\N\cup\{0\}\}$ is tight too.
Indeed, fix $\varepsilon>0$ and let $R_0>0$ be such that
$\mu_t(B(0,R_0))\ge 1-\varepsilon$ for any $t\in I$.
Let $\psi\in C_b(\Rd)$ satisfy $\chi_{B(0,R_0)}\le\psi\le\chi_{B(0,R_0+1)}$.
Then,
\begin{eqnarray*}
\int_{\Rd}\psi\mu_{k+s_n}(dx)\ge\int_{B(0,R_0)}\one\mu_{k+s_n}(dx)=\mu_{k+s_n}(B(0,R_0))\ge 1-\varepsilon,
\end{eqnarray*}
for any $k\in\N\cup\{0\}$. Letting $n\to +\infty$ gives
\begin{eqnarray*}
1-\varepsilon\le \int_{\Rd}\psi\nu_k(dx)\le \int_{B(0,R_0+1)}\one\nu_k(dx)
\end{eqnarray*}
and we thus conclude that
$\nu_k(B(0,R_0+1))\ge 1-\varepsilon$ for any $k\in\N$, showing that the set $\{\nu_k: k\in\N\}$ is tight.

{\em Step 3.} Here, we prove that
\begin{equation}
\int_{\Rd}T(k)f\nu_k(dx)=\int_{\Rd}f\nu_0(dx),
\label{inv-quasi}
\end{equation}
for any $f\in C^{\infty}_c(\Rd)$. Using \eqref{invariant-0} and \eqref{mart-6} we get
\begin{equation}
\lim_{n\to +\infty}\int_{\Rd}G(k+t_n,t_n)f\mu_{k+t_n}(dx)
=\lim_{n\to +\infty}\int_{\Rd}f\mu_{t_n}(dx)=\int_{\Rd}f\nu_0(dx),
\label{limite-0}
\end{equation}
for any $k\in\N$.
We claim that the left-hand side of \eqref{limite-0} equals $\int_{\Rd}T(k)f\nu_k(dx)$.
Indeed,
\begin{align*}
&\int_{\Rd}G(k+t_n,t_n)f\mu_{k+t_n}(dx)-
\int_{\Rd}T(k)f\nu_k(dx)\\
=&\int_{\Rd}\left (G(k+t_n,t_n)f-T(k)f\right )\mu_{k+t_n}(dx)\\
&+\left (\int_{\Rd}T(k)f\mu_{k+t_n}(dx)-\int_{\Rd}T(k)f\nu_k(dx)\right )\\
=:&I_{1,n}+I_{2,n}.
\end{align*}
By \eqref{mart-sette} $I_{2,n}$ tends to $0$ as $n\to +\infty$. To prove that also $I_{1,n}$
vanishes as $n\to +\infty$, we fix $\varepsilon>0$ and $R>0$ such that
$\mu_t(\Rd\setminus B(0,R))\le\varepsilon$ for any $t\in I$. Then, we estimate
\begin{align*}
|I_{1,n}|\le & \int_{B(0,R)}|G(k+t_n,t_n)f-T(k)f|\mu_{k+t_n}(dx)\\
&+\int_{\Rd\setminus B(0,R)}|G(k+t_n,t_n)f-T(k)f|\mu_{k+t_n}(dx)\\
\le &\|G(k+t_n,t_n)f-T(k)f\|_{L^{\infty}(B(0,R))}+
2\|f\|_{\infty}\varepsilon,
\end{align*}
since both $G(k+t_n,t_n)f$ and $T(k)f$ are contractions in $C_b(\Rd)$. By Step 1 
\begin{eqnarray*}
\limsup_{n\to +\infty}|I_{1,n}|\le\varepsilon
\end{eqnarray*}
and, since $\varepsilon>0$ is arbitrary, we conclude that $I_{1,n}$ tends to $0$
as $n\to +\infty$.
Formula \eqref{inv-quasi} follows.

{\em Step 4.} Here, using formula \eqref{inv-quasi} we conclude the proof.
We remark that estimate \eqref{grad_est_punt}, with $p=1$, and Step 1 imply
$\|\,|\nabla_x T(t)f|\,\|_{\infty}\le e^{-r_0t}\|\,|\nabla f|\,\|_{\infty}$ for every $t>0$. Arguing
as in the proof of Lemma \ref{norma_p} we obtain that $T(k)f$ converges to $\int_{\Rd}f\mu(dx)$
locally uniformly in $\Rd$, for any $f\in C^{\infty}_c(\Rd)$.

Now, using the same argument of Step 3, we
show that
\begin{equation}
\lim_{k\to +\infty}\int_{\Rd}T(k)f\nu_k(dx)=\int_{\Rd}f\mu(dx).
\label{limite-1}
\end{equation}
Since $\{\nu_k: k\in\N\}$ is a tight system of probability
measures, for any $\varepsilon>0$ there exists $R>0$ such that $\nu_k(\Rd\setminus B(0,R))\le\varepsilon$ for
any $k\in\N$. Hence,
\begin{align*}
\left |\int_{\Rd}T(k)f\nu_k(dx)-\int_{\Rd}f\mu(dx)\right |
\le &\int_{\Rd}\left |T(k)f-\int_{\Rd}f\mu(dx)\right |\nu_k(dx)\\
\le &\int_{B(0,R)}\left |T(k)f-\int_{\Rd}f\mu(dx)\right |\nu_k(dx)\\
&+\int_{\Rd\setminus B(0,R)}\left |T(k)f-\int_{\Rd}f\mu(dx)\right |\nu_k(dx)\\
\le &\left\|T(k)f-\int_{\Rd}f\mu(dx)\right\|_{L^{\infty}(B(0,R))}
+2\|f\|_{\infty}\varepsilon,
\end{align*}
which implies that
\begin{eqnarray*}
\limsup_{k\to +\infty}
\left |\int_{\Rd}T(k)f\nu_k(dx)-\int_{\Rd}f\mu(dx)\right |
\le\varepsilon
\end{eqnarray*}
and \eqref{limite-1} follows.

Formulae \eqref{inv-quasi} and \eqref{limite-1} yield
\begin{eqnarray*}
\int_{\Rd}f\mu(dx)=\int_{\Rd}f\nu_0(dx),\qquad\;\,f\in C^{\infty}_c(\Rd).
\end{eqnarray*}
This shows that $\nu_0=\mu$ and completes the proof.
\end{proof}

\begin{rmk}
{\rm Arguing as in the proof of Theorem \ref{thm-limit}, we can prove that, if $\omega\in \mathfrak{A}_p$, then 
there exists $M>0$ such that 
\begin{eqnarray*}
\left\|T(t)f-\int_{\Rd}f\mu(dx)\right\|_{L^p(\Rd,\mu)}\le Me^{\omega t}\|f\|_{L^p(\Rd,\mu)},\qquad\;\,f\in L^p(\Rd,\mu).
\end{eqnarray*}
}
\end{rmk}

\begin{rmk}
{\rm 
In the case of $T$-time periodic coefficients considered in \cite{LorLunZam10}, the tight evolution system of
measures $\{\mu_t: t\in I\}$ is $T$-periodic, namely $\mu_{t+T}=\mu_t$ for any $t\in\R$, and
$\{\mu_t: t\in I\}$ coincides with the set of its weak$^*$ limit measures. Indeed, if a sequence $\mu_{t_n}$ weakly$^*$ converges to $\mu$ as $t_n\to +\infty$, then setting $s_n=t_n-[t_n/T]T\in [0,T)$ (where $[\,\cdot\,]$ denotes the integer part)
a subsequence $(s_{n_k})$ of $(s_n)$ converges to some $s_0\in [0,T]$. Since the function $s\mapsto\int_{\Rd}f\mu_s(dx)$ is
continuous in $\R$ for every $f\in C_b(\Rd)$, we obtain 
\begin{eqnarray*}
\int_{\Rd}f\mu_{t_{n_k}}(dx)=\int_{\Rd}f\mu_{s_{n_k}}(dx)\to \int_{\Rd}f\mu_{s_0}(dx),
\end{eqnarray*}
as $k\to +\infty$, for every $f\in C_b(\Rd)$. Hence, $\mu=\mu_{s_0}$. Conversely, since each measure $\mu_t$ is the weak$^*$ limit
of the sequence of measure $(\mu_{t+nT})$, the claim follows.
}
\end{rmk}


\section{Comments and examples}
\label{examples}


The key tool of our analysis is estimate \eqref{grad_est_punt}, and of course the existence of a tight evolution system of measures for $G(t,s)$.  With the noteworthy exception of time depending Ornstein-Uhlenbeck operators (see Subsection 6.1), Hypotheses \ref{hyp1}(iii) and \ref{hyp1}(iv) are quite natural conditions that guarantee the validity of estimate \eqref{grad_est_punt} and the existence of a tight evolution system of measures, respectively.

Still assuming that  Hypotheses \ref{hyp1}(i)-(iii) hold,  we could avoid Hypotheses \ref{hyp1}(iv), replacing it by  its consequences

\begin{align*}
&\exists \kappa <0: \;|(\nabla_x G(t,s)f)(x)| \leq e^{\kappa (t-s)}(G(t,s)|\nabla f|)(x),\qquad\;\,
I\ni s<t,\;\,x\in\R^d,\\
&\|\, |\nabla_x G(t,s)f|\,\|_{L^p(\Rd,\mu_t)}\leq c_p(t-s)^{-1/2}\Vert f\Vert_{L^p(\Rd,\mu_s)},\qquad\;\, s< t \le s+1,
\end{align*}
plus some control on $\langle b(t,x), x\rangle$, such as $\langle b(t,x), x\rangle \leq C_{a,b} (1+|x|^2) \varphi(x)$ for $a\leq t\leq b$, $x\in \R^d$, to estimate the integrals
\begin{eqnarray*}
\int_{\R^d} G(t,s)f^p \log (G(t,s)f^p) {\mathcal A}(t)\theta_n \, \mu_t(dx), \quad \int_{\R^d} G(t,s)f^p   {\mathcal A}(t)\theta_n \, \mu_t(dx),
\end{eqnarray*}
that arise in the proof of Theorems \ref{thm_log_sob} and \ref{thm_fin}.

\subsection{Nonautonomous Ornstein-Uhlenbeck operators}
\label{sect-appl}
 Time depending  Orn\-stein-Uhlenbeck operators,
\begin{equation}\label{OU}
(\mathcal{L}(t)\zeta)(x)= \frac{1}{2}\textrm{Tr}(B(t)B^*(t)D^2\zeta(x))
+\langle A(t)x+g(t), \nabla\zeta(x) \rangle,\qquad\;\, t\in\R,\;\, x \in \Rd,
\end{equation}
have been studied in   \cite{GeisLun08,GeisLun09}. In fact, in these papers backward Cauchy problems were considered and the evolution operator was backward, however all the statements may be easily rewritten as statements for forward evolution equations and evolution operators.

The assumptions  are the following:
$B,A:\R\to\mathcal{L}(\Rd)$ and $g:\R \to \Rd$ are continuous and bounded,   $Q(t):= B(t)(B(t))^*/2$ satisfies the uniform ellipticity condition \eqref{elliptic}, and the norm of the evolution operator $U(t,s)$ in $\Rd$  associated  to the equation $u'(t) = -A(t)u(t)$ decays exponentially as $t-s\to -\infty$. More precisely,  for any $s\in\R$,
$U(\cdot,s)$ is the unique solution to the Cauchy problem
\begin{eqnarray*}
\left\{
\begin{array}{ll}
D_tU(t,s)=-A(t)U(t,s), & t\in\R,\\[1mm]
U(s,s)=I,
\end{array}
\right.
\end{eqnarray*}
where $I$ denotes the identity matrix, and the decay estimate is
\begin{equation}\label{espone_U}
\omega_0(U)=\inf\{\omega\in \R: \exists M_\omega\ge 1\,\, \textrm{such that} \,\Vert U(t,s)\Vert\le M_\omega e^{\omega(s-t)},\, t\le s\}<0.
\end{equation}
Then, it is possible to write down explicitly both $G(t,s)$ and all the systems of evolution measures for $G(t,s)$. Among them, the unique tight system of evolution measures consists of Gaussian measures $\mu_t$ given by
\begin{equation}\label{mut}
\mu_t(dx) = (2\pi)^{-\frac{d}{2}}(\det Q_t)^{- \frac{1}{2}}
\,e^{-\frac{1}{2}\langle Q_t^{-1}(x-g_t),x-g_t\rangle}dx,\qquad\;\, t\in\R,
\end{equation}
where
\begin{align*}
Q_t &=\int_t^{+\infty}U(t,\xi)B(\xi)B^*(\xi)U^*(t,\xi)d\xi,\qquad\;\, t\in\R,\\
g_t &=\int_t^{+\infty}U(t,\xi)g(\xi)d\xi,\qquad\;\, t\in\R.
\end{align*}

So, in this case Hypothesis \ref{hyp1}(iii) is not needed, since the tight evolution system of measures is explicit.
Assumption \eqref{espone_U} is not equivalent to our Hypothesis \ref{hyp1}(iv). For instance, taking $d=2$, $A(t) = \left( \begin{array}{cc} -1 & 0\\ 2 & -1\end{array}\right)$, we get easily $\omega_0(U)=-1$, but \eqref{b} is not satisfied by any $r_0<0$.

However,
the proof of Theorem \ref{thm-Ap-Bp} relies on the hypercontractivity estimates,  on the Poincar\'e inequality for $p=2$, that holds for the above Gaussian measures   with constants independent of $t$, and on the gradient estimates \eqref{grad_est_punt} and \eqref{grad_est_norm_p}.

Hypercontractivity and estimates \eqref{grad_est_punt}, \eqref{grad_est_norm_p}  were proved in \cite[Thm. 3.3]{GeisLun09},
\cite[Lemma 3.3]{GeisLun08}. Thus, we
 can   apply Theorem \ref{thm-Ap-Bp} and conclude that $\mathfrak{A}_p=\mathfrak{B}_p$ for any $p>1$.
Again \cite[Lemma 3.3]{GeisLun08} shows that $\mathfrak{B}_p\supset (\omega_0(U),+\infty)$ so that
\begin{equation}
\Vert G(t,s)f-m_s(f)\Vert_{L^p(\Rd,\mu_t)}\leq M_{p,\omega}e^{\omega(t-s)}\Vert f \Vert_{L^p(\Rd,\mu_s)},\qquad\;\,t>s,
\label{decad-O-U}
\end{equation}
for every $\omega \in (\omega_0(U),0)$, $f \in L^p(\Rd,\mu_s)$, $p > 1$ and some positive constant $M_{p,\omega}$.
Estimate \eqref{decad-O-U} improves the decay estimate
obtained for $p=2$ in \cite[Thm. 2.17]{GeisLun09}, and it answers positively to  the conjecture raised in \cite{GeisLun09}.

\subsection{More general nonautonomous operators}
Here we exhibit some classes of nonautonomous elliptic operators that satisfy Hypothesis \ref{hyp1}(iii), since the other ones are easy to be checked.

Let $(\mathcal{A}(t))_{t \in I}$ be defined by \eqref{a(t)}, and assume that its coefficients $q_{ij}$ and $b_i $ satisfy the regularity and ellipticity assumptions of Hypotheses \ref{hyp1}(i)-(ii), and the dissipativity condition in Hypothesis \ref{hyp1}(iv). Moreover, we assume that   there exist three positive constants  $K_{1}$, $R$, $\beta >1$ such that
\begin{equation}\label{ex_1_1}
\langle b(t,x),x\rangle \leq - K_{1}|x|^{\beta},\qquad\;\,t\in I,\;\,x\in\Rd\setminus B(0,R),
\end{equation}
Then   the function
\begin{eqnarray*}
\varphi(x)=e^{\delta|x |^{\beta}}, \quad x\in \R^d,
\end{eqnarray*}
satisfies Hypothesis \ref{hyp1}(iii).
Indeed, for $t\in I$ and $|x| \geq R$ we have
\begin{align*}
(\mathcal{A}(t)\varphi)(x)=&  \delta \beta \varphi(x) \big[ (\delta \beta |x|^{2\beta -4} + (\beta -2) |x|^{\beta -4} )\langle Q(t)x,x\rangle\\
& + \textrm{Tr}(Q(t))|x|^{\beta - 2}  + \langle b(t,x),x\rangle |x|^{\beta - 2}\big]\\
\leq &  \delta \beta \varphi(x) [\delta \beta \Lambda |x|^{2\beta -2} + \Lambda (d+  \beta -2) |x|^{\beta -2} -  K_{1}|x|^{2\beta -2} )]\\
=: & g_1(x) \varphi(x).
\end{align*}
If $\delta$ is small enough (i.e., $\delta<K_1/(\beta\Lambda)$),  then $\lim_{|x|\to + \infty} g_1(x) = -\infty$.
Then there exist   $a$, $c>0$ such that ${\mathcal A}(t)\varphi\le a-c\varphi$
for $t\in I$.

If \eqref{ex_1_1} is replaced by
\begin{equation}\label{ex_1_2}
\exists \overline{y}\in \R^d:\; \sup_{t\in I} |b(t,\overline{y})| =: K_{2} <+\infty,
\end{equation}
then for sufficiently small $\delta >0$  the function
\begin{eqnarray*}
\varphi(x)=e^{\delta|x- \overline{y}|^2}, \qquad\;\, x\in \R^d
\end{eqnarray*}
satisfies Hypothesis  \ref{hyp1}(iii). Indeed, taking into account that by \eqref{monotonia} we have $\langle b(t,x), x-  \overline{y}\rangle \leq
r_0|x-  \overline{y}|^2 + K_{2 }|x-  \overline{y}|$ for $t\in I$ and $x\in \R^d$, we obtain
\begin{align*}
(\mathcal{A}(t)\varphi)(x)=& (4\delta^2\langle Q(t)(x-  \overline{y}),(x-  \overline{y})\rangle + 2\delta\textrm{Tr}(Q(t)) + 2\delta \langle b(t,x),x-  \overline{y}\rangle) \varphi(x)
\\
\leq & (4\delta^2\Lambda |x-  \overline{y}|^2 +   2\delta d\Lambda + 2\delta (r_0|x-  \overline{y}|^2 + K_{2}|x-  \overline{y}|))\varphi(x)
\\
  =: & g_2(x) \varphi(x).
  \end{align*}
If $\delta$ is small enough ($\delta < |r_0|/2\Lambda$), then $\lim_{|x|\to +\infty} g_2(x) = -\infty$.   Then, there exist two positive constants $a$, $c$ such that ${\mathcal A}(t)\varphi\le a-c\varphi$
for $t\in I$.

Note that condition \eqref{ex_1_2}
is satisfied for every $\overline{y}\in \R^d$ in the case of time periodic coefficients considered in \cite{LorLunZam10}.


\begin{thebibliography}{99}


\bibitem{BogKryRoc01}
V.I. Bogachev, N.V. Krylov, M. R\"ockner,
\newblock{\em On regularity of transition probabilities and invariant measures of singular diffusion under minimal conditions},
\newblock{Comm. Partial Differential Equations} {\bf 26} (2001), 2037-2080.

\bibitem{Cerrai}
S. Cerrai,
\newblock{Second order PDE's in finite
and infinite dimensions. A probabilistic approach},
\newblock{Lecture Notes in Mathematics} {\bf 1762}, Springer-Verlag, 2001.



\bibitem{DPR}
G. Da Prato, M. R\"ockner,
\newblock{\em A note on evolution systems of measures for time-dependent stochastic differential equations},
\newblock{Proceedings of the 5th Seminar on Stochastic Analysis, Random Fields and Applications, Ascona 2005},
\newblock{Progr. Probab.} {\bf 59}, Birkh\"auser Verlag, Basel, 2008, 115-122.

\bibitem{daprato-zab}
G. Da Prato, J. Zabczyk,
\newblock{Ergodicity for infinite-dimensional systems},
\newblock{London Math. Soc. Lect. Note Ser.},
Cambridge Univ. Press.



\bibitem{Davies}
E.B. Davies,
\newblock{Heat kernes and spectral theory}.
\newblock{Cambridge tracts in Mathematics} {\bf 92}, Cambridge Univ. Press, Cambridge, 1990.


\bibitem{DS}
J.D. Deuschel, D. Stroock,
\newblock{Large Deviations},  Academic Press, San Diego, 1984.


\bibitem{evans}
L.C. Evans,
\newblock{Partial differential equations}.
\newblock{Graduate Studies in Matemathics} {\bf 19}, Amer. Math. Soc., Providence (RI), 1998.



\bibitem{GeisLun08}
M. Geissert,  A. Lunardi,
\newblock{\em Invariant measures and maximal $L^2$ regularity for
nonautonomous Ornstein-Uhlenbeck equations},
\newblock{J. Lond. Math. Soc. (2)}  {\bf{77}}  (2008), 719-740.

\bibitem{GeisLun09}
M. Geissert,  A. Lunardi,
\newblock{\em Asymptotic behavior and hypercontractivity in nonautonomous
Ornstein-Uhlenbeck equations},
\newblock{J. Lond.\ Math. Soc. (2)} {\bf{79}} (2009),  85-106.

\bibitem{gikhman}
I.I. Gikhman, A.V. Skorohod,
\newblock{Stochastic differential equations}.
Springer-Verlag, New York, 1972.


\bibitem{Gro75Log}
L. Gross,
\newblock{\em Logarithmic Sobolev inequalities},
\newblock{Amer. J. Math.}
{\bf{97}} (1975),  1061-1083.



\bibitem{KunLorLun09Non}
M. Kunze, L. Lorenzi, A. Lunardi,
\newblock {\em Nonautonomous Kolmogorov parabolic equations
with unbounded coefficients},
\newblock {Trans. Amer. Math. Soc.} {\bf{362}} (2010), 169-198.

\bibitem{LadSolUra68Lin}
O.A. Lady$\check{\textrm{z}}$henskaja, V.A. Solonnikov, N.N. Ural'ceva\,
\newblock{\em Linear and quasilinear equations of parabolic type}
\newblock{American Mathematical Society},\, Providence, R.I. 1968.

\bibitem{LorLunZam10}
L. Lorenzi, A. Lunardi, A. Zamboni,
\newblock{\em Asymptotic behavior in
time periodic parabolic problems with unbounded coefficients},
\newblock{J. Differential Equations} {\bf{249}} (2010), 3377-3418.

\bibitem{LorZam09}
L. Lorenzi, A. Zamboni,
\newblock{\em Cores for parabolic operators with unbounded coefficients},
\newblock{J. Differential Equations} {\bf{246}} (2009), 2724-2761.


\bibitem{LMP}
A. Lunardi, G. Metafune,  D. Pallara,
\newblock{\em Dirichlet boundary conditions for elliptic operators with unbounded drift},
\newblock{Proc. Amer. Math. Soc.} {\bf 133} (2005), 2625-2635.

\bibitem{MPP}
G. Metafune, D. Pallara, E. Priola,
\newblock{\em Spectrum of Ornstein-Uhlenbeck operators in $L^p$ spaces with respect to invariant measures},
\newblock{J. Funct. Anal.} {\bf 196} (2002),  40-60.

\bibitem{MPW}
G. Metafune, D. Pallara, M. Wacker,
\newblock {\em Feller semigroups on $\R^N$},
\newblock {Semigroup Forum}, {\bf 65} (2002), 159-205.



\bibitem{abdel}
J. Pr\"uss, A. Rhandi, R. Schnaubelt,
\newblock{\em The domain of elliptic operators on $L^p(\R^d)$ with unbounded
drift coefficients},
\newblock{Houston J. Math.} {\bf{32}} (2006), 563-576.


\bibitem{rothaus}
O. S. Rothaus,
\newblock{\em Logarithmic {S}obolev inequalities and the spectrum of
{S}chr\"odinger operators},
\newblock{J. Funct. Anal.} {\bf{42}} (1981), 110-120.

\bibitem{wang}
F.-Y. Wang,
\newblock{\em A character of the gradient estimate for diffusion semigroups},
\newblock{Proc. Amer. Math. Soc.}  {\bf 133} (2004), 827-834.

\end{thebibliography}
\end{document}